\newtheorem{assumption}[theorem]{Assumption}
\newcounter{saveeqn}%
\newtheorem{defi}[theorem]{Definition}
\newtheorem{remark}[theorem]{Remark}
\numberwithin{equation}{section}
\title{Convergence analysis for a nonlocal gradient descent method via directional Gaussian smoothing\thanks{This manuscript has been authored by UT-Battelle, LLC, under contract DE-AC05-00OR22725
with the US Department of Energy (DOE). The US government retains and the publisher, by accepting the
article for publication, acknowledges that the US government retains a nonexclusive, paid-up, irrevocable,
worldwide license to publish or reproduce the published form of this manuscript, or allow others to do so, for
US government purposes. DOE will provide public access to these results of federally sponsored research in
accordance with the DOE Public Access Plan.}}
\author{Hoang Tran\thanks{Computer Science and Mathematics Division, Oak Ridge National Laboratory,
    Oak Ridge, TN 37831, USA, Email: \textup{\nocorr
      \texttt{tranha@ornl.gov}}}
\and 
Qiang Du\thanks{Department of Applied Physics and Applied Mathematics and Data Science Institute, Columbia University, New York, NY 10027, USA, Email: \textup{\nocorr
      \texttt{qd2125@columbia.edu}}
}
\and 
Guannan Zhang\thanks{Computer Science and Mathematics Division, Oak Ridge National Laboratory,
    Oak Ridge, TN 37831, USA, Email: \textup{\nocorr
      \texttt{zhangg@ornl.gov}. (Corresponding Author).}
}} 
\begin{document}
\maketitle
\renewcommand{\thefootnote}{\fnsymbol{footnote}}

\begin{abstract}
We analyze the convergence of a nonlocal gradient descent method for minimizing a class of high-dimensional non-convex functions, where a directional Gaussian smoothing (DGS) is proposed to define the nonlocal gradient (also referred to as the DGS gradient).
The method was first proposed in \cite{DBLP:conf/uai/ZhangTLZ21}, in which multiple numerical experiments showed that replacing the traditional local gradient with the DGS gradient can help the optimizers escape local minima more easily and significantly improve their performance. However, a rigorous theory for the efficiency of the method on nonconvex landscape is lacking. In this work, we investigate the scenario where the objective function is composed of a convex function, perturbed by a oscillating noise. We provide a convergence theory under which the iterates exponentially converge to a {tightened} neighborhood of the solution, whose size is characterized by the noise wavelength. {We also establish a correlation between the optimal values of the Gaussian smoothing radius and the noise wavelength, thus justify the advantage of using moderate or large smoothing radius with the method.} Furthermore, if the noise level decays to zero when approaching global minimum, we prove that DGS-based optimization converges to the exact global minimum with linear rates, similarly to standard gradient-based method in optimizing convex functions. {Several numerical experiments are provided to confirm our theory and illustrate the superiority of the approach over those based on the local gradient.} 
\end{abstract}

\section{Introduction}
We are interested in solving the optimization problem
\begin{equation}\label{e1}
    \min_{\bm x \in \mathbb{R}^d} \phi(\bm x),
\end{equation}
where $\bm x = (x_1, \ldots, x_d) \in \mathbb{R}^d$ consists of $d$ parameters, and $\phi: \mathbb{R}^d \rightarrow \mathbb{R}$ is a high-dimensional, well-behaved objective function.
However, we assume that neither $\phi(\bm x)$ nor the gradient $\nabla \phi(\bm x)$ is {readily} available, and that the function $\phi(\bm x)$ is only accessible via a noisy approximate $F(\bm x) = \phi(\bm x) + \epsilon(\bm x)$, where $\epsilon$ represents a highly oscillating noise that perturbs the true objective function. The scenario can be found in 
{various}
applications, for instance, machine learning \cite{Real_ICML17,Houthooft18,10.1145/3205455.3205517}, model calibration \cite{Radaideh-NimB} and experimental design \cite{10.1007/978-3-319-99259-4_24,Molesky-nano}, where the loss landscape of objective functions is non-convex and highly 
{rugged and complex}, with its 
{main geometric features}
being concealed under the small-scale, deceptive fluctuations. In such cases, conventional gradient-based algorithms are not 
{effective} because $F$ has many local minima that would trap the optimizers.

One promising strategy for optimizing such noisy functions is using Gaussian smoothing (GS) \cite{FKM05,NesterovSpokoiny15}. GS first smooths 
the loss function with Gaussian convolution and then use the gradient of the smoothed function to guide the optimization. Monte Carlo (MC) sampling is often used to estimate the Gaussian convolution {\cite{SHCS17}}, but this approach suffers from low accuracy and high variance because the GS requires a $d$-dimensional integration. To enhance the MC estimators, several methods have been proposed, 
such as by variance reduction \cite{MWDS18,CRSTW18}, exploiting historical data \cite{Maheswaranathan_GuidedES,Meier_OPTRL_2019}, employing active subspaces \cite{Choromanski_ES-Active}, and searching on latent low-dimensional manifolds \cite{Sener2020Learning}. 
Despite remarkable improvements, this line of works still relies on small smoothing radius to maintain MC accuracy. 
{As a result, their} main focus 
{has been}
essentially 
{on} finding more accurate estimates of the local gradient. {These existing works so far} did not address a fundamental challenge of optimization on 
{complex} non-convex landscapes, 
{namely, how to avoid} 
getting trapped at undesirable local optima. 

In \cite{DBLP:conf/uai/ZhangTLZ21}, a 
{new}
nonlocal gradient was developed for global black-box optimization, using {the} directional Gaussian smoothing (DGS) approach. We refer to such gradient {\em the DGS gradient} hereinafter. {While there have been a great deal of studies on nonlocal gradient operators motivated by many different applications \cite{doi:10.1137/1.9781611975628,doi:10.1142/S0218202512500546}, the}
key idea behind the DGS gradient 
{in the $n$-dimensional space} is to conduct 1D nonlocal explorations along $d$ {mutually} orthogonal directions in $\mathbb{R}^d$, each of which defines a nonlocal directional derivative as a 1D integral. Then, the $d$ directional derivatives are assembled to form the gradient operator. Compared with the traditional GS approach, \cite{DBLP:conf/uai/ZhangTLZ21} demonstrated that DGS method can use large smoothing radius to achieve long-range exploration, because it is much easier {and robust} to approximate the DGS gradient accurately {with limited samplings}. Hence, the DGS gradient provides better search directions than the local gradient {of highly noisy objective functions}, making it particularly suitable for the global optimization.

Yet, the study and demonstration of DGS gradient in \cite{DBLP:conf/uai/ZhangTLZ21} were mainly empirical, and what is currently lacking is a supporting theory to explain the strong performance of DGS approach, especially when using moderate and large smoothing radius. In this paper, we aim to fill this gap by providing a convergence analysis of a gradient descent scheme built on the DGS gradients. We 
{focus on the setting where the} noisy objective function is composed of a convex function perturbed by a deterministic wave-like noise. Generally, an optimizer can only converge to a neighborhood of the solution and not the exact solution in this case. However, our analysis 
show{s} that DGS gradients can significantly tighten 
{such a} neighborhood, because they can efficiently smooth out the small-scale fluctuations {present in} the noise. In particular, we prove {that,} if the smoothing radius is chosen to be approximately the wavelength of the noise functions, the size of the neighborhood of convergence 
also scales linearly with this wavelength. This is optimal in the sense that without any assumption on the evolution of the noise function 
{over} the interval of one wavelength, it is not possible to guarantee the optimizer can break this {so-called} \textit{wavelength limit}. The derived neighborhood of convergence is also much 
smaller compared to the cases where 
small radius is chosen (or local gradients are relied on for search direction), therefore justifying the advantage of using moderate or 
{large} radius for {the} Gaussian smoothing approach. Further, {if the noise perturbation} diminishes quadratically to $0$ when approaching the global minimum, we prove our DGS scheme converges linearly to the \textit{exact} solution with a decreasing sequence of smoothing radius. The result shows that progressively decreasing the smoothing radius after reaching the neighborhood of convergence can help to break 
{the aforementioned wavelength}
limit. Lastly, several numerical experiments are provided to confirm and illustrate our theoretical analysis. 

\textbf{Literature review.} Our method fits into the category of optimization under the presence of noise. These noises can arise in multiple different scenarios. Deterministic noise appears when the objective functions are outputted from the iterative solution of linear systems of equations, where some tolerance thresholds are applied for stopping criteria, or from the adaptation of the grid discretization in partial differential equation solvers. Stochastic noise, on the other hand, appears when the objective function is stochastic, e.g., there are random fluctuations or measurement errors in the data which cannot be replicated. The development and analysis of numerical optimization under the presence of noise are extensive. In \cite{doi:10.1137/110830629}, the convergence rate of gradient descent method was analyzed under a generic sequence of upper bound of gradient noise. Finite-difference quasi-Newton approach for noisy functions was developed in \cite{doi:10.1137/18M1177718}, where the differencing intervals were adjusted according to the level of noise. In \cite{doi:10.1137/19M1240794}, a noise tolerant version of the BFGS method was proposed and analyzed, followed by extensions in \cite{doi:10.1137/20M1373190} to make the method more robust and efficient in practice. Trust region methods with noise was developed and analyzed in \cite{2022arXiv220100973S}. Recently, \cite{doi:10.1080/10556788.2022.2121832} re-examined the finite difference approach and investigated its empirical performance relative to more advanced techniques, and \cite{BCCS19} presented theoretical and numerical comparisons of multiple methods, including finite differences, linear interpolation, Gaussian smoothing, in approximating gradients of noisy functions and convergence rates. In those works, proposed optimization methods, by introducing strategies to extract and/or take advantage of the noise estimation \cite{Mor2011EstimatingCN}, enjoys convergence to a neighborhood of stationary points, whose size is determined by the size of noise. Our present result is similar in this aspect; however, our method is the first to exploit and show convergence up to the \textit{noise wavelengths}, unlike most of the aforementioned works where the noise is assumed to be uniformly bounded (in expectation in case of stochastic noise). Another related approach is implicit filtering \cite{doi:10.1137/S1052623499354096,doi:10.1137/1.9781611971903}, which is designed for the case where noise diminishes as the iterates approach the solution, hence convergence to global minimum is guaranteed. Implicit filtering is based on gradient projection algorithm and applies to noisy optimization problems with bound constraints.   

The noise in optimization can also arise in the scenario that the optimization algorithm itself is a probabilistic process, regardless the objective function is deterministic or stochastic. As an example, the objective function is defined as an expectation across a distribution, which optimization algorithm can only tackle indirectly via a set of sample points. In this case, one can control the accuracy of gradient approximation by the sample size, and adaptive sampling strategies have been introduced with global convergence being guaranteed \cite{doi:10.1137/17M1154679, MWDS18}. Studies of general classes of probabilistic models within trust region and line search frameworks can be found in \cite{doi:10.1137/130915984, CartisScheinberg_MathProgram}  

Perhaps the simplest method that used GS to assist the optimization of noisy or black-box functions is two-point approach. This type of methods randomly generates the next search direction and then estimates the directional derivative with GS for the updates. A theoretical analysis of two-point schemes was presented in the seminal paper \cite{NesterovSpokoiny15}, sharpened and extended in \cite{DBLP:journals/tit/DuchiJWW15,doi:10.1137/120880811,10.5555/3122009.3153008} for non-convex, non-smooth loss functions. A more involved approach is to find the search direction by accumulating multiple directional estimates by two-point schemes. The GS-based evolutionary strategy (ES) \cite{SHCS17,Mania2018SimpleRS, Maheswaranathan_GuidedES,CRSTW18,Choromanski_ES-Active} can be assigned to this category. However, the current works only used GS as a vehicle to approximate \textit{local} gradients, and our paper is the first to exploit and analyze the benefit of moderate and large smoothing radius for \textit{nonlocal} exploration and more efficient optimization.    

Finally, for recent and more thorough reviews on optimization under the presence of noise and stochastic optimization, we refer to \cite{9194022,Larson_et_al_19}.

\textbf{Paper organization.} The paper is organized as follows. In Section \ref{sec:setting}, we present the DGS algorithm, as well as much of the standing notation and conventions used throughout the paper. This is followed by Section \ref{sec:statement}, where we discuss mathematical models to capture the small scale fluctuations of the noise. We establish upper bounds of the DGS gradient on these noise models, which form a critical component in our {subsequent}  analysis. The main results on the linear convergence of DGS method are presented in Section \ref{sec:analysis}. We show that the neighborhood of convergence can be significantly tightened with an optimal choice of {the} smoothing radius, depending on the frequency of the noise functions. A linear convergence to the exact solutions is guaranteed with quadratically diminishing noise. {Section} \ref{sec:experiment} contains the numerical experiments illustrating and confirming our theoretical results. Finally, we provide concluding remarks and outlooks in Section \ref{sec:conclusion}.

\section{The DGS algorithm}\label{sec:setting}
Recall that we are interested in minimizing
\begin{equation*}
    \min_{\bm x \in \mathbb{R}^d} \phi(\bm x),
\end{equation*}
where we can only access a noisy approximation of {the objective function} $\phi$, i.e., $F = \phi + \epsilon$. Using {the} GS technique, we aim to design an efficient smoothing process for $F$ with some Gaussian kernels, with the goal of reducing the noise in $F$ and obtaining good descent directions to guide the optimizer. Throughout this paper, we make some standard assumptions on 
$\phi$. 

\begin{assumption}[Lipschitz continuity of the gradient of $\phi$]
\label{assump:lipschitz_cont}
The function $\phi$ is continuously differentiable, and the gradient of $\phi =\phi(\bm{x})$ is $L$-Lipschitz continuous for all $\bm{x} \in \mathbb{R}^d$, i.e.,  
\begin{align*}
    \|\nabla \phi(\bm x) - \nabla \phi(\bm y) \| \le L \|\bm x - \bm y\|, \ \ \forall \bm x, \bm y\in \mathbb{R}^d. 
\end{align*}
\end{assumption}

\begin{assumption}[Strongly convexity of $\phi$]
\label{assump:strongly_convex}
The function $\phi$ is strongly convex,{ i.e.,}
\begin{align*}
   \phi(\bm y) \ge \phi(\bm x) + \langle \nabla \phi(\bm x),\bm y - \bm x \rangle + \frac{\tau}{2} \|\bm y - \bm x\|^2, \ \ \forall \bm x, \bm y\in \mathbb{R}^d. 
\end{align*}
Here, $\tau\ge 0$ is the convexity parameter. 
\end{assumption}

The standard GS \cite{NesterovSpokoiny15,SHCS17} applies a global smoothing to $F$, where the smoothed loss is defined by
$
      F_{\sigma}(\bm x) = \mathbb{E}_{\bm u \sim \mathcal{N}(0, \mathbf{I}_d)} \left[F(\bm x + \sigma \bm u) \right], 
$
with $\mathcal{N}(0, \mathbf{I}_d)$ being the $d$-dimensional standard Gaussian distribution, and $\sigma > 0$ the smoothing radius. 
{The choice of $\sigma$ signifies the range of the nonlocal smoothing effect.} Although $\nabla F_\sigma(\bm x)$ 
{could be} a great candidate for search directions in multimodal landscapes, this gradient is not easy to realize because it involves a $d$-dimensional integral, 
\begin{equation}\label{e40}
\begin{aligned}
    \nabla F_{\sigma}(\bm x) & = 
    \frac{1}{\sigma}\mathbb{E}_{\bm u \sim \mathcal{N}(0, \mathbf{I}_d)} \left[F(\bm x + \sigma \bm u)\, \bm u\right] {,}
\end{aligned}
\end{equation}
{which is hard to estimate with high accuracy in practice.}
%
%
Most existing works used MC to estimate $\nabla F_\sigma(\bm x)$ in Eq.~\eqref{e40}, 
{which} essentially limited $\sigma$ 
to small values. Since $\lim_{\sigma \rightarrow 0} \nabla F_{\sigma}(\bm x) = \nabla F(\bm x)$, GS could   
just {be viewed as}
another way to estimate the standard gradient when 
{the latter} cannot be accessed directly.  

\subsection{The DGS gradient}\label{sec:grad}
The work \cite{DBLP:conf/uai/ZhangTLZ21} represents a recent effort to utilize GS to derive search directions with nonlocal exploration capability. Rather than smoothing $F$ globally in the function domain, \cite{DBLP:conf/uai/ZhangTLZ21} applies the GS directionally along $d$ orthogonal directions, each of which now defines a nonlocal directional derivative as a 1D integral. To form the DGS gradient thus requires computing $d$ 1D integrals instead of one $d$-dimensional integral, which is much cheaper and can be realized with multiple available tools, such as Gaussian-Hermite quadrature. 

In particular, let us first define a 1D cross section of $F(\bm x)$ as
\begin{equation*}
G(y \,| \,{\bm x, \bm \xi}) = F(\bm x + y\, \bm \xi), \;\; y \in \mathbb{R},
\end{equation*}
where $\bm x$ is the current state of $F(\bm x)$ and $\bm \xi$ is a unit vector in $\mathbb{R}^d$. Note that $\bm x$ and $\bm \xi$ can be viewed as parameters of the function $G$. 
{Next, we define the standard 1D Gaussian kernel function 
\begin{align}
\label{def:Gauss_kernel}
{g_{\sigma}}(v) =  \dfrac{1}{\sigma\sqrt{2\pi}}\exp\left(-\dfrac{v^2}{2\sigma^2}\right) , \ \forall v \in \mathbb{R}. 
\end{align}
We then} define the Gaussian smoothing of $G(y)$, denoted by $G_\sigma(y)$, by
\begin{equation}
\label{eq10}
\begin{aligned}
    G_{\sigma}(y \,| \,{\bm x, \bm \xi}) & = 
   { \int_{\mathbb{R}} G(y + 
    v\, |\, \bm x, \bm \xi)\, {g_{\sigma}}(v) 
    \, dv = } \frac{1}{\sqrt{2\pi}} \int_{\mathbb{R}} G(y + \sigma
    v\, |\, \bm x, \bm \xi)\, {\rm e}^{-\frac{v^2}{2}}
    \, dv\\
    & = \mathbb{E}_{v \sim \mathcal{N}(0, 1)} \left[G(y + \sigma v\, |\, \bm x, \bm \xi) \right],
\end{aligned}
\end{equation}
which is also the Gaussian smoothing of $F(\bm x)$ along the direction $\bm \xi$ in the neighbourhood of $\bm x$. 
The derivative of $G_{\sigma}(y|\bm x,\bm \xi)$ at $y = 0$ can be represented by a 1D expectation
\begin{equation}\label{e4}
    \mathscr{D}[G_{\sigma}(0 \,|\, \bm x, \bm \xi)] 
     = \frac{1}{\sigma}\,\mathbb{E}_{v \sim \mathcal{N}(0,1)} \left[G(\sigma v \, | \, \bm x, \bm \xi)\, v\right],
\end{equation}
where $\mathscr{D}[\cdot]$ denotes the differential operator. As Eq.~\eqref{e4} is a 1D integral, it is easier to conduct long-range exploration with large values of the smoothing radius $\sigma$.

To form the DGS gradient, given a matrix $\bm \Xi := (\bm \xi_1, \ldots, \bm \xi_d)$ consisting of 
$d$ orthonormal vectors, we define $d$ directional derivatives like Eq.~\eqref{e4} and assemble them as 
%
%
\begin{equation}\label{dev_smooth_func}
    {\nabla}_{\sigma, \bm \Xi}[F](\bm x) = \bm \Xi^{\top}
\begin{bmatrix}
{\mathscr{D}}\left[G_{\sigma}(0 \, |\, \bm x, \bm \xi_1)\right]\\
\vdots \\
{\mathscr{D}}\left[G_{\sigma}(0\, |\, \bm x, \bm \xi_d) \right]
\end{bmatrix}.
\end{equation}
%




\subsection{The Gauss-Hermite quadrature estimator}\label{sec:ada_DGS-ES}
It remains to find an accurate estimator for the DGS gradient in Eq. \eqref{dev_smooth_func}.
%
Since each component of ${\nabla}_{\sigma, \bm \Xi}[F](\bm x)$ is a 1D integral, the GH quadrature rule \cite{Handbook} is perfectly suitable for approximating the integrals with high accuracy. Specifically, the GH rule can be directly used to obtain the following estimator for $\mathscr{D}[G_{\sigma}(0 \,|\, \bm x, \bm \xi)]$, i.e.,
\begin{equation}\label{e8}
\begin{aligned}
  \widetilde{\mathscr{D}}^M[G_\sigma(0 \, | \, \bm x, \bm \xi)] 
     =  \frac{1}{\sqrt{\pi}\sigma} \sum_{m = 1}^M w_m \,F(\bm x + \sqrt{2}\sigma v_m \bm \xi)\sqrt{2}v_m, 
\end{aligned}
\end{equation}
where $\{v_m\}_{m=1}^M$ are the roots of the $M$-th order Hermite polynomial
 and $\{w_m\}_{m=1}^M$ are quadrature weights.  
It was theoretically proved in \cite{Handbook} that the error of the estimator in Eq.~\eqref{e8} is 
\begin{align*}
\hspace{-0.1cm}\Big|(\widetilde{\mathscr{D}}^M- \mathscr{D})[G_\sigma] \Big| \sim \frac{M\,!\sqrt{\pi}}{2^M(2M)\,!}, 
\end{align*}
where $M!$ is the factorial of $M$. In comparison, the error of an MC estimator is on the order of $1/\sqrt{M}$.
Applying the GH quadrature to each component of ${\nabla}_{\sigma, \bm \Xi}[F](\bm x)$ in Eq.~\eqref{dev_smooth_func}, we obtain the final estimator for the DGS gradient: 
%
\begin{equation}\label{e5}
    \widetilde{\nabla}^M_{\sigma, \bm \Xi}[F](\bm x) := \bm \Xi^{\top}
\begin{bmatrix}
\widetilde{\mathscr{D}}^M\left[G_{\sigma}(0 \, |\, \bm x, \bm \xi_1)\right]\\
\vdots \\
\widetilde{\mathscr{D}}^M\left[G_{\sigma}(0\, |\, \bm x, \bm \xi_d) \right]
\end{bmatrix},
\end{equation}
which requires a total of $M\times d$ function evaluations. 

\subsection{Gradient descent scheme with the DGS gradient} Once formed, the DGS operator \eqref{e5} can be fed into any gradient-based schemes in place of the local gradient to navigate the optimizers through noisy, oscillating landscapes. This paper is concerned with the analysis of the simple gradient descent scheme, where DGS gradient replaces the standard gradient as
 \begin{equation}
 \label{alg:GD}
\bm x_{t+1} = \bm x_t - \lambda \widetilde{\nabla}^M_{\sigma_t, \bm \Xi}[F](\bm x_t).
\end{equation}
Here, $\bm x_t$ and ${\bm x_{t+1}}$ are the candidate solutions at iterations $t$ and $t+1$, $\lambda$ is the step size and $\sigma_t$ is the smoothing radius. Note that we consider fixed step size, but allow varying smoothing radius in some scenarios studied next. 

\section{The noise models and bounds on the DGS gradient of the noise}
\label{sec:statement}
In this section, we 
discuss some {representative} models 
{of} the noise function, 
and establish useful bounds on the DGS gradients 
for each case. These bounds 
{are} critical for our algorithm analysis in the next section. 

 Define the 1D cross section of {the noise function} $\epsilon(\bm x)$ as 
\begin{align}
\label{def:eta}
\eta(y|\bm x, \bm \xi ) = \epsilon(\bm x + y\bm \xi), \  y\in \mathbb{R}. 
\end{align}
$\bm x$ and $\bm \xi$ can be viewed as parameters of the function $\eta$, and {for simplicity} we also refer to $\eta(\cdot | \bm x, \bm \xi)$ as 
$\eta$ if no confusion arises. 
{Similar to the definitions given in the} Section \ref{sec:grad}, the 1D Gaussian smoothing of $\epsilon(\bm x)$ along the direction $\bm \xi$ with {the} smoothing radius $\sigma$ is represented by
\begin{equation}
\label{def:eta_s}
\begin{aligned}
    \eta_{\sigma}(y \,| \,{\bm x, \bm \xi}) & = 
     { \int_{\mathbb{R}} \eta(y + 
    v\, |\, \bm x, \bm \xi)\, {g_{\sigma}}(v) 
    \, dv = } \frac{1}{\sqrt{2\pi}} \int_{\mathbb{R}} \eta(y + \sigma v\, |\, \bm x, \bm \xi)\, {\rm e}^{-\frac{v^2}{2}}\, dv\\
    & = \mathbb{E}_{v \sim \mathcal{N}(0, 1)} \left[\eta(y + \sigma v\, |\, \bm x, \bm \xi) \right],
\end{aligned}
\end{equation}
and we have the DGS gradient of $\epsilon$ at $\bm x$
\begin{equation}
    {\nabla}_{\sigma, \bm \Xi}[\epsilon](\bm x) = \bm \Xi^{\top}
\begin{bmatrix}
{\mathscr{D}}\left[\eta_{\sigma}(0 \, |\, \bm x, \bm \xi_1)\right]\\
\vdots \\
{\mathscr{D}}\left[\eta_{\sigma}(0\, |\, \bm x, \bm \xi_d) \right]
\end{bmatrix}.
\end{equation}
{With the Gaussian kernel function $g_{\sigma}$ given in \eqref{def:Gauss_kernel} and its derivative}
${g_{\sigma}'}(v) = -{\frac{1}{\sigma^2} {g_{\sigma}}(v)v} 
$, 
we have that  
\begin{gather}
\label{eq:8}
\begin{aligned}
\eta_\sigma(0|\bm x,\bm \xi) & = \mathbb{E}_{v \sim \mathcal{N}(0, 1)} \left[\eta( \sigma v\, |\, \bm x, \bm \xi) \right] =  \frac{1}{\sqrt{2\pi}} \int_{-\infty}^{\infty}\eta( \sigma v\, |\, \bm x, \bm \xi) \, {\rm e}^{-\frac{v^2}{2}}\, dv .
\\
&  = \frac{1}{\sigma \sqrt{ 2\pi}} \int_{-\infty}^{\infty} \eta(- v\, |\, \bm x, \bm \xi)\, {\rm e}^{-\frac{v^2}{2\sigma^2}}\, dv = ({g_{\sigma}} * \eta) (0)
\\
{\mathscr{D}}\left[\eta_{\sigma}(0 \, |\, \bm x, \bm \xi)\right] &  = - \frac{1}{\sigma^3\sqrt{ 2\pi}} \int_{-\infty}^{\infty} \eta(- v\, |\, \bm x, \bm \xi) \, {\rm e}^{-\frac{v^2}{2\sigma^2}}
\, v dv =  ({g_{\sigma}'} * \eta) (0 ).
\end{aligned}
\end{gather}
Here, $*$ is the convolution operator. 

Let $\bm x^*$ be the global minimum of $\phi$. We are concerned with two different scenarios for {the noisy} $\epsilon$. To describe noises characterized by 
fluctuations {on small scales}, {in the first model}, we define wave-like noise models, including the high-frequency bandlimited (see definition below) and the simplified, yet more explanatory periodic models. The second class of noise considered here is diminishing noise, which is for demonstrating that the DGS optimization can converge to global minimum in such case.   
\begin{enumerate}[(i)]
\item \textit{Wave-like noise:}
\begin{itemize}
\item \textit{periodic:} cross-sections of $\epsilon$ along $\bm \xi_1,\ldots, \bm \xi_d$ are periodic, 
\item \textit{high-frequency bandlimited:} the power spectra of cross-sections of $\epsilon$ along $\bm \xi_1,\ldots, \bm \xi_d$ possess a uniform positive lower bound,  
\end{itemize}
\item \textit{Diminishing noise:} $\epsilon(\bm x)$ is quadratically diminishing as $\bm x$ approaches $\bm x^*$. 
\end{enumerate}
 {As the initial attempt to carry out the analysis on the DGS scheme, it is interesting to focus on these two models in this work as the starting point, since they can be representative to noises often encountered in practice: in the first case, we may view the noise $\epsilon$ being additive, with its magnitude unrelated to $\phi$; while in the second case, the noise is multiplicative with its magnitude related to $\phi$. Indeed, one may see the latter case being fluctuations on a relative scale.}
Figure \ref{fig:noise_model} 
{offers} an illustration of $\epsilon$ in these two scenarios. We proceed to estimate ${\nabla}^M_{ \sigma, \bm \Xi}[\epsilon](\bm x)$, which is the DGS gradient of the noise $\epsilon$. Since ${\nabla}^M_{ \sigma, \bm \Xi}[\epsilon](\bm x)$ represents a perturbation to the search directions informed by true objective function $\phi$, it is desirable that this quantity to be small. 

\begin{figure}[h]
\vspace{-.1cm}
\includegraphics[width = 1.6in]{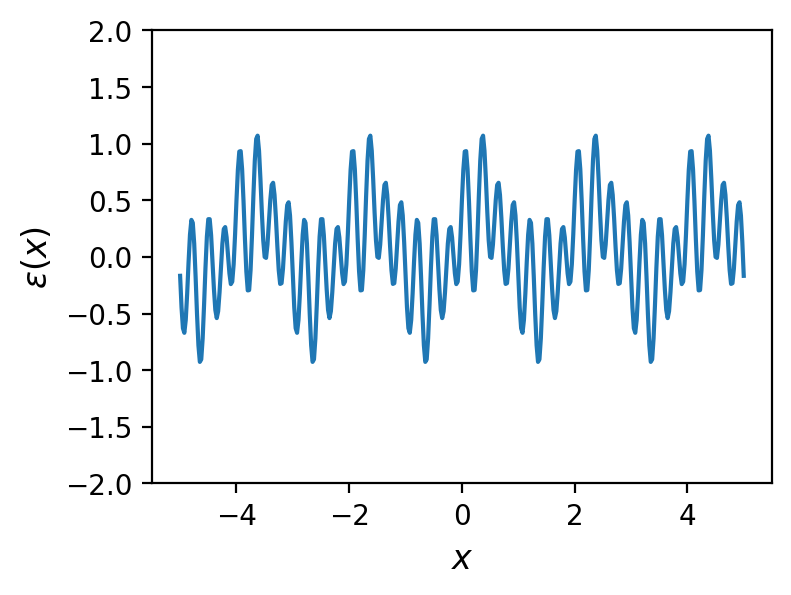}
\includegraphics[width = 1.6in ]{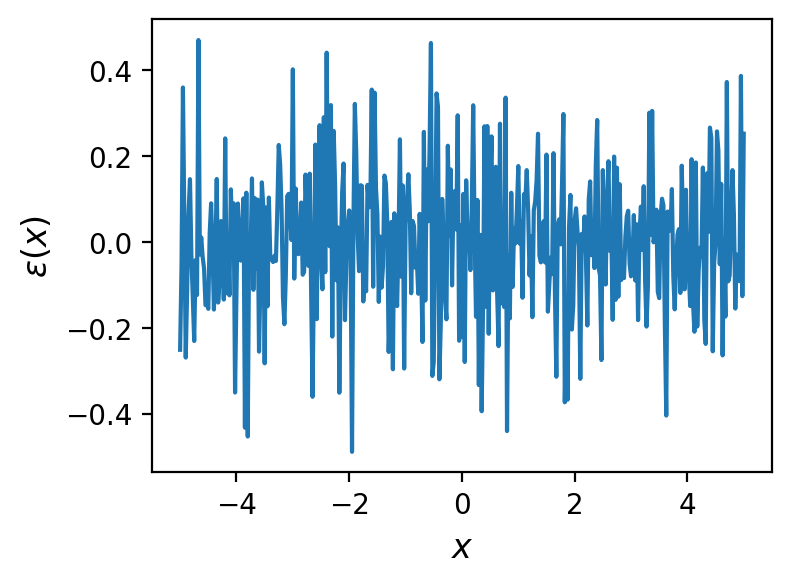}
\includegraphics[width = 1.6in]{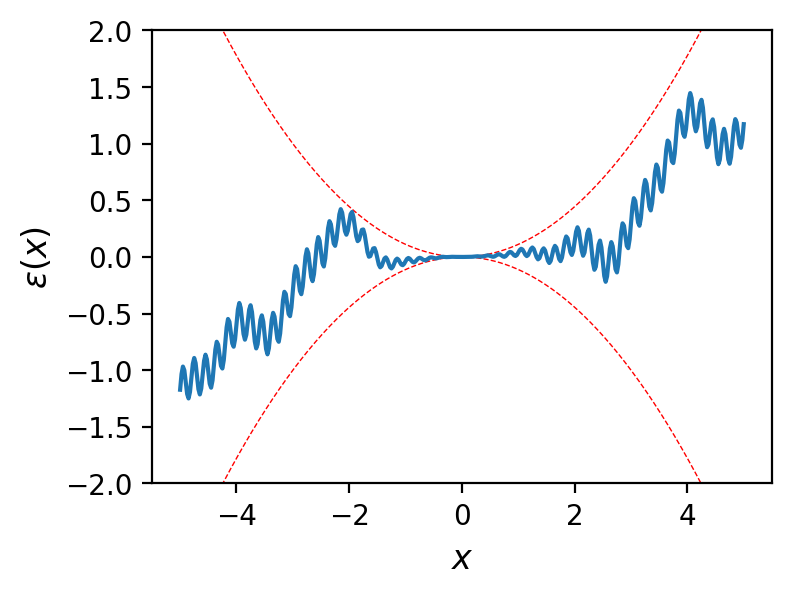}
\vspace{-.1in}
\caption{Illustrations of the noise models for $\epsilon$ considered in this study. From left to right: wave-like (periodic), wave-like (high-frequency bandlimited), and diminishing noise.}
\label{fig:noise_model}
\vspace{-.2in}
\end{figure}


\subsection{Wave-like noise}
First, we analyze the periodic noise. This simple model helps to highlight the connection between the optimal smoothing radius and the frequency of the noise functions.
\begin{proposition}[Periodic noise]
\label{prop:periodic}
Let $\epsilon$ be a real function on $\mathbb{R}^d$ 
{and its}
cross sections $\eta(\cdot|\bm x, \bm \xi_i)$ 
along $\bm \xi_1,\ldots, \bm \xi_d$  
{be} periodic functions with {the} period $1/\alpha$. Assume that for some $n\in \mathbb{N}$, $\eta(\cdot|\bm x,\bm\xi_i)$ are 
continuously differentiable {of order $n$} with uniformly bounded $n$-th derivatives, i.e.,  $|{\eta^{(n)}}(y|\bm x,\bm \xi_i)|\, \le 
{\gamma_n<\infty},\ \forall y\in \mathbb{R},\ i\in \{1,\ldots,d\}$. Then 
\begin{gather*}
\begin{aligned}
\|{\nabla}_{\sigma, \bm \Xi}[\epsilon](\bm x)\| & \le 
\begin{cases}
\displaystyle 
\frac{\gamma_1\sqrt{d}}{2} \exp(- 2\pi^2\alpha^2 \sigma^2) \left({1} + \frac{1}{2\alpha\sigma\sqrt{2\pi}}\right), & \text{if}\ n = 1,
\\
\displaystyle
\frac{ {\gamma_2}\sqrt{d}}{4\pi\alpha} \exp(- 2\pi^2\alpha^2 \sigma^2)\left(1 + \frac{1}{2}\log\left(1+\frac{1}{2\pi^2\alpha^2\sigma^2}\right)\right), &  \text{if}\ n = 2,
\\
\displaystyle
 \frac{\gamma_n \sqrt{d}}{2 (2\pi  \alpha  )^{n-1}} \exp(- 2\pi^2\alpha^2 \sigma^2) \left( 1 + \frac{1}{4\pi^2 \alpha^2\sigma^2}\right),&  \text{if}\ n > 2.
\end{cases}
\end{aligned}
\end{gather*}
\end{proposition}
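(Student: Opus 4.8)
The plan is to reduce everything to one-dimensional estimates on the directional derivative $\mathscr{D}[\eta_\sigma(0|\bm x,\bm\xi_i)] = (g_\sigma' * \eta)(0)$, as given in \eqref{eq:8}, and then assemble the $d$ components. Since $\bm\Xi$ is orthonormal, $\|{\nabla}_{\sigma,\bm\Xi}[\epsilon](\bm x)\|^2 = \sum_{i=1}^d |\mathscr{D}[\eta_\sigma(0|\bm x,\bm\xi_i)]|^2$, so it suffices to bound each $|(g_\sigma'*\eta)(0)|$ by the stated scalar expression (without the $\sqrt d$) and then multiply by $\sqrt d$. From here on fix one direction and write $\eta$ for the corresponding cross-section, a $1/\alpha$-periodic $C^n$ function with $\|\eta^{(n)}\|_\infty \le \gamma_n$.

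The core idea is to pass to Fourier series. Because $\eta$ is periodic with period $1/\alpha$, write $\eta(y) = \sum_{k\in\mathbb{Z}} c_k e^{2\pi i k \alpha y}$. Convolution against the Gaussian kernel multiplies the $k$-th mode by the Fourier transform of $g_\sigma$ evaluated at frequency $k\alpha$, i.e.\ by $e^{-2\pi^2 k^2\alpha^2\sigma^2}$; differentiation brings down a factor $2\pi i k\alpha$. Hence
\[
(g_\sigma'*\eta)(0) \;=\; \sum_{k\in\mathbb{Z}} c_k\,(2\pi i k\alpha)\,e^{-2\pi^2 k^2\alpha^2\sigma^2},
\]
so that $|(g_\sigma'*\eta)(0)| \le 2\pi\alpha \sum_{k\ge 1} 2k|c_k|\, e^{-2\pi^2 k^2\alpha^2\sigma^2}$. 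The smoothness hypothesis enters through decay of the coefficients: integrating by parts $n$ times gives $|c_k| \le \gamma_n/(2\pi |k|\alpha)^n$ for $k\ne 0$. Substituting, the $k=\pm1$ terms dominate and produce the prefactor $\gamma_n/(2\pi\alpha)^{n-1}$ and the leading exponential $e^{-2\pi^2\alpha^2\sigma^2}$; the remaining sum $\sum_{k\ge 2} k^{1-n} e^{-2\pi^2 k^2\alpha^2\sigma^2}$ is the source of the case-dependent correction factors. I expect this tail sum to be handled by comparison with an integral: for $n>2$ one bounds $\sum_{k\ge 2} k^{1-n} e^{-2\pi^2 k^2\alpha^2\sigma^2} \le e^{-2\pi^2\alpha^2\sigma^2}\cdot\frac{1}{4\pi^2\alpha^2\sigma^2}$ type estimate by pulling out one factor $e^{-2\pi^2(k^2-1)\alpha^2\sigma^2}$ and dominating the rest; for $n=2$ the harmonic-type sum $\sum k^{-1} e^{-2\pi^2 k^2\alpha^2\sigma^2}$ yields the logarithm via $\int_1^\infty t^{-1} e^{-ct^2}\,dt \sim \tfrac12\log(1+1/c)$; for $n=1$ there is no decay in $k$ at all beyond the Gaussian, so one compares $\sum_{k\ge 1} 2k\,e^{-2\pi^2 k^2\alpha^2\sigma^2}$ directly with $\int_0^\infty 2t\, e^{-2\pi^2\alpha^2\sigma^2 t^2}\,dt = 1/(2\pi^2\alpha^2\sigma^2)$, after separating the $k=1$ term, giving the $1 + \frac{1}{2\alpha\sigma\sqrt{2\pi}}$ shape.

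The main obstacle, and the place requiring the most care, is getting the constants in the correction factors \emph{exactly} as stated rather than merely up to an absolute constant — in particular matching $\frac12(1+\tfrac12\log(1+\frac{1}{2\pi^2\alpha^2\sigma^2}))$ for $n=2$ and the precise coefficient $\frac{1}{4\pi^2\alpha^2\sigma^2}$ for $n>2$. This means the integral comparisons above must be done as genuine monotonicity/convexity estimates (e.g.\ bounding a sum by an integral plus its first term, using that $t\mapsto t^{1-n}e^{-ct^2}$ is eventually decreasing) with the endpoints chosen to reproduce the claimed closed forms, and one must also justify interchanging the Fourier series with the convolution integral (uniform convergence, which holds once $n\ge1$). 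A secondary subtlety is that the hypothesis is only $|\eta^{(n)}|\le\gamma_n$ for the single given $n$, not for all lower orders, so the Fourier-coefficient bound must be applied at exactly that $n$ and the three cases treated genuinely separately, as the proposition does.
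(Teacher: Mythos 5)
Your proposal follows essentially the same route as the paper's proof: expand the periodic cross-section in a Fourier series (the paper uses the sine series of the odd part, you use complex exponentials), observe that convolution with $g_\sigma'$ multiplies the $k$-th mode by $2\pi i k\alpha\, e^{-2\pi^2 k^2\alpha^2\sigma^2}$, obtain coefficient decay $|c_k|\lesssim \gamma_n/(2\pi k\alpha)^n$ by integrating by parts $n$ times, split off the dominant $k=1$ term, and bound the tail sum by an integral using monotonicity of $v\mapsto v^{1-n}e^{-\sigma^2v^2/2}$, with the three cases $n=1,2,>2$ handled exactly as you describe (erfc, logarithmic integral, and a further integration by parts, respectively). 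The only caveat is the one you already flag: the precise constants depend on the Fourier normalization, so care is needed there, but the structure of the argument is identical to the paper's.
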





\begin{proof}
Note that $\|{\nabla}_{ \sigma, \bm \Xi}[\epsilon](\bm x)\|^2 = 
\sum\limits_{i=1}^d |{\mathscr{D}}\left[\eta_{\sigma}(0 \, |\, \bm x, \bm \xi_i)\right] |^2 
$, we will first estimate 
{the components} of ${\nabla}_{ \sigma, \bm \Xi}[\epsilon](\bm x)$. It is easy to see from \eqref{eq:8} that ${\mathscr{D}}\left[\eta_{\sigma}(0 \, |\, \bm x, \bm \xi_i)\right] $ is completely determined by the odd part of $\eta$, so without loss of generality, we can assume $\eta$ is an odd function.  {For each $i$,} since $\eta(y | \bm x, \bm \xi_i)$ is a periodic function
in $y$ and 
{has} continuous bounded derivatives, 
{it} can be represented as  
\begin{align}
\label{new_eq:10}
\eta(y | \bm x, \bm \xi_i) =
\sum_{k=1}^{\infty} b_k \sin\left({2\pi\alpha k y}{}\right), 
\end{align}
where from integration by part
\begin{gather}
\label{new_eq:11}
\begin{aligned}
|b_k|  & 
=\left|
\frac{\alpha}{2}  \int_{0}^{1/\alpha} \eta(v |\bm x,  \bm \xi_i) \sin\left({2\pi\alpha k v}\right) dv
\right|\\
&
\le 
\begin{cases} \displaystyle
\dfrac{\alpha}{2(2\pi\alpha k)^n} \int\limits_{0}^{1/\alpha} |\eta^{(n)}(v |\bm x,  \bm \xi_i)\sin\left({2\pi\alpha k v}\right) |dv, \ \ \text{ if } n \text{ is even }
\\  \displaystyle
\dfrac{\alpha}{2(2\pi\alpha k)^n} \int\limits_{0}^{1/\alpha} |\eta^{(n)}(v |\bm x,  \bm \xi_i)\cos\left({2\pi\alpha k v}\right) | dv, \ \ \text{ if } n \text{ is odd }
\end{cases}
\\
& \le 
\dfrac{\gamma_n}{2(2\pi\alpha k)^n}, \quad \forall k\geq 1.
\end{aligned}
\end{gather}

From \eqref{new_eq:10}, it gives
\begin{align*} 
 {\mathscr{D}}\left[\eta_{\sigma}(0 \, |\, \bm x, \bm \xi_i)\right]  & =  {(g_{\sigma}' * \eta) (0 )}  = - \frac{1}{\sigma^3 \sqrt{ 2\pi}} \int_{-\infty}^{\infty} \eta(- v\, |\, \bm x, \bm \xi_i) \, {\rm e}^{-\frac{v^2}{2\sigma^2}}\, v dv 
 \\
& =
\sum_{k=1}^{\infty} \dfrac{b_k}{\sigma^3 \sqrt{2\pi}}\int_{-\infty}^{\infty} v \exp\left(-\frac{v^2}{2\sigma^2}\right) \sin\left({2\pi\alpha k v}\right) dv.  
\end{align*}
Standard calculation yields 
$$
\int_{-\infty}^{\infty} v \exp\left(-\frac{v^2}{2\sigma^2}\right) \sin\left({2\pi\alpha k v}\right) dv  = \frac{2\pi \alpha k \sigma^3 \sqrt{2\pi} }{\exp(2\pi^2\alpha^2 k^2 \sigma^2)}. 
$$
Combining with \eqref{new_eq:11}, we have 
\begin{align*}
{\mathscr{D}}\left[\eta_{\sigma}(0 \, |\, \bm x, \bm \xi_i)\right]   & = \sum_{k=1}^{\infty}  \dfrac{b_k}{\sigma^3 \sqrt{2\pi}} \cdot \frac{2\pi \alpha k \sigma^3 \sqrt{2\pi} }{\exp(2\pi^2\alpha^2 k^2 \sigma^2)} =  \sum_{k=1}^{\infty} \frac{2\pi  \alpha k b_k }{\exp(2\pi^2\alpha^2 k^2 \sigma^2)}
\\
& \le \frac{\gamma_n}{2} \sum_{k=1}^{\infty} (2\pi  \alpha k )^{-n+1}\exp(- 2\pi^2\alpha^2 k^2 \sigma^2). 
\end{align*}
Observe that 
\begin{align*}
& \frac{\gamma_n}{2} \sum_{k=1}^{\infty} (2\pi  \alpha k )^{-n+1}\exp(- 2\pi^2\alpha^2 k^2 \sigma^2)
\\
 \le \, & \frac{\gamma_n}{2} (2\pi  \alpha  )^{-n+1}\exp(- 2\pi^2\alpha^2 \sigma^2) + \frac{\gamma_n}{4\pi\alpha}\int_{2\pi\alpha}^{\infty} v^{-n+1}\exp\left(-  \frac{\sigma^2 v^2}{2}\right)dv {,} 
\end{align*}
since {the function}
$v\mapsto v^{-n+1}\exp(- \sigma^2 v^2/2) $ is decreasing {in $v$} on $(0,\infty)$. 

Therefore, if $n=1$, we have 
\begin{align*}
& {\mathscr{D}}\left[\eta_{\sigma}(0 \, |\, \bm x, \bm \xi_i)\right] \le \,  \frac{\gamma_n}{2} \exp(- 2\pi^2\alpha^2 \sigma^2) + \frac{\gamma_n}{4\pi\alpha}\int_{2\pi\alpha}^{\infty} \exp\left(-  \frac{\sigma^2 v^2}{2}\right)dv
\\
&\qquad = \frac{\gamma_n}{2} \exp(- 2\pi^2\alpha^2 \sigma^2) + \frac{\gamma_n}{4\pi\alpha\sigma} \sqrt{\frac{\pi}{2}} \text{erfc}\left({\pi\alpha\sigma}{\sqrt{2}}\right)\\
& \qquad \le  \left(\frac{\gamma_n}{2} + \frac{\gamma_n}{4\alpha\sigma\sqrt{2\pi}}\right)\exp(- 2\pi^2\alpha^2 \sigma^2). 
\end{align*}
Here, erfc denotes the complementary error function. On the other hand, for $n=2$, there follows 
\begin{align*}
& {\mathscr{D}}\left[\eta_{\sigma}(0 \, |\, \bm x, \bm \xi_i)\right] \le \,  \frac{\gamma_n}{4\pi\alpha} \exp(- 2\pi^2\alpha^2 \sigma^2) + \frac{\gamma_n}{4\pi\alpha}\int_{2\pi\alpha}^{\infty} v^{-1} \exp\left(-  \frac{\sigma^2 v^2}{2}\right)dv
\\
& \qquad \le  \frac{\gamma_n}{4\pi\alpha} \exp(- 2\pi^2\alpha^2 \sigma^2)\left(1 + \frac{1}{2}\log\left(1+\frac{1}{2\pi^2\alpha^2\sigma^2}\right)\right), 
\end{align*}
where the second estimate is due to \cite[Section 5.1.20]{Handbook}. 

The case for general $n>2$ are treated by a more relaxed estimate. From integration by part, 
\begin{gather*}
\begin{aligned}
&\int_{2\pi\alpha}^{\infty} v^{-n+1}\exp\left(-  \frac{\sigma^2 v^2}{2}\right)dv 
 = \frac{1}{\sigma^2} (2\pi\alpha)^{-n} \exp(-2\pi^2 \alpha^2 \sigma^2) 
 \\
 & \qquad - \frac{n}{\sigma^2} \int_{2\pi\alpha}^{\infty} v^{-n-1}\exp\left(-  \frac{\sigma^2 v^2}{2}\right)dv \le  \frac{1}{\sigma^2} (2\pi\alpha)^{-n} \exp(-2\pi^2 \alpha^2 \sigma^2).  
\end{aligned}
\end{gather*}
We have
\begin{gather*}
\begin{aligned}
{\mathscr{D}}\left[\eta_{\sigma}(0 \, |\, \bm x, \bm \xi_i)\right] & \le  \frac{\gamma_n}{2} (2\pi  \alpha  )^{-n+1}\exp(- 2\pi^2\alpha^2 \sigma^2) +  \frac{\gamma_n}{4\pi\alpha \sigma^2} (2\pi\alpha)^{-n} \exp(-2\pi^2 \alpha^2 \sigma^2)
\\
& = \frac{\gamma_n}{2 (2\pi  \alpha  )^{n-1}} \exp(- 2\pi^2\alpha^2 \sigma^2) \left( 1 + \frac{1}{4\pi^2 \alpha^2\sigma^2}\right).
\end{aligned}
\end{gather*}

Summing the above estimates on $\{\bm \xi_1,\ldots, \bm\xi_d\}$ yields the conclusion.
\end{proof}

\begin{remark}
Proposition \ref{prop:periodic} establishes a general estimate of $\|{\nabla}_{\sigma, \bm \Xi}[\epsilon](\bm x)\| $ for $n$ times differentiable  noise. The variation of this estimate on $n$ reflects via a multiplier of $ \dfrac{\gamma_n}{2 (2\pi  \alpha  )^{n-1}}$. While the denominator increases exponentially in $n$, the bound of the n-th derivative $\gamma_n$ may also increase and counterbalance this advantage. Thus, in the upcoming analysis, we only focus on the case $n=1$ for simplicity. 
\end{remark}

Next, we consider the high-frequency bandlimited model for $\epsilon$, which is more general {than the scenario considered in the above}. Some additional definition is {given below to make the description more precise}.
Given a continuous and absolutely integrable 1D function $f$, it is known that the Fourier transform of $f$ exists on $\mathbb{R}$. We represent the Fourier transform of $f$ by $\mathcal{F}[f]$ or $\widehat{f}$, while the inverse Fourier transform is denoted by $\mathcal{F}^{-1}[f]$: 
\begin{align*}
\widehat{f}(u) = \mathcal{F}[f](u) & = \int_{-\infty}^{\infty} f(y) e^{-i 2\pi uy}dy, \ \forall u\in \mathbb{R},
\\
\mathcal{F}^{-1}[f](y) & = \int_{-\infty}^{\infty} f(u) e^{i 2\pi uy}du, \ \forall y\in \mathbb{R}.
\end{align*}
 
\begin{defi}
\label{def:bandlimited}
Let $f:\mathbb{R}\to \mathbb{R}$ be a continuous function such that $\int_{-\infty}^{\infty} |f(u)|du$ $< \infty$.  We call $f$ a high-frequency bandlimited signal if its power spectrum under Fourier transform is uniformly bounded in the frequency domain and zero below a threshold. In other words, there exist $\alpha_0$ and $\gamma>0$ such that:  
$$
\widehat{f}(u) = 0, \, \forall u\in (-\alpha_0,\alpha_0), \text{ and  }\ |\widehat{f}(u) | \le \gamma,\, \forall u\in \mathbb{R}. 
$$
We refer to $\alpha_0$ as the lowest frequency component in $f$. Also, $1/\alpha_0$ is the maximum wavelength of $f$ and a generalized notion of period for this class of signals. 
\end{defi}

The bound of ${\nabla}_{ \sigma, \bm \Xi}[\epsilon](\bm x_t)$ in this case can be established as follow. 
\begin{proposition}[high-frequency bandlimited noise]
\label{prop:high_freq}
Let $\epsilon$ be a real function on $\mathbb{R}^d$. Let $\alpha_0, \gamma >0$ and assume the cross sections $\eta(\cdot|\bm x, \bm \xi_i)$ of $\epsilon$ along $\bm \xi_1,\ldots, \bm \xi_d$ are high-frequency bandlimited signals, whose power spectrum is zero on $(-\alpha_0,\alpha_0)$ and uniformly bounded by $\gamma$ on $\mathbb{R}$, as in Definition \ref{def:bandlimited}. Then, we have 
\begin{gather}
\label{new_eq:12}
\begin{aligned}
\|{\nabla}_{ \sigma, \bm \Xi}[\epsilon](\bm x)\| & \le \frac{ \gamma\sqrt{d}}{\pi\sigma^2} \exp\left(-{2\pi^2 \alpha_0^2 \sigma^2 }\right). 
\end{aligned}
\end{gather}
\end{proposition}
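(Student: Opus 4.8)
The plan is to evaluate the convolution $(g_\sigma' * \eta)(0)$ that appears in \eqref{eq:8} on the Fourier side, where the bandlimited hypothesis on the cross-section $\eta=\eta(\cdot\,|\,\bm x,\bm\xi_i)$ becomes a support restriction and the Gaussian kernel $g_\sigma$ of \eqref{def:Gauss_kernel} supplies the exponential decay factor. First I would record the two elementary facts about the kernel: with the normalization of the Fourier transform fixed in Definition \ref{def:bandlimited}, $\widehat{g_\sigma}(u)=\exp(-2\pi^2\sigma^2 u^2)$, and hence, by the differentiation rule $\widehat{f'}(u)=i2\pi u\,\widehat{f}(u)$, $\widehat{g_\sigma'}(u)=i2\pi u\exp(-2\pi^2\sigma^2 u^2)$. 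Since $\eta$ is continuous and absolutely integrable and $g_\sigma'$ is Schwartz, the product $g_\sigma'*\eta$ is continuous and its transform $\widehat{g_\sigma'}\,\widehat{\eta}$ is integrable (Gaussian times polynomial times a bounded function), so Fourier inversion is valid pointwise at $y=0$ and gives
\begin{equation*}
{\mathscr{D}}\left[\eta_\sigma(0\,|\,\bm x,\bm\xi_i)\right] = (g_\sigma'*\eta)(0) = \int_{-\infty}^{\infty}\widehat{g_\sigma'}(u)\,\widehat{\eta}(u)\,du = \int_{-\infty}^{\infty} i2\pi u\,e^{-2\pi^2\sigma^2 u^2}\,\widehat{\eta}(u)\,du .
\end{equation*}

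Next I would invoke the two defining properties of a high-frequency bandlimited signal, namely $\widehat{\eta}(u)=0$ for $|u|<\alpha_0$ and $|\widehat{\eta}(u)|\le\gamma$ on $\mathbb{R}$. Taking absolute values inside the integral, bounding $|\widehat\eta|\le\gamma$, and using that $|u|e^{-2\pi^2\sigma^2u^2}$ is even,
\begin{equation*}
\left|{\mathscr{D}}\left[\eta_\sigma(0\,|\,\bm x,\bm\xi_i)\right]\right| \le 2\pi\gamma\int_{|u|\ge\alpha_0}|u|\,e^{-2\pi^2\sigma^2 u^2}\,du = 4\pi\gamma\int_{\alpha_0}^{\infty}u\,e^{-2\pi^2\sigma^2 u^2}\,du = \frac{\gamma}{\pi\sigma^2}\,e^{-2\pi^2\sigma^2\alpha_0^2},
\end{equation*}
the last equality being the substitution $w=u^2$, which reduces the integral to $\frac{1}{4\pi^2\sigma^2}e^{-2\pi^2\sigma^2\alpha_0^2}$. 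Since $\bm\Xi$ has orthonormal columns, $\|{\nabla}_{\sigma,\bm\Xi}[\epsilon](\bm x)\|^2=\sum_{i=1}^d\bigl|{\mathscr{D}}[\eta_\sigma(0\,|\,\bm x,\bm\xi_i)]\bigr|^2$, and bounding each of the $d$ terms by the estimate just obtained yields \eqref{new_eq:12}.

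The routine parts — the two kernel identities, the Gaussian integral, and the aggregation over the $d$ directions — are immediate; the one point that needs care is the analytic justification for passing to the Fourier domain, i.e. that Fourier inversion holds pointwise at $y=0$. This is exactly where the hypotheses of Definition \ref{def:bandlimited} (continuity and $L^1$ integrability of the cross-section, hence a bounded continuous $\widehat\eta$) together with the Schwartz decay of $g_\sigma'$ enter; note that $\widehat\eta$ itself need not be integrable, but this is harmless since it is always paired with the factor $e^{-2\pi^2\sigma^2 u^2}$. An essentially equivalent, inversion-free alternative is to substitute $\eta(v)=\int_{|u|\ge\alpha_0}\widehat\eta(u)e^{i2\pi uv}\,du$ directly into the integral defining $(g_\sigma'*\eta)(0)$ in \eqref{eq:8}, interchange the order of integration by absolute convergence, and evaluate the inner Gaussian integral $\int_{\mathbb{R}}v\,e^{-v^2/2\sigma^2}e^{i2\pi uv}\,dv$ in closed form, paralleling the Fourier-series computation used in the proof of Proposition \ref{prop:periodic}.
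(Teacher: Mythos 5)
Your proposal is correct and follows essentially the same route as the paper: compute $\widehat{g_\sigma'}(u)=i2\pi u\,e^{-2\pi^2\sigma^2u^2}$, pass to the Fourier side via the convolution theorem, use the support and $L^\infty$ bounds on $\widehat\eta$ to reduce to $4\pi\gamma\int_{\alpha_0}^\infty u\,e^{-2\pi^2\sigma^2u^2}\,du=\frac{\gamma}{\pi\sigma^2}e^{-2\pi^2\alpha_0^2\sigma^2}$, and sum over the $d$ directions. Your extra care in justifying the pointwise Fourier inversion (and the inversion-free alternative) is a minor refinement of a step the paper takes for granted.
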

\begin{proof}
We will first estimate each component of ${\nabla}_{ \sigma, \bm \Xi}[\epsilon](\bm x)$.  Recall that ${g_{\sigma}'}(v) = -\dfrac{v}{\sigma^3\sqrt{2\pi}}\exp\left(-\dfrac{v^2}{2\sigma^2}\right)$, we have 
$
\widehat{g_{\sigma}'}(u) =  i 2\pi u \exp(-{2\pi^2 \sigma^2 u^2}),\, \forall u\in \mathbb{R}.
$
By the convolution theorem, there follows 
\begin{align*}
& {\mathscr{D}}\left[\eta_{\sigma}(0 \, |\, \bm x, \bm \xi_i)\right]   = (g_{\sigma}' * \eta)(0)  = (\mathcal{F}^{-1}[\widehat{g_{\sigma}'}] * \mathcal{F}^{-1}[\widehat{\eta}]) (0) =\mathcal{F}^{-1}[\widehat{g_{\sigma}'}\, \widehat{\eta}\, ](0)
\\
&\qquad  = \mathcal{F}^{-1}\left[ i 2\pi u \exp\left(-{2\pi^2 \sigma^2 u^2}\right) \widehat{\eta}(u)\right] (0) = \int_{-\infty}^{\infty}i 2\pi u \exp\left(-{2\pi^2 \sigma^2 u^2}\right) \widehat{\eta}(u) du. 
\end{align*}
By assumption that $\eta(\cdot|\bm x,\bm\xi_i)$ are high-frequency bandlimited signals, it gives 
\begin{align*}
 {\mathscr{D}}\left[\eta_{\sigma}(0 \, |\, \bm x, \bm \xi_i)\right]  \le \, & 4\pi \gamma \int_{\alpha_0}^{\infty} u \exp\left(-{2\pi^2 \sigma^2 u^2}\right) du = \frac{ \gamma}{\pi\sigma^2} \exp\left(-{2\pi^2 \alpha_0^2 \sigma^2 }\right). 
\end{align*}
Summing the above estimate on $\{\bm \xi_1,\ldots, \bm\xi_d\}$ yields \eqref{new_eq:12}. 
\end{proof}

\subsection{Diminishing noise}
Now, we consider a diminishing noise model for $\epsilon$, in which the noise function ecays to $0$ when the optimizer reaches minimum. For that purpose, we assume $\epsilon$ is bounded by a smooth envelope function $\tilde{\epsilon}$, which is zero at $\bm x^*$: 
\begin{align}
\tilde{\epsilon}(\bm x) & = \beta \|\bm x- \bm x^*\|^2. \label{env1}
\end{align}

\begin{proposition}
\label{prop:dim_noise}
Let $\epsilon$ be a real function on $\mathbb{R}^d$ such that $|\epsilon(\bm x)| \le \tilde{\epsilon}(\bm x),\, \forall \bm x\in \mathbb{R}^d$, where $\tilde{\epsilon}(\bm x)$ is defined by \eqref{env1}. Then, we have 
\begin{gather}
\label{new_eq:13}
\begin{aligned}
\|{\nabla}_{ \sigma, \bm \Xi}[\epsilon](\bm x)\| & \le \beta \sqrt{\frac{2d}{\pi}} \left(2 \sigma + \dfrac{\|\bm x-\bm x^*\|^2}{\sigma} \right),\ \forall \bm x\in \mathbb{R}^d,
\end{aligned}
\end{gather}

\end{proposition}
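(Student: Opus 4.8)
The plan is to bound each scalar component $\mathscr{D}[\eta_{\sigma}(0\,|\,\bm x,\bm \xi_i)]$ of the DGS gradient individually and then recombine through the identity $\|{\nabla}_{\sigma,\bm\Xi}[\epsilon](\bm x)\|^2 = \sum_{i=1}^d |\mathscr{D}[\eta_{\sigma}(0\,|\,\bm x,\bm \xi_i)]|^2$, which holds because $\bm\Xi$ is orthonormal. Starting from the derivative representation in \eqref{eq:8}, I would rewrite the $i$-th component as the 1D integral $\mathscr{D}[\eta_{\sigma}(0\,|\,\bm x,\bm \xi_i)] = \frac{1}{\sigma\sqrt{2\pi}}\int_{\mathbb{R}}\epsilon(\bm x+\sigma v\bm \xi_i)\,v\,e^{-v^2/2}\,dv$ and, since $v\mapsto v\,e^{-v^2/2}$ is odd, fold it onto the half-line as $\mathscr{D}[\eta_{\sigma}(0\,|\,\bm x,\bm \xi_i)] = \frac{1}{\sigma\sqrt{2\pi}}\int_{0}^{\infty}\big[\epsilon(\bm x+\sigma v\bm \xi_i)-\epsilon(\bm x-\sigma v\bm \xi_i)\big]\,v\,e^{-v^2/2}\,dv$.

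The key step is to estimate the bracketed difference using only the hypothesis $|\epsilon|\le\tilde{\epsilon}$. Writing $\bm z := \bm x-\bm x^*$, we have $|\epsilon(\bm x\pm\sigma v\bm \xi_i)|\le\beta\|\bm z\pm\sigma v\bm \xi_i\|^2 = \beta(\|\bm z\|^2 \pm 2\sigma v\langle\bm z,\bm \xi_i\rangle + \sigma^2 v^2)$, and adding the two bounds makes the cross terms $\pm 2\sigma v\langle\bm z,\bm \xi_i\rangle$ cancel, leaving $|\epsilon(\bm x+\sigma v\bm \xi_i)-\epsilon(\bm x-\sigma v\bm \xi_i)|\le 2\beta(\|\bm z\|^2+\sigma^2 v^2)$. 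This cancellation is the only nonroutine point: the usual shortcut ``only the odd part of $\eta$ survives, so the constant part drops out'' is unavailable here because $\epsilon$ is merely dominated by the envelope $\tilde{\epsilon}$, not equal to any even function, so the symmetrization of the domain must be done before invoking the pointwise bound.

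What remains is bookkeeping: substituting this estimate and evaluating the two Gaussian moments $\int_0^\infty v\,e^{-v^2/2}\,dv = 1$ and $\int_0^\infty v^3 e^{-v^2/2}\,dv = 2$ gives $|\mathscr{D}[\eta_{\sigma}(0\,|\,\bm x,\bm \xi_i)]|\le\frac{2\beta}{\sigma\sqrt{2\pi}}(\|\bm z\|^2+2\sigma^2) = \beta\sqrt{\tfrac{2}{\pi}}\big(2\sigma+\|\bm x-\bm x^*\|^2/\sigma\big)$, uniformly in $i$. Summing the squares over the $d$ orthonormal directions and taking the square root supplies the factor $\sqrt d$ and yields \eqref{new_eq:13}. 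I do not anticipate any genuine obstacle beyond the symmetrization trick described above; the rest is direct computation.
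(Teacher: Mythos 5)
Your proposal is correct and follows essentially the same route as the paper: bound each directional component, insert the envelope $\tilde{\epsilon}$, observe that the linear cross term $\pm 2\sigma v\langle \bm x-\bm x^*,\bm\xi_i\rangle$ drops out by symmetry in $v$, evaluate the Gaussian moments, and sum over the $d$ orthonormal directions. Your explicit folding onto the half-line is just a repackaging of the paper's step of integrating the even envelope against $|g_{\sigma}'|$, so there is no substantive difference.
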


\begin{proof}
For each $\bm \xi_i$ and $\bm x\in \mathbb{R}^d$, we have 
\begin{align*}
 {\mathscr{D}}\left[\eta_{\sigma}(0 \, |\, \bm x, \bm \xi_i)\right]    = \, & (g_{\sigma}' * \eta)(0)  =  \int_{-\infty}^{\infty} \eta(- v\, |\, \bm x, \bm \xi_i) \, {g_{\sigma}'}(v) dv
\\
 = & \int_{-\infty}^{\infty} \epsilon(\bm x - v \bm \xi_i ) {g_{\sigma}'}(v)dv  \le   \int_{-\infty}^{\infty}   \tilde{\epsilon}(\bm x - v \bm \xi_i ) |{g_{\sigma}'}(v)| dv .
\end{align*}
Written $\tilde{\epsilon}(\bm x - v \bm \xi_i ) = \beta \|\bm x- \bm x^* - v \bm \xi_i \|^2 = \beta \|\bm x- \bm x^*\|^2 + \beta v^2\|\bm \xi_i\|^2 - 2\beta v \langle\bm x- \bm x^* ,\bm \xi_i\rangle$, with notice that only the even part of the integrated function needs to be taken care of, there follows
\begin{gather}
\label{new_eq:14}
 \begin{aligned}   
  &  {\mathscr{D}}\left[\eta_{\sigma}(0 \, |\, \bm x, \bm \xi_i)\right]  \le\,     \frac{\beta}{\sigma^3\sqrt{2\pi}} \int_{-\infty}^{\infty}  {|v|}\exp(-\dfrac{v^2}{2\sigma^2}) (\|\bm x- \bm x^*\|^2 + v^2\|\bm \xi_i\|^2) 
  \\
  = \, &   \frac{2\beta}{\sigma^3\sqrt{2\pi}} \|\bm x- \bm x^*\|^2  \int_0^{\infty}  {v}\exp(-\dfrac{v^2}{2\sigma^2})dv 
  +  \frac{2\beta}{\sigma^3\sqrt{2\pi}}  \|\bm \xi_i\|^2 \int_0^{\infty}  {v}^3 \exp(-\dfrac{v^2}{2\sigma^2})dv
   \\
    = \, & \frac{2\beta}{\sigma^3\sqrt{2\pi}} \left(\sigma^2  \|\bm x- \bm x^*\|^2 + 2\sigma^4 \|\bm \xi_i\|^2 \right) = \beta \sqrt{\frac{2}{\pi}} \left( \frac{ \|\bm x- \bm x^*\|^2}{\sigma} + 2\sigma 
    \right) .
 \end{aligned}
 \end{gather}
Summing \eqref{new_eq:14} on $\{\bm \xi_1,\ldots, \bm\xi_d\}$ gives \eqref{new_eq:13}.
\end{proof}
\begin{remark}
The estimate \eqref{new_eq:13} is sharp in the sense that for a fixed $\bm x_0$, we can find a noise function $\epsilon:\mathbb{R}^d\to \mathbb{R}$ satisfying $|\epsilon(\bm x)| \le \tilde{\epsilon}(\bm x),\, \forall \bm x\in \mathbb{R}^d$ such that equality in \eqref{new_eq:13} occurs at $\bm x_0$. While the envelope bound \eqref{env1} seems to be loose, especially away from the global minimum, it is the decaying rate of the envelope in the near neighborhood of $\bm x^*$ that determines the dependence of $\|{\nabla}_{ \sigma, \bm \Xi}[\epsilon](\bm x)\| $ on $\sigma$ near $\bm x^*$. 

More complicated examples of the envelope of the noise can be considered with more involved calculation. Below, we present such an example in case $d=1$
\begin{align}
\tilde{\epsilon}(x) & = 1 - \exp(-\beta(x-x^*)^2),\, \forall x \in \mathbb{R} , \label{env2}
\end{align}
($x$ and $x^*$ are now scalar, hence regular font). This bound enjoys similar decaying rate towards $x^*$ as in \eqref{env1}, but is significantly more restrictive globally. We show that 
\begin{gather}
\label{new_eq:16}
\begin{aligned}
|{\nabla}_{\sigma}[\epsilon](x)| & \le  \sqrt{\frac{2}{\pi}} \left(\sigma + \dfrac{\beta ( x- x^*)^2}{\sigma} \right),\ \forall  x\in \mathbb{R}. 
\end{aligned}
\end{gather}
which is similar to Proposition \ref{prop:dim_noise}, except for a weaker dependence on $\beta$.


To prove \eqref{new_eq:16}, note that 
\begin{align*}
 & |{\nabla}_{\sigma}[\epsilon](x)|  =   \int_{-\infty}^{\infty} \epsilon( x - v ) {g_{\sigma}'}(v)dv \le    \int_{-\infty}^{0} \tilde{\epsilon}( x - v ) {g_{\sigma}'}(v)dv  -  \int_{0}^{\infty} \tilde{\epsilon}( x - v ) {g_{\sigma}'}(v)dv 
 \end{align*}
 The first term can be estimated as 
 \begin{align*}
 &  \int_{-\infty}^{0} \tilde{\epsilon}( x - v ) {g_{\sigma}'}(v)dv  =  - \dfrac{1}{{\sigma^3\sqrt{2\pi}}} \int_{-\infty}^{0} [1 - \exp(-\beta(x -x^* - v)^2)] \exp\left(-\dfrac{v^2}{2\sigma^2}\right) v dv
 \\
    & = \left((x-x^*) \sqrt{\pi \beta {\sigma^2(\sigma^2+1)}}  \exp\left(\frac{\beta \sigma^2 (x-x^*)^2}{\sigma^2+1}\right) \emph{erfc}\left(\frac{\sigma (x-x^*)\sqrt{\beta}}{\sqrt{\sigma^2+1}}\right) - \sigma^2 - 1 \right)   
    \\
    & \qquad\qquad \cdot  \frac{1}{\sigma^3\sqrt{2\pi}}\cdot\frac{\sigma^2}{(\sigma^2+1)^2}\cdot \exp(-\beta(x-x^*)^2) + \frac{1}{\sigma^3\sqrt{2\pi}}\cdot \sigma^2
    \\
    & = \frac{(x-x^*)\sqrt{\beta}  }{\sqrt{2}(\sigma^2+1)^{3/2}}  \exp\left(- \frac{ \beta (x-x^*)^2}{\sigma^2+1}\right) \emph{erfc}\left(\frac{\sigma (x-x^*)\sqrt{\beta}}{\sqrt{\sigma^2+1}}\right) 
    \\
    & \qquad \qquad  -  \frac{1}{\sigma(\sigma^2+1)\sqrt{2\pi}}\cdot  \exp(-\beta(x-x^*)^2)  + \frac{1}{\sigma\sqrt{2\pi}}. 
\end{align*}
Similarly, 
\begin{align*}
 &  -  \int_{0}^{\infty} \tilde{\epsilon}( x - v ) {g_{\sigma}'}(v)dv   =  \frac{1}{\sigma\sqrt{2\pi}} - \frac{1}{\sigma(\sigma^2+ 1)\sqrt{2\pi}} \cdot \exp(-\beta(x-x^*)^2)  
     \\
     &  \qquad \qquad - \frac{(x-x^*)\sqrt{\beta} }{\sqrt{2}(\sigma^2+1)^{3/2}}    \exp\left(-\frac{\beta(x-x^*)^2}{\sigma^2 +1}\right) \left(2 - \emph{erfc}\left({\frac{\sigma(x-x^*)\sqrt{\beta}}{\sqrt{\sigma^2+1}}}\right)  \right).
\end{align*}
There follows
\begin{align*}
       & |{\nabla}_{\sigma}[\epsilon](x)|   \le \,  \frac{\sqrt{2}}{\sigma\sqrt{\pi}} - \frac{\sqrt{2}}{\sigma(\sigma^2+ 1)\sqrt{\pi}} \cdot \exp(-\beta(x-x^*)^2)
        \\
      &  \qquad\qquad  -   \frac{(x-x^*)\sqrt{2\beta}  }{(\sigma^2+1)^{3/2}}  \exp\left(- \frac{ \beta(x-x^*)^2}{\sigma^2+1}\right) \emph{erf}\left(\frac{\sigma (x-x^*)\sqrt{\beta}}{\sqrt{\sigma^2+1}}\right).
\end{align*}
Since $-\exp(-\beta(x-x^*)^2)  < - 1 + \beta {(x-x^*)^2} $, we have
\begin{align*}
     |{\nabla}_{\sigma}[\epsilon](x)|   \le \frac{\sqrt{2}}{\sigma\sqrt{\pi}} - \frac{\sqrt{2}}{\sigma(\sigma^2+ 1)\sqrt{\pi}} (1 - \beta{(x-x^*)^2} ) \le  \sqrt{\frac{2}{\pi}} \left(\sigma + \frac{\beta (x-x^*)^2}{\sigma} \right),
\end{align*}
as desired.
\end{remark}

\section{Convergence analysis}
\label{sec:analysis}

This section {contains} the main results of our paper. We establish the convergence rate of Algorithm \eqref{alg:GD} in two scenarios described in the last section, and show the advantage of DGS gradient when using moderate or large smoothing radius $\sigma$. The interested reader is referred to \cite{DBLP:conf/uai/ZhangTLZ21} for the asymptotic consistency of the DGS gradient (i.e., its convergence to local gradient when $\sigma$ approaches to $0$) and the error analysis of the method in case of small smoothing radius. 

Let us define the discrepancy between the DGS gradient and the gradient of $\phi$ at iteration $t$: 
\begin{align}
\label{def:B}
    B_t =  \| \widetilde{\nabla}^M_{ \sigma, \bm \Xi}[F](\bm x_t) -  {\nabla}\phi(\bm x_t)\|^2.  
\end{align}
$\nabla \phi$ represents the search direction that the optimizer should follow. Since $\phi$ is not accessible, we cannot compute $\nabla\phi$ directly, but rely on the DGS gradient, which uses Gaussian smoothing to filter out the noise in the observation, as a reliable surrogate for $\nabla \phi$. Intuitively, the better $\widetilde{\nabla}^M_{ \sigma, \bm \Xi}[F](\bm x_t)$ can approximate ${\nabla}\phi(\bm x_t)$, the faster convergence we can expect of the gradient scheme  \eqref{alg:GD}.  

Let $r_t$ denote the distance between the iterate $\bm x_t$ and the global minimum $\bm x^*$: 
\begin{align*}
r_t = \|\bm x_t - \bm x^*\|, 
\end{align*}
we first have the following general {result} establishing an upper bound on $r_t$ at each iteration in terms of $B_t$.
\begin{lemma}
\label{lemma:bound_rt}
Consider {Algorithm} \eqref{alg:GD}. Assume $\lambda \le \dfrac{1}{8L}$. Then, at iteration $t$, we have 
\begin{align}
\label{new_eq:0}
  r_{t+1}^2  {\le} \,      
 (1 - (\lambda \tau -8\lambda^2 \tau L)) r_{t}^2
 +  \left(\frac{2\lambda}{\tau}  +  2 \lambda^2 \right) B_t. 
\end{align}
\end{lemma}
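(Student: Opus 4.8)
The plan is to run the standard descent-lemma argument for gradient descent on a strongly convex function, but with the true gradient $\nabla\phi(\bm x_t)$ replaced by the computable surrogate $\widetilde{\nabla}^M_{\sigma,\bm\Xi}[F](\bm x_t)$, treating the difference as an error term controlled by $B_t$. Write $\bm x_{t+1} - \bm x^* = (\bm x_t - \bm x^*) - \lambda \widetilde{\nabla}^M_{\sigma,\bm\Xi}[F](\bm x_t)$, expand $r_{t+1}^2 = \|\bm x_{t+1}-\bm x^*\|^2$, and then substitute $\widetilde{\nabla}^M_{\sigma,\bm\Xi}[F](\bm x_t) = \nabla\phi(\bm x_t) + \bm e_t$, where $\bm e_t := \widetilde{\nabla}^M_{\sigma,\bm\Xi}[F](\bm x_t) - \nabla\phi(\bm x_t)$ so that $\|\bm e_t\|^2 = B_t$.

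First I would expand
\begin{equation*}
r_{t+1}^2 = r_t^2 - 2\lambda\langle \nabla\phi(\bm x_t) + \bm e_t,\ \bm x_t - \bm x^*\rangle + \lambda^2 \|\nabla\phi(\bm x_t)+\bm e_t\|^2 .
\end{equation*}
For the inner-product term I would use strong convexity (Assumption \ref{assump:strongly_convex}) in the form $\langle \nabla\phi(\bm x_t),\bm x_t-\bm x^*\rangle \ge \phi(\bm x_t)-\phi(\bm x^*) + \tfrac{\tau}{2} r_t^2$ together with $\phi(\bm x_t)-\phi(\bm x^*)\ge \tfrac{\tau}{2} r_t^2$ (since $\bm x^*$ is the minimizer), giving $\langle \nabla\phi(\bm x_t),\bm x_t-\bm x^*\rangle \ge \tau r_t^2$; alternatively I could keep $\phi(\bm x_t)-\phi(\bm x^*)$ explicit and combine with the $\lambda^2$ term via $L$-smoothness $\|\nabla\phi(\bm x_t)\|^2 \le 2L(\phi(\bm x_t)-\phi(\bm x^*))$. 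The cross term $-2\lambda\langle \bm e_t, \bm x_t-\bm x^*\rangle$ is handled by Young's inequality in the form $2\langle \bm e_t,\bm x_t-\bm x^*\rangle \le \tfrac{\tau}{2}\|\bm x_t-\bm x^*\|^2 + \tfrac{2}{\tau}\|\bm e_t\|^2$ (matching the $2\lambda/\tau$ coefficient on $B_t$ in the claim), and for $\lambda^2\|\nabla\phi(\bm x_t)+\bm e_t\|^2$ I would use $\|\bm a+\bm b\|^2 \le 2\|\bm a\|^2 + 2\|\bm b\|^2$ so the $2\bm e_t$ part contributes $2\lambda^2 B_t$ (again matching), while the $2\lambda^2\|\nabla\phi(\bm x_t)\|^2$ part is absorbed by bounding $\|\nabla\phi(\bm x_t)\| = \|\nabla\phi(\bm x_t)-\nabla\phi(\bm x^*)\| \le L r_t$ via Lipschitzness (Assumption \ref{assump:lipschitz_cont}) and $\nabla\phi(\bm x^*)=0$, giving $2\lambda^2 L^2 r_t^2$.

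Collecting terms yields $r_{t+1}^2 \le (1 - \lambda\tau + \tfrac{\lambda\tau}{2} + 2\lambda^2 L^2) r_t^2 + (\tfrac{2\lambda}{\tau} + 2\lambda^2) B_t$; to reach the stated coefficient $1-(\lambda\tau - 8\lambda^2\tau L)$ I would, at the point where the $\langle\nabla\phi(\bm x_t),\bm x_t-\bm x^*\rangle$ term is used, instead bound the relevant portion by $\phi(\bm x_t)-\phi(\bm x^*) \ge \tfrac{1}{2L}\|\nabla\phi(\bm x_t)\|^2$ and trade part of the strong-convexity gain against the $\lambda^2\|\nabla\phi\|^2$ cost, using $\lambda \le 1/(8L)$ to make the bookkeeping close — specifically keeping a term $2\lambda(\phi(\bm x_t)-\phi(\bm x^*)) - 2\lambda^2 L (\phi(\bm x_t)-\phi(\bm x^*)) \ge 0$ when $\lambda\le 1/L$, and converting the leftover via $\phi(\bm x_t)-\phi(\bm x^*)\ge \tfrac{\tau}{2}r_t^2$ to produce the $-8\lambda^2\tau L r_t^2$ correction. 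The main obstacle is this last step: getting the constants to line up exactly with the claimed $8\lambda^2\tau L$ requires being careful about which convexity/smoothness inequality is applied to which copy of $\|\nabla\phi(\bm x_t)\|^2$ and $\phi(\bm x_t)-\phi(\bm x^*)$, and using $\lambda\le 1/(8L)$ at precisely the right place; the rest is routine expansion and Young's inequality.
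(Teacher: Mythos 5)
Your proposal is correct and follows essentially the same route as the paper: expand $r_{t+1}^2$, split the surrogate gradient into $\nabla\phi(\bm x_t)+\bm e_t$, apply Young's inequality with weights $\frac{\tau}{2}$ and $\frac{2}{\tau}$ to the cross term, use $\|\bm a+\bm b\|^2\le 2\|\bm a\|^2+2\|\bm b\|^2$, keep $\phi(\bm x_t)-\phi(\bm x^*)$ explicit via strong convexity, bound $\|\nabla\phi\|^2$ by $2L(\phi(\bm x_t)-\phi(\bm x^*))$, and convert the nonnegative leftover (guaranteed by $\lambda\le 1/(8L)$) back to $r_t^2$ via $\phi(\bm x_t)-\phi(\bm x^*)\ge\frac{\tau}{2}r_t^2$. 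The constant-matching you worry about is a non-issue: your bookkeeping actually yields a coefficient at least as small as $1-\lambda\tau+8\lambda^2\tau L$ (the paper deliberately uses the loose bound $2\lambda^2\|\nabla\phi\|^2\le 16\lambda^2 L(\phi(\bm x_t)-\phi(\bm x^*))$ and discards the net $-\frac{\lambda\tau}{2}r_t^2$ surplus to land exactly on the stated form), so your sharper estimate implies the claim.
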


\begin{proof}
First, we have for all $\bm x \in \mathbb{R}^d$
\begin{gather}
\begin{aligned}
\|\widetilde{\nabla}^M_{\sigma, \bm \Xi}[F](\bm x)]\|^2  & =  \|\widetilde{\nabla}^M_{\sigma, \bm \Xi}[F](\bm x)] - \nabla\phi(\bm x) + \nabla\phi(\bm x) \|^2
    \\
     & \le\, 2 \|\widetilde{\nabla}^M_{\sigma, \bm \Xi}[F](\bm x)] - \nabla \phi(\bm x)\|^2 + 2 \|\nabla \phi(\bm x)\|^2.
\end{aligned}
\label{new_eq:1}
\end{gather}

By the definition of $r_t$, {we get}
\begin{align}
&\qquad r_{t+1}^2   = \, \|\bm x_t - \lambda \widetilde{\nabla}^M_{ \sigma, \bm \Xi}[F](\bm x_t) - \bm x^*\|^2  \notag 
 \\
& = \, r_{t}^2 - 2\lambda \langle \widetilde{\nabla}^M_{ \sigma, \bm \Xi}[F](\bm x_t), \bm x_t - \bm x^* \rangle + \lambda^2 \| \widetilde{\nabla}^M_{ \sigma, \bm \Xi}[F](\bm x_t)\|^2 \notag
\\
&\! \overset{\eqref{new_eq:1}}{\le} \, r_{t}^2 - 2\lambda \langle \widetilde{\nabla}^M_{ \sigma, \bm \Xi}[F](\bm x_t)  -  {\nabla}\phi(\bm x_t), \bm x_t - \bm x^* \rangle 
 - 2\lambda \langle {\nabla}\phi(\bm x_t), \bm x_t - \bm x^* \rangle \notag
 \\
&  \qquad \qquad + 2 \lambda^2 \|\widetilde{\nabla}^M_{\sigma, \bm \Xi}[F](\bm x_t)] - \nabla \phi(\bm x_t)\|^2 + 2 \lambda^2  \|\nabla \phi(\bm x_t)\|^2{.}  \label{new_eq:2}
\end{align}

We {then} proceed to bound the right hand side of \eqref{new_eq:2}. {Since} $\phi$ is strongly convex, 
\begin{gather}
\label{new_eq:3}
\begin{aligned}
      & - 2\lambda \langle\nabla \phi(\bm x_t) , \bm x_t - \bm  x^* \rangle \le 2\lambda \phi(\bm x^*) 
 - 2\lambda \phi(\bm x_t)  - {\lambda \tau}\|\bm x^* - \bm x_t\|^2. 
\end{aligned}
\end{gather}

On the other hand, 
\begin{align}
&   - 2\lambda \langle \widetilde{\nabla}^M_{ \sigma, \bm \Xi}[F](\bm x_t) -  {\nabla}\phi(\bm x_t), \bm x_t - \bm x^* \rangle     \label{new_eq:4}
\\
& \qquad \le\,  \frac{2\lambda}{\tau} \| \widetilde{\nabla}^M_{ \sigma, \bm \Xi}[F](\bm x_t) -  {\nabla}\phi(\bm x_t)\|^2 + \frac{\lambda\tau}{2} \| \bm x_t - \bm x^* \|^2{,} \notag
\end{align}

Applying an estimate for convex, $\mathbb{C}^{1,1}$-functions {(see, e.g., Theorem 2.1.5, \cite{Nesterov_book_2004}), we have}
\begin{align}
  & 2\lambda^2 \| \nabla\phi (\bm x)\|^2 
 \le 16 \lambda^2 L(\phi(\bm x_t) - \phi(\bm x^*)). \label{new_eq:5}
\end{align}

Since $\phi$ is a strongly convex function, we have that 
\begin{align}
    & - (\phi(\bm x_t) -   \phi(\bm x^*)) 
       \le \, - \, \frac{\tau}{2}\|\bm x_t - \bm x^*\|^2. \label{new_eq:6}
\end{align}

Combining \eqref{new_eq:2}--\eqref{new_eq:6}, there holds 
\begin{gather}
  \label{new_eq:7}
\begin{aligned}
  r_{t+1}^2   \,& {\le} \, r_{t}^2     
 - (2\lambda -16\lambda^2 L)( \phi(\bm x_t) -   \phi(\bm x^*) )  
  +  \left(\frac{2\lambda}{\tau}  +  2 \lambda^2 \right) B_t
 \\
 &{\le} \,      
 (1 - (\lambda \tau -8\lambda^2 \tau L)) r_{t}^2
 +  \left(\frac{2\lambda}{\tau}  +  2 \lambda^2 \right) B_t. 
\end{aligned}
\end{gather}

We arrive at the conclusion.
\end{proof}

{The lemma below provides an important estimate of $B_t$ for the next analysis.}


\begin{lemma}
\label{Lemma:bound_Bt}
Let $F$ be a function defined on $\mathbb{R}^d$ {that} can be represented as $F=\phi + \epsilon$, where $\phi$ satisfies Assumption \ref{assump:lipschitz_cont}. Then we have 
\begin{align}
    \label{new_eq:8}
    B_t \le\, C \frac{ {\pi} (M\,!)^2 d}{4^M ((2M)\,!)^2} \sigma^{4M-2} + 48  L^2 d \sigma^2 + 3 \|{\nabla}_{ \sigma, \bm \Xi}[\epsilon](\bm x_t)\|^2,\, \forall t > 0. 
\end{align}
\end{lemma}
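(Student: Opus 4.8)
The plan is to split the error $\widetilde{\nabla}^M_{\sigma,\bm\Xi}[F](\bm x_t)-\nabla\phi(\bm x_t)$ into a Gauss--Hermite quadrature error, the directional smoothing error for $\phi$, and the DGS gradient of the noise $\epsilon$, and then bound the three pieces separately. Since $\bm\Xi$ is orthonormal, $\nabla\phi(\bm x_t)=\bm\Xi^{\top}\big(\bm\xi_1^{\top}\nabla\phi(\bm x_t),\dots,\bm\xi_d^{\top}\nabla\phi(\bm x_t)\big)^{\top}$, so that $B_t=\sum_{i=1}^{d}\big|\widetilde{\mathscr{D}}^M[G_\sigma(0\,|\,\bm x_t,\bm\xi_i)]-\bm\xi_i^{\top}\nabla\phi(\bm x_t)\big|^2$ and it suffices to estimate one direction at a time. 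By linearity of the convolution, $G_\sigma=\phi_\sigma+\eta_\sigma$, where $\phi_\sigma$ denotes the Gaussian smoothing of the cross section of $\phi$ along $\bm\xi_i$; inserting $\mathscr{D}[G_\sigma]$ and $\mathscr{D}[\phi_\sigma]$, I would write each summand (all quantities evaluated at $0\,|\,\bm x_t,\bm\xi_i$) as
\begin{equation*}
\widetilde{\mathscr{D}}^M[G_\sigma]-\bm\xi_i^{\top}\nabla\phi
=\big(\widetilde{\mathscr{D}}^M-\mathscr{D}\big)[G_\sigma]
+\big(\mathscr{D}[\phi_\sigma]-\bm\xi_i^{\top}\nabla\phi\big)
+\mathscr{D}[\eta_\sigma],
\end{equation*}
and use $|a+b+c|^2\le 3(|a|^2+|b|^2+|c|^2)$. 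Summing over $i$, the third group contributes exactly $3\|\nabla_{\sigma,\bm\Xi}[\epsilon](\bm x_t)\|^2$, which is already the last term of \eqref{new_eq:8}, so only the quadrature and smoothing errors remain.

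For the smoothing error of $\phi$, I would use $\mathbb{E}_{v\sim\mathcal{N}(0,1)}[v]=0$ and $\mathbb{E}[v^2]=1$ to rewrite $\mathscr{D}[\phi_\sigma(0\,|\,\bm x_t,\bm\xi_i)]-\bm\xi_i^{\top}\nabla\phi(\bm x_t)=\tfrac1\sigma\,\mathbb{E}_v\big[\big(\phi(\bm x_t+\sigma v\bm\xi_i)-\phi(\bm x_t)-\sigma v\,\bm\xi_i^{\top}\nabla\phi(\bm x_t)\big)v\big]$, and bound the bracket by $\tfrac L2\sigma^2 v^2$ via Assumption \ref{assump:lipschitz_cont}; the remaining Gaussian absolute moment makes each directional error of order $L\sigma$, and after squaring, summing the $d$ directions and multiplying by $3$ one obtains the term $48L^2 d\sigma^2$ (the constant is not optimized). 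For the quadrature error I would invoke the Gauss--Hermite remainder bound recalled in Section \ref{sec:ada_DGS-ES}: the integrand defining $\mathscr{D}[G_\sigma]$ carries the monomial factor $v$ and has argument $\bm x_t+\sigma v\bm\xi_i$, so its $2M$-th derivative scales like $\sigma^{2M}$ times a bound on the high-order directional derivatives of $F$; after dividing by the $\tfrac1\sigma$ prefactor, each directional quadrature error is of order $\tfrac{M!\sqrt\pi}{2^M(2M)!}\,\sigma^{2M-1}$, and squaring, summing the $d$ directions and multiplying by $3$ gives $C\,\tfrac{\pi(M!)^2 d}{4^M((2M)!)^2}\,\sigma^{4M-2}$, with $C$ absorbing the derivative bound and the constant hidden in the Gauss--Hermite estimate. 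Adding the three bounds yields \eqref{new_eq:8}.

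The hard part is the quadrature term: the Gauss--Hermite remainder is controlled through the $2M$-th derivative of the integrand, hence through high-order directional derivatives of $F=\phi+\epsilon$, which Assumption \ref{assump:lipschitz_cont} does not provide (and which may be large when $\epsilon$ oscillates). This step therefore relies on the tacit understanding that $F$ is smooth enough for such a derivative bound to exist, with $C$ depending on it --- or, equivalently, on keeping $M$ (or $\sigma$) in a regime where this contribution is harmless. Everything else --- the three-term split, the inequality $|a+b+c|^2\le 3(\cdots)$, and the elementary Gaussian-moment estimate for the smoothing error --- is routine.
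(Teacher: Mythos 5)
Your proposal is correct and follows essentially the same route as the paper: the identical three-term decomposition with $|a+b+c|^2\le 3(|a|^2+|b|^2+|c|^2)$, the cited Gauss--Hermite remainder for the quadrature term, and a per-direction smoothing error of order $L\sigma$ (the paper invokes an adaptation of Lemma 3 of \cite{NesterovSpokoiny15} giving the bound $4\sigma L$, whereas you compute the Gaussian moments directly; both yield the $48L^2 d\sigma^2$ term). Your caveat that the Gauss--Hermite constant tacitly requires higher-order smoothness of $F$ than Assumption \ref{assump:lipschitz_cont} provides applies equally to the paper's own proof, which likewise absorbs this into the unspecified constant $C$.
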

\begin{proof}
For {a} unit vector $\bm \xi \in \mathbb{R}^d$, define $\nabla_{\bm \xi} \phi(\bm x)$ the partial derivatives of $\phi$ at $\bm x\in \mathbb{R}^d$ in direction $\bm \xi$. {With a slight abuse of notation, we denote $\phi_\sigma(y|\bm x,\bm \xi)$} the 1D Gaussian smoothing of $\phi(\bm x)$ along the direction $\bm \xi$ with {a} smoothing radius $\sigma$, i.e., $\phi_\sigma(y|\bm x,\bm \xi) = \mathbb{E}_{v \sim \mathcal{N}(0, 1)} \left[\phi(\bm x + (y + \sigma v) \bm \xi) \right]$. By definition \eqref{def:B} of $B_t$, 
\begin{gather}
\label{new_eq:8b}
\begin{aligned}
         B_t &  = \| \widetilde{\nabla}^M_{ \sigma, \bm \Xi}[F](\bm x_t) -  {\nabla}\phi(\bm x_t)\|^2 \le 3 \| \widetilde{\nabla}^M_{ \sigma, \bm \Xi}[F](\bm x_t) -  {\nabla}_{ \sigma, \bm \Xi}[F](\bm x_t)\|^2
        \\
        &\qquad   + 3 \| {\nabla}_{ \sigma, \bm \Xi}[\phi](\bm x_t) -  {\nabla}\phi(\bm x_t)\|^2 + 3 \|{\nabla}_{ \sigma, \bm \Xi}[\epsilon](\bm x_t)\|^2
        \\
        & = 3\sum_{i=1}^d \left| \widetilde{\mathscr{D}}^M\left[G_{\sigma}(0 \, |\, \bm x_t, \bm \xi_i)\right] - {\mathscr{D}}\left[G_{\sigma}(0 \, |\, \bm x_t, \bm \xi_i)\right]\right|^2 
        \\
        & \qquad + 3\sum_{i=1}^d \left|{\mathscr{D}}\left[\phi_{\sigma}(0 \, |\, \bm x_t, \bm \xi_i)\right] - \nabla_{\bm \xi} \phi(\bm x) \right|^2 + 3 \|{\nabla}_{ \sigma, \bm \Xi}[\epsilon](\bm x_t)\|^2. 
\end{aligned}
\end{gather}
We proceed to bound the first and second terms in the above right hand side. First, the approximation error of the Gauss-Hermite formula can be bounded as 
\begin{align}
\label{GH_error}
\left|\widetilde{\mathscr{D}}^M[G_\sigma(0 \, |\, \bm x_t, \bm \xi_i)] - \mathscr{D}[G_\sigma(0 \, |\, \bm x_t, \bm \xi_i)] \right| \le C \frac{ M\,!\sqrt{\pi}}{2^M(2M)\,!} \sigma^{2M-1}, 
\end{align}
where the constant $C>0$ is independent of $M$ and $\sigma$ \cite{Handbook}. On the other hand, an adaptation of \cite[Lemma 3]{NesterovSpokoiny15} to {the} 1D Gaussian smoothing gives 
\begin{align}
    \label{new_eq:9}
    \ \left|{ \mathscr{D}}\left[\phi_{\sigma}(0 \, |\, \bm x_t, \bm \xi_i)\right] - \nabla_{\bm \xi_i} \phi(\bm x_t) \right| \le 4\sigma L. 
\end{align}
Summing \eqref{GH_error} and \eqref{new_eq:9} from $i=1$ to $d$ and combining with \eqref{new_eq:8b} conclude the lemma. 
\end{proof}

As we see in \eqref{new_eq:8}, the discrepancy between the DGS search direction and $\nabla \phi$ can be broken down into three terms, correspondingly originating from: i) the use of GH quadrature to approximate the DGS gradient; ii) the difference between DGS gradient and standard gradient of $\phi$; and iii) the perturbation coming from the DGS gradient of the noise function $\epsilon$ (which has been estimated in Section \ref{sec:statement}). 


\subsection{Wave-like noise}$ $
{We first consider the periodic noise model. Assume that the noise function satisfies the conditions given}
in Proposition \ref{prop:periodic}, 
{then} $B_t$ is uniformly bounded for all $t\ge 0$. The following result can be implied from Lemma \ref{lemma:bound_rt}, by applying \eqref{new_eq:0} recursively.

\begin{lemma}
\label{cor:lemma:bound_rt1}
Consider {Algorithm} \eqref{alg:GD} with a fixed step size $ \lambda < \dfrac{1}{8L}$. Assume $B>0$ satisfies $B_t \le B, \forall t> 0$. Then, at iteration $t$, we have 
\begin{align*}
  r_{t+1}^2  {\le} \,      
  (1 - (\lambda \tau -8\lambda^2 \tau L)) ^{t+1} r_0^2 + \frac{ \left({2}  +  2 \lambda \tau \right) B}{( \tau^2 -8\lambda \tau^2 L)}. 
\end{align*}
\end{lemma}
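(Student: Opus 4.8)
The plan is to establish the recursion in Lemma \ref{lemma:bound_rt} and simply iterate it. Write $\rho := 1 - (\lambda\tau - 8\lambda^2\tau L)$ for the contraction factor and $c := \frac{2\lambda}{\tau} + 2\lambda^2$ for the coefficient of $B_t$. Since $\lambda < \frac{1}{8L} \le \frac{1}{8L}$, we have $\lambda\tau - 8\lambda^2\tau L = \lambda\tau(1 - 8\lambda L) > 0$, and also $\lambda\tau(1-8\lambda L) < \lambda\tau \le \frac{\tau}{8L}$; one should check (using $\tau \le L$, which follows from Assumptions \ref{assump:lipschitz_cont} and \ref{assump:strongly_convex}) that $\rho \in (0,1)$, so the geometric series converges. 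Using $B_t \le B$ for all $t>0$, Lemma \ref{lemma:bound_rt} gives $r_{t+1}^2 \le \rho\, r_t^2 + c B$ for every $t$.

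Next I would unroll this inequality: by induction on $t$,
\begin{equation*}
r_{t+1}^2 \le \rho^{t+1} r_0^2 + cB\sum_{j=0}^{t}\rho^{j} \le \rho^{t+1} r_0^2 + \frac{cB}{1-\rho}.
\end{equation*}
Finally I would simplify the constant: $1 - \rho = \lambda\tau - 8\lambda^2\tau L = \lambda\tau(1 - 8\lambda L)$, so
\begin{equation*}
\frac{cB}{1-\rho} = \frac{\left(\frac{2\lambda}{\tau} + 2\lambda^2\right)B}{\lambda\tau(1-8\lambda L)} = \frac{\left(\frac{2}{\tau} + 2\lambda\right)B}{\tau(1-8\lambda L)} = \frac{(2 + 2\lambda\tau)B}{\tau^2(1-8\lambda L)} = \frac{(2 + 2\lambda\tau)B}{\tau^2 - 8\lambda\tau^2 L},
\end{equation*}
which matches the claimed bound.

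There is essentially no hard step here — the result is a routine iteration of the one-step estimate, and the only things requiring a moment of care are verifying that the contraction factor $\rho$ genuinely lies in $(0,1)$ (so that the infinite geometric sum is valid and the bound is meaningful) and carrying out the algebraic simplification of the constant term cleanly. One subtlety worth a remark: Lemma \ref{lemma:bound_rt} as stated assumes $\lambda \le \frac{1}{8L}$ and the hypothesis $B_t \le B$ is for $t > 0$, so strictly the recursion $r_{t+1}^2 \le \rho r_t^2 + cB$ is applied for $t \ge 1$, and one either folds in the $t=0$ step separately (it needs $B_0 \le B$ too, or an adjustment of indices) or notes that the stated conclusion is understood with the convention that $B$ bounds $B_t$ for all $t \ge 0$; I would phrase the iteration starting from $r_1^2$ and absorb constants so the displayed inequality holds as written.
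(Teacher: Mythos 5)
Your proposal is correct and follows exactly the route the paper intends: the paper states this lemma as an immediate consequence of Lemma \ref{lemma:bound_rt}, obtained "by applying \eqref{new_eq:0} recursively," which is precisely your unrolling of the recursion plus the geometric-series bound and the algebraic simplification of $cB/(1-\rho)$. Your additional checks (that the contraction factor lies in $(0,1)$, and the $t>0$ versus $t\ge 0$ indexing remark) are sound and only make explicit what the paper leaves implicit.
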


We achieve the following convergence rate result for the gradient descent scheme \eqref{alg:GD} using our DGS gradient. 
\begin{theorem}[Convergence on periodic noise models]
\label{thm:main}
Let $F = \phi + \epsilon$, where $\phi$ satisfies Assumptions \ref{assump:lipschitz_cont}--\ref{assump:strongly_convex} and $\epsilon$ is a real function on $\mathbb{R}^d$ whose cross sections $\eta(\cdot|\bm x, \bm \xi_i)$ of $\epsilon$ along $\bm \xi_1,\ldots, \bm \xi_d$ are periodic functions with {a} period $1/\alpha$. Assume that $\eta(\cdot|\bm x,\bm\xi_i)$ are continuously differentiable with uniformly bounded derivatives $|\eta' (y|\bm x,\bm \xi_i)|\, \le \gamma_1,\ \forall y\in \mathbb{R},\ i\in \{1,\ldots,d\}$. Let $\{\bm x_t\}_{t\ge 0}$ be generated by Algorithm \eqref{alg:GD} with $\lambda = {1}/{(16L)}$ and $\bm x^*$ be the global minimum of $\phi$. Then, for any $t\ge 0$, we have 
\begin{align}
        & \|\bm x_{t+1} - \bm x^*\|^2  \le\,  
 \left(1 - \frac{\tau}{32L}\right) ^{t+1} \|\bm x_0 - \bm x^*\|^2 +  \delta_{\sigma}     , \label{thm:strong_convex}
\end{align}
where
\begin{align}
\label{new_eq:15c}
&  \delta_{\sigma} =  
\left(\!\frac{4}{ \tau^2}  +  \frac{1}{4 L\tau} \! \right) \!\! \left( \frac{ C {\pi} (M\,!)^2 d}{4^M ((2M)\,!)^2} \sigma^{4M-2} + 48  L^2 d \sigma^2 + \frac{3 \gamma_1^2 {d}   }{2 e^{ 4\pi^2\alpha^2 \sigma^2}}   \left( {1} + \frac{1}{8\pi \alpha^2\sigma^2}\right)\right)\!.
\end{align} 
Further, $\sigma$ can be selected to suit the noise frequency and minimize $\delta_\sigma$ as follows: 
\begin{itemize}
    \item If $\alpha \! >\! \dfrac{2L\sqrt{2}}{\pi\gamma_1}\!$ (small period, 
{high}
frequency): choose $\sigma  =  \dfrac{1}{\pi\alpha\sqrt{2}}\log^{1/2}\!\!\left(\dfrac{\pi\alpha \gamma_1}{2L\sqrt{2}}\right)$ then
\begin{gather}
\label{new_eq:15}
\begin{aligned}
\delta_{\sigma} &\le  \left(\!\frac{4}{ \tau^2}  +  \frac{1}{4 L\tau}\! \right) \Bigg[ \frac{ C {\pi} (M\,!)^2 d}{4^M ((2M)\,!)^2} \sigma^{4M-2} 
\\
& \qquad \qquad + \frac{30 L^2 d}{\pi^2 \alpha^2} \left(\log\left(\dfrac{\pi\alpha \gamma_1}{2L\sqrt{2}}\right) + \log^{-1}\left(\dfrac{\pi\alpha \gamma_1}{2L\sqrt{2}}\right)  \right) \Bigg].
\end{aligned}
\end{gather}
    \item If $\alpha \le  \dfrac{2L\sqrt{2}}{\pi\gamma_1}$ (large period, 
{low}
frequency): choose $\sigma = \dfrac{1}{\alpha}$, then
\begin{gather}
\label{new_eq:15b}
\begin{aligned}
\delta_{\sigma} \le  \left(\frac{4}{ \tau^2}  +  \frac{1}{4 L\tau}\! \right) \left[ \frac{ C {\pi} (M\,!)^2 d}{4^M ((2M)\,!)^2} \sigma^{4M-2} + \frac{64 L^2 d}{\alpha^2} 
\right].
\end{aligned}
\end{gather}
\end{itemize}
\end{theorem}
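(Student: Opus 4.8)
This theorem is obtained by assembling the three preceding lemmas and then carrying out a one-parameter optimization over the smoothing radius $\sigma$. First I would apply Lemma \ref{cor:lemma:bound_rt1} with the prescribed $\lambda = 1/(16L)$, which indeed satisfies $\lambda < 1/(8L)$. A direct computation gives the contraction factor $\lambda\tau - 8\lambda^2\tau L = \tfrac{\tau}{16L} - \tfrac{\tau}{32L} = \tfrac{\tau}{32L}$, which is the rate appearing in \eqref{thm:strong_convex}, while $\tau^2 - 8\lambda\tau^2 L = \tau^2/2$ and $2 + 2\lambda\tau = 2 + \tfrac{\tau}{8L}$, so the trailing constant from Lemma \ref{cor:lemma:bound_rt1} equals $\bigl(2 + \tfrac{\tau}{8L}\bigr)B/(\tau^2/2) = \bigl(\tfrac{4}{\tau^2} + \tfrac{1}{4L\tau}\bigr)B$, matching the prefactor in \eqref{new_eq:15c}. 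It therefore remains only to extract a uniform bound $B \ge B_t$ valid for all $t$, and then to optimize in $\sigma$.

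Second, I would bound $B_t$ through Lemma \ref{Lemma:bound_Bt}: its first two contributions are already in the stated form $\tfrac{C\pi (M!)^2 d}{4^M((2M)!)^2}\sigma^{4M-2} + 48 L^2 d\sigma^2$, and for the remaining term $3\|\nabla_{\sigma,\bm\Xi}[\epsilon](\bm x_t)\|^2$ I would invoke Proposition \ref{prop:periodic} in the case $n=1$, square the resulting bound, and apply $(1+a)^2 \le 2(1+a^2)$ with $a = \tfrac{1}{2\alpha\sigma\sqrt{2\pi}}$ (so $a^2 = \tfrac{1}{8\pi\alpha^2\sigma^2}$) to get $3\|\nabla_{\sigma,\bm\Xi}[\epsilon](\bm x_t)\|^2 \le \tfrac{3\gamma_1^2 d}{2}e^{-4\pi^2\alpha^2\sigma^2}\bigl(1 + \tfrac{1}{8\pi\alpha^2\sigma^2}\bigr)$. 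Substituting this $B$ into the constant from the first step produces exactly $\delta_\sigma$ as written in \eqref{new_eq:15c}.

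Third — the substantive step — I would minimize $\delta_\sigma$ over $\sigma$. Dropping the Gauss–Hermite term (which is carried verbatim), the trade-off is between the growing term $48 L^2 d\sigma^2$ and the decaying term of order $\gamma_1^2 d\, e^{-4\pi^2\alpha^2\sigma^2}$; balancing the two forces $2\pi^2\alpha^2\sigma^2 \approx \log\bigl(\tfrac{\pi\alpha\gamma_1}{2L\sqrt2}\bigr)$, i.e. the stated choice $\sigma = \tfrac{1}{\pi\alpha\sqrt2}\log^{1/2}\!\bigl(\tfrac{\pi\alpha\gamma_1}{2L\sqrt2}\bigr)$, which is admissible exactly when the logarithm's argument exceeds $1$, that is, when $\alpha > \tfrac{2L\sqrt2}{\pi\gamma_1}$. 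With this choice one has $e^{-4\pi^2\alpha^2\sigma^2} = \bigl(\tfrac{2L\sqrt2}{\pi\alpha\gamma_1}\bigr)^2$, so the $\gamma_1^2$ factor in the noise term collapses into $L^2$, and elementary estimates (bounding $1 + \tfrac{1}{8\pi\alpha^2\sigma^2}$ and absorbing the numerical constants into $30$) give \eqref{new_eq:15}. In the complementary regime $\alpha \le \tfrac{2L\sqrt2}{\pi\gamma_1}$, where the formula above is undefined, I would instead take $\sigma = 1/\alpha$: then $e^{-4\pi^2\alpha^2\sigma^2} = e^{-4\pi^2}$ is negligible and, using $\gamma_1^2 \le \tfrac{8L^2}{\pi^2\alpha^2}$ from the case hypothesis, the noise term is dominated by a tiny multiple of $L^2 d/\alpha^2$, so that $48 L^2 d\sigma^2 + (\text{noise}) \le 64 L^2 d/\alpha^2$, yielding \eqref{new_eq:15b}.

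The first two steps are bookkeeping; the main obstacle is the third. One must pick $\sigma$ so that the polynomial-growth and exponential-decay contributions to $\delta_\sigma$ are simultaneously controlled, then verify that after substitution the bound can be re-expressed in the clean closed form stated, and in particular handle the case split so that the ``optimal'' radius degenerates gracefully to $\sigma = 1/\alpha$ when $\alpha$ is too small for the relevant logarithm to be positive. The dependence of $\delta_\sigma$ on $\sigma$ is non-monotone and the exact constants require some care, but no new analytic ingredient beyond the preceding lemmas is needed.
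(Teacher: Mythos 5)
Your proposal is correct and follows essentially the same route as the paper: invoke Lemma \ref{cor:lemma:bound_rt1} with $\lambda = 1/(16L)$ (yielding the rate $1-\tau/(32L)$ and the prefactor $4/\tau^2 + 1/(4L\tau)$), bound $B_t$ uniformly via Lemma \ref{Lemma:bound_Bt} together with Proposition \ref{prop:periodic} for $n=1$, and then choose $\sigma$ by balancing $48L^2 d\sigma^2$ against the exponentially decaying noise term, with the same case split at $\alpha = 2L\sqrt{2}/(\pi\gamma_1)$ and the same fallback $\sigma = 1/\alpha$. Your explicit use of $(1+a)^2 \le 2(1+a^2)$ to produce the $1 + \tfrac{1}{8\pi\alpha^2\sigma^2}$ factor is a detail the paper leaves implicit, but the resulting bound and all constants match.
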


\begin{proof}
Define $B_t$ as in \eqref{def:B}. From Lemma \ref{Lemma:bound_Bt} and Proposition \ref{prop:periodic}, $B_t$ can be uniformly bounded as 
\begin{gather}
\begin{aligned}
 B_t  \le\, &  \frac{ C{\pi} (M\,!)^2 d}{4^M ((2M)\,!)^2} \sigma^{4M-2} + 48  L^2 d \sigma^2 + 3 \|{\nabla}^M_{ \sigma, \bm \Xi}[\epsilon](\bm x_t)\|^2 \label{new_eq:17}
\\
\le\, & \frac{ C {\pi} (M\,!)^2 d}{4^M ((2M)\,!)^2} \sigma^{4M-2} + 48  L^2 d \sigma^2 +  \frac{3 \gamma_1^2 {d}   }{2 e^{ 4\pi^2\alpha^2 \sigma^2}}   \left( {1} + \frac{1}{8\pi \alpha^2\sigma^2}\right),\ \forall t > 0.  
\end{aligned}
\end{gather}
Let $B$ be the upper bound of $B_t$ in \eqref{new_eq:17}, from Lemma \ref{cor:lemma:bound_rt1}, we have 
\begin{align*}
\|\bm x_{t+1} - \bm x^*\|^2  & \le  (1 - (\lambda \tau -8\lambda^2 \tau L)) ^{t+1} \|\bm x_0 - \bm x^*\|^2 + \frac{ \left({2}  +  2 \lambda \tau \right) B}{( \tau^2 -8\lambda \tau^2 L)}
\\
& \le \left(1 - \frac{\tau}{32L}\right) ^{t+1} \|\bm x_0 - \bm x^*\|^2 +  \left(\frac{4}{ \tau^2}  +  \frac{1}{4 L\tau} \right) B,   
\end{align*}
proving \eqref{thm:strong_convex}. 

Now, we consider two scenarios of small and large period of the noise. If $\alpha > \dfrac{2L\sqrt{2}}{\pi\gamma_1}$, from \eqref{thm:strong_convex}, the optimal $\sigma$ is approximately the minimum of function 
$
\sigma \mapsto 48  L^2 d \sigma^2 + \dfrac{3 \gamma_1^2 {d}   }{2 e^{ 4\pi^2\alpha^2 \sigma^2}}    , 
$
which is 
$
\sigma  =  \dfrac{1}{\pi\alpha\sqrt{2}}\log^{1/2}\left(\dfrac{\pi\alpha \gamma_1}{2L\sqrt{2}}\right). 
$ 
With this $\sigma$, we have
\begin{align*}
48  L^2 d \sigma^2 \le \frac{24  L^2 d }{\pi^2\alpha^2} \log\left(\dfrac{\pi\alpha \gamma_1}{2L\sqrt{2}}\right), & 
\quad \frac{3 \gamma_1^2 {d}   }{2 e^{ 4\pi^2\alpha^2 \sigma^2}} \le \frac{3\gamma_1^2 d}{2} \cdot  \frac{8L^2}{\pi^2 \alpha^2 \gamma_1^2} =   \frac{12  L^2 d }{\pi^2\alpha^2} ,
\\
{1} + \frac{1}{8\pi \alpha^2\sigma^2} &\le 1 + \frac{\pi}{4}  \log^{-1}\left(\dfrac{\pi\alpha \gamma_1}{2L\sqrt{2}}\right),
\end{align*}
and \eqref{new_eq:15} follows directly from the definition of $\delta_\sigma$.

If $\alpha \le  \dfrac{2L\sqrt{2}}{\pi\gamma_1}$, with $\sigma = \dfrac{1}{\alpha}$,    
\begin{align*}
&48  L^2 d \sigma^2 \le \frac{48L^2 d}{\alpha^2}, \ \  \frac{3 \gamma_1^2 {d}   }{2 e^{ 4\pi^2\alpha^2 \sigma^2}} \le\frac{24L^2 d}{\pi^2 \alpha^2}, \ \ {1} + \frac{1}{8\pi \alpha^2\sigma^2} < 2, 
\end{align*}
implying \eqref{new_eq:15b}
\end{proof}

The estimate \eqref{thm:strong_convex} showed that the gradient descent scheme \eqref{alg:GD} is linearly convergent, whose rate only depends on the Lipschitz constant and the convexity parameter of $\phi$. However, as typically seen in optimization under the presence of noise, Algorithm \eqref{alg:GD} can only converge to a neighborhood of $\bm x^*$ and not exactly $\bm x^*$. The radius of this neighborhood of convergence is $\sqrt{\delta_\sigma}$ and determined by the discrepancy between $\nabla \phi$ and the search direction formed by DGS gradients (see \eqref{new_eq:15c}). There, the first term stems from the error of GH estimator and can be easily controlled by assigning enough GH quadrature nodes (usually a small number) without adjusting the smoothing radius. The latter two terms, on the other hand, reveal the need of choosing the smoothing radius appropriately for Algorithm \eqref{alg:GD}: 
with too large $\sigma$, the DGS gradient may not adequately approximate the true slope of $\phi$ at the point of interest (resulting in large second term), while with too small $\sigma$, it cannot efficiently filter out the noise (reflected in large third term). The optimal range of the smoothing radius was determined to balance these two opposite behaviors and minimize $\delta_\sigma$ (see \eqref{new_eq:15}-\eqref{new_eq:15b}). Interestingly, in both large and small period scenarios, we proved that the optimal choices of $\sigma$ are roughly $1/\alpha$, the period of the noise function. Further, with these choices, the neighborhood of convergence can be tightened to have the radius of approximately $\sqrt{d}/\alpha$, up to a log factor (equivalently, $1/\alpha$ in each direction). After reaching this neighborhood, the optimizer 
{is no longer assured to} make further progress because the actual global minimum may be concealed by spurious local minima of the noise function on the interval of one period. Without additional assumption on the behavior of the noise on this interval (e.g., diminishing around the global minimum), this barrier may not be relaxed in theory. Note that the smaller period and higher frequency the noise function has, the better we can approximate to the global minima of objective function.

The error estimate \eqref{thm:strong_convex} shares similarities with several existing results of optimization of noisy functions \cite{doi:10.1137/110830629,doi:10.1137/18M1177718,doi:10.1137/19M1240794,2022arXiv220100973S}, which featured the convergence to a neighborhood around the global minimum with size depending on the noise. DGS can be viewed as a generalization of finite difference, where we use more function evaluations on each direction to estimate the directional derivatives \textit{with noise filtered out}. From this angle, the smoothing radius $\sigma$ can be seen as the counterpart of difference interval $h$ in finite difference schemes. In \cite{Mor2012EstimatingDO,BCCS19}, it has been showed that the optimal neighborhood of convergence requires a precise specification of $h$, with too big or too small $h$ leading to either inadequate gradient approximation of the true objective function or insufficient noise removal, which is in line with \eqref{new_eq:15c}. However, our analysis has a key novelty: here, \textit{both the optimal smoothing radius and the size of the neighborhood of convergence are determined based mainly on the period and frequency of the noise, and depend very weakly on its magnitude} (via the log factor or totally independent). By contrast, in all the aforementioned works, the choice of optimal parameters and the convergence study were centered around on the noise magnitude upper bound (in the uniform or expectation sense), which is in general more pessimistic. It may be natural and advantageous to exploit the frequency information in optimization with oscillating noise. To the best of our knowledge, DGS approach is the first to offer that ability with a rigorous theory.

Next, we extend Theorem \ref{thm:main} to a more general setting, where the noise function is assumed to be high-frequency bandlimited instead of periodic. We will see that the gradient descent scheme \eqref{alg:GD} achieves similar exponential convergence rate, and the optimal smoothing radius is roughly $1/\alpha_0$, which is the maximum
wavelength of the noise and a generalization of the notion of period for this model. Unsurprisingly, the size of the neighborhood of convergence is also approximately $\sqrt{d}/\alpha_0$ with this value of $\sigma$. Thus, the smaller the maximum wavelength is, the closer Algorithm \eqref{alg:GD} can reach to the global minimum.   
%
$ $
\begin{theorem}[Convergence on high-frequency bandlimited noise models]
\label{thm:main2}
Let $F = \phi + \epsilon$, where $\phi$ satisfies Assumptions \ref{assump:lipschitz_cont}--\ref{assump:strongly_convex} and $\epsilon$ is a real function on $\mathbb{R}^d$ whose cross sections $\eta(\cdot|\bm x, \bm \xi_i)$ of $\epsilon$ along $\bm \xi_1,\ldots, \bm \xi_d$ are high-frequency bandlimited signals with power spectrum being zero on $(-\alpha_0,\alpha_0)$ and uniformly bounded by $\gamma$ on $\mathbb{R}$ (see Definition \ref{def:bandlimited}). Let $\{\bm x_t\}_{t\ge 0}$ be generated by \eqref{alg:GD} with $\lambda = {1}/{(16L)}$ and $\bm x^*$ be the global minimum of $\phi$. Then, for any $t\ge 0$, we have 
\begin{align}
        & \|\bm x_t - \bm x^*\|^2  \le\,  
 \left(1 - \frac{\tau}{32L}\right) ^{t+1} \|\bm x_0 - \bm x^*\|^2 +  \delta_{\sigma}     , \label{thm:strong_convex2}
\end{align}
\begin{align*}
 \text{with}\ \   & \delta_{\sigma} =  
\left(\frac{4}{ \tau^2}  +  \frac{1}{4 L\tau} \right) \! \left( \frac{ C{\pi} (M\,!)^2 d}{4^M ((2M)\,!)^2} \sigma^{4M-2} + 48  L^2 d \sigma^2 + \frac{ 3 \gamma^2{d}}{\pi^2\sigma^4} \exp\left(-{4\pi^2 \alpha_0^2 \sigma^2 }\right)\right).
\end{align*} 
Further, $\sigma$ can be selected to suit the noise frequency and minimize $\delta_\sigma$ as follows: 
\begin{itemize}
    \item If $\alpha_0 > L^{1/3}/(\pi \gamma^{1/3})$: choose $\sigma = \dfrac{\sqrt{3}}{\pi\alpha_0\sqrt{2}}\log^{1/2}\left(\dfrac{\pi\alpha_0 \gamma^{1/3}}{L^{1/3}}\right)$ then
\begin{gather}
\label{new_eq:24}
\begin{aligned}
\delta_{\sigma} &\le  \left(\!\frac{4}{ \tau^2}  +  \frac{1}{4 L\tau}\! \right) \Bigg[ \frac{ C {\pi} (M\,!)^2 d}{4^M ((2M)\,!)^2} \sigma^{4M-2}  
\\
& \qquad + \frac{8 L^2 d }{  \alpha_0^2} \left(\log\left(\dfrac{\pi\alpha_0 \gamma^{1/3}}{L^{1/3}}\right) +  \log^{-2}\left(\dfrac{\pi\alpha_0 \gamma^{1/3}}{L^{1/3}}\right) \right)\Bigg].
\end{aligned}
\end{gather}
    \item If $\alpha_0 \le L^{1/3}/(\pi \gamma^{1/3})$: choose $\sigma = \dfrac{1}{\alpha_0}$, then
\begin{gather}
\label{new_eq:24b}
\begin{aligned}
\delta_{\sigma} \le  \left(\frac{4}{ \tau^2}  +  \frac{1}{4 L\tau}\! \right) \left[ \frac{ C {\pi} (M\,!)^2 d}{4^M ((2M)\,!)^2} \sigma^{4M-2} +  \frac{49 L^2 d}{\alpha_0^2}
\right].
\end{aligned}
\end{gather}
\end{itemize}
\end{theorem}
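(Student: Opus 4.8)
The plan is to follow the scheme of the proof of Theorem~\ref{thm:main}, with Proposition~\ref{prop:high_freq} replacing Proposition~\ref{prop:periodic}. First I would define $B_t$ as in \eqref{def:B} and apply Lemma~\ref{Lemma:bound_Bt}, which splits $B_t$ into three pieces: the Gauss--Hermite quadrature error, the bias $3\|\nabla_{\sigma,\bm\Xi}[\phi](\bm x_t)-\nabla\phi(\bm x_t)\|^2$ of the DGS gradient against $\nabla\phi$, and the noise contribution $3\|\nabla_{\sigma,\bm\Xi}[\epsilon](\bm x_t)\|^2$. Bounding this last piece by Proposition~\ref{prop:high_freq}, squaring, and collecting terms gives, for all $t>0$,
\[
B_t \le \frac{C\pi(M!)^2 d}{4^M((2M)!)^2}\sigma^{4M-2} + 48 L^2 d\sigma^2 + \frac{3\gamma^2 d}{\pi^2\sigma^4}\exp(-4\pi^2\alpha_0^2\sigma^2),
\]
uniformly in $t$; denote the right-hand side by $B$. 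Plugging this bound and $\lambda=1/(16L)$---for which $1-(\lambda\tau-8\lambda^2\tau L)=1-\tau/(32L)$ and $(2+2\lambda\tau)/(\tau^2-8\lambda\tau^2 L)=4/\tau^2+1/(4L\tau)$---into Lemma~\ref{cor:lemma:bound_rt1} immediately yields \eqref{thm:strong_convex2} with $\delta_\sigma=(4/\tau^2+1/(4L\tau))B$.

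It then remains to pick $\sigma$ making $B$ small. The quadrature term is independent of $\sigma$ and is kept small by taking $M$ moderately large, so the real task is to approximately minimize
\[
\psi(\sigma) := 48 L^2 d\sigma^2 + \frac{3\gamma^2 d}{\pi^2\sigma^4}\exp(-4\pi^2\alpha_0^2\sigma^2).
\]
Compared with the periodic case, the new feature is the algebraic prefactor $\sigma^{-4}$ multiplying the exponential; since it is subdominant to the exponential, I would locate the near-minimizer by equating the leading-order scales $L^2\sigma^2 \sim \gamma^2\sigma^{-4}\exp(-4\pi^2\alpha_0^2\sigma^2)$ and solving after taking logarithms, which gives $\sigma=\frac{\sqrt 3}{\pi\alpha_0\sqrt 2}\log^{1/2}(\frac{\pi\alpha_0\gamma^{1/3}}{L^{1/3}})$. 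For this value one checks $4\pi^2\alpha_0^2\sigma^2=6\log(\pi\alpha_0\gamma^{1/3}/L^{1/3})$, hence $\exp(-4\pi^2\alpha_0^2\sigma^2)=L^2/(\pi^6\alpha_0^6\gamma^2)$; back-substituting, the quadratic term becomes $\frac{72 L^2 d}{\pi^2\alpha_0^2}\log(\pi\alpha_0\gamma^{1/3}/L^{1/3})$ and the exponential term becomes $\frac{4 L^2 d}{3\pi^4\alpha_0^2}\log^{-2}(\pi\alpha_0\gamma^{1/3}/L^{1/3})$, and since $72/\pi^2<8$ and $4/(3\pi^4)<8$ both are dominated by $\frac{8L^2 d}{\alpha_0^2}\big(\log(\pi\alpha_0\gamma^{1/3}/L^{1/3})+\log^{-2}(\pi\alpha_0\gamma^{1/3}/L^{1/3})\big)$, which is \eqref{new_eq:24}. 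The hypothesis $\alpha_0>L^{1/3}/(\pi\gamma^{1/3})$ is precisely what guarantees the argument of the logarithm exceeds $1$, so this $\sigma$ is well defined and positive.

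For the complementary range $\alpha_0\le L^{1/3}/(\pi\gamma^{1/3})$, where the logarithmic choice is no longer usable, I would take $\sigma=1/\alpha_0$ and estimate the terms of $B$ bluntly: $48 L^2 d\sigma^2=48 L^2 d/\alpha_0^2$, while the exponential term equals $\frac{3\gamma^2\alpha_0^4 d}{\pi^2}e^{-4\pi^2}$, which, using $\gamma\alpha_0^3\le L/\pi^3$ (equivalently $\gamma^2\alpha_0^6\le L^2/\pi^6$), is at most $\frac{3L^2 d}{\pi^8\alpha_0^2}e^{-4\pi^2}$ and hence entirely negligible against the first term; adding them produces the constant $49$ in \eqref{new_eq:24b}. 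The hard part will be the $\sigma$-optimization: unlike in the periodic case the exponential now carries an algebraic prefactor, so one has to verify carefully that the logarithmic choice of $\sigma$ makes \emph{both} the $L^2\sigma^2$ term and the exponential term of the same order $L^2 d/\alpha_0^2$ up to logarithmic factors and that the $\sigma^{-4}$ factor does not inflate any constant. Everything else---the three-way decomposition of $B_t$, the recursive application of Lemma~\ref{cor:lemma:bound_rt1}, and the crude estimates in the second case---is routine given the lemmas and propositions already established.
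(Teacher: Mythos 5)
Your proposal is correct and follows essentially the same route as the paper: decompose $B_t$ via Lemma \ref{Lemma:bound_Bt} with Proposition \ref{prop:high_freq} for the noise term, feed the uniform bound into Lemma \ref{cor:lemma:bound_rt1} with $\lambda = 1/(16L)$, and then balance $48L^2 d\sigma^2$ against $\frac{3\gamma^2 d}{\pi^2\sigma^4}e^{-4\pi^2\alpha_0^2\sigma^2}$, with your computations ($4\pi^2\alpha_0^2\sigma^2 = 6\log(\pi\alpha_0\gamma^{1/3}/L^{1/3})$, the resulting $\frac{72L^2d}{\pi^2\alpha_0^2}\log(\cdot)$ and $\frac{4L^2d}{3\pi^4\alpha_0^2}\log^{-2}(\cdot)$ terms, and the crude $\sigma = 1/\alpha_0$ estimate yielding the constant $49$) matching the paper's. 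The only slip is the claim that the quadrature term is independent of $\sigma$ (it carries a factor $\sigma^{4M-2}$), but this is immaterial since both you and the paper retain that term explicitly in the final bounds rather than including it in the minimization.
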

\begin{proof}
Estimate \eqref{thm:strong_convex2}  
{follows from}
Proposition \ref{prop:high_freq}, Lemmas \ref{Lemma:bound_Bt} and \ref{cor:lemma:bound_rt1}. To determine the optimal value of $\sigma$ such that Algorithm {\eqref{alg:GD}} approaches the true minimum as close as possible, we find the critical point of the function $\sigma \mapsto  48  L^2 d \sigma^2 + \frac{ 3 \gamma^2{d}}{\pi^2\sigma^4} \exp(-{4\pi^2 \alpha_0^2 \sigma^2 }) $, which satisfies 
\begin{align*}
48L^2 d -\frac{ 3 \gamma^2{d}}{\pi^2}  \left(\frac{2}{\sigma^6} + \frac{4\pi^2 \alpha_0^2}{\sigma^4}\right) e^{- 4\pi^2\alpha_0^2 \sigma^2} = 0, 
\end{align*}
thus can be approximated as 
\begin{align*}
48L^2 d -\frac{3 \gamma^2 {d}  \left({2} + {4\pi^2 \alpha_0^2}\right)  }{\pi^2\sigma^4 e^{4\pi^2\alpha_0^2 \sigma^2} }= 0, \ \text{ or }\ \sigma^4 e^{4\pi^2\alpha_0^2 \sigma^2} = \frac{ \gamma^2   \left({2} + {4\pi^2 \alpha_0^2}\right) }{16 \pi^2 L^2  }.
\end{align*}
Simple calculation gives    
$$
\sigma = \frac{1}{\pi\alpha_0 \sqrt{2}} \left[W_0\left(2\pi^2\alpha_0^2 \sqrt{\frac{\gamma^2 (2\pi^2\alpha_0^2 + 1)}{8 \pi^2 L^2}}\right)\right]^{1/2}, 
$$
where $W_0$ is the Lambert $W$ function. Observe {that} $W_0$ is an increasing function and $W_0(u) \simeq \log(u) - \log(\log u),\, \forall u \ge e $, \cite{Hoorfar_inequalitieson}, we consider two scenarios similarly to the periodic noise case: 
\begin{enumerate}
\item $\alpha_0 > L^{1/3}/(\pi \gamma^{1/3})$: we choose $\sigma = \dfrac{\sqrt{3}}{\pi\alpha_0\sqrt{2}}\log^{1/2}\left(\dfrac{\pi\alpha_0 \gamma^{1/3}}{L^{1/3}}\right)$. Then
\end{enumerate}
\begin{gather}
\label{new_eq:25}
\begin{aligned}
& 48  L^2 d \sigma^2  \le \frac{72 L^2 d }{ \pi^2 \alpha_0^2} \log\left(\dfrac{\pi\alpha_0 \gamma^{1/3}}{L^{1/3}}\right),  
\\  
 & \frac{ 3 \gamma^2{d}}{\pi^2\sigma^4} \exp\left(-{4\pi^2 \alpha_0^2 \sigma^2 }\right)  
 \le  \frac{ 3 \gamma^2{d}}{\pi^2} \cdot \frac{4 \pi^4 \alpha_0^4}{9} \log^{-2}\left(\dfrac{\pi\alpha_0 \gamma^{1/3}}{L^{1/3}}\right) \cdot \frac{L^2}{\gamma^2 \pi^6 \alpha_0^6}
 \\
 &\qquad\qquad\qquad\qquad\quad\ \ = \frac{4L^2 d}{3\pi^4 \alpha_0^2}  \log^{-2}\left(\dfrac{\pi\alpha_0 \gamma^{1/3}}{L^{1/3}}\right). 
\end{aligned}
\end{gather}
Summing the inequalities in \eqref{new_eq:25} together, we obtain \eqref{new_eq:24}. 

\begin{enumerate}
\setcounter{enumi}{1}
\item $\alpha_0 \le L^{1/3}/(\pi \gamma^{1/3})$: we simply choose $\sigma = \dfrac{1}{\alpha_0}$. Then 
\begin{align*}
&48  L^2 d \sigma^2 \le \frac{48 L^2 d}{\alpha_0^2}, \ \  \frac{ 3 \gamma^2{d}}{\pi^2\sigma^4} \exp\left(-{4\pi^2 \alpha_0^2 \sigma^2 }\right)   \le \gamma^2 d \alpha_0^4 \le \frac{L^2 d}{\alpha_0^2} , 
\end{align*}
\end{enumerate}
directly implying \eqref{new_eq:24b}. The proof is concluded.
%
\end{proof}


\subsection{Diminishing noise}
\label{sec:dimin_noise}
In the previous subsection, the convergence analysis for wave-like noise models has been conducted. We tightened the neighborhood of convergence and proved that its size scales linearly with the noise wavelength with an appropriate choice of smoothing radius. 
%
%
Here, under the scenario that the noise decays to $0$ as $\bm x \to \bm x^*$, we show that the linear convergence of \eqref{alg:GD} to the \textit{exact} global minimum can be guaranteed. This result can be applied independently to general noisy functions with diminishing noise. It can also serve as a guideline for functions with oscillating, high-frequency noise, for example, when the perturbation to objective function can be represented by wave-like noise globally but diminishing noise near the global minimum. {By} progressively decreasing {the} smoothing radius after the optimizer reaches the neighborhood of convergence, {one} can break that limit. 

\begin{lemma}
\label{cor:bound_Bt}
Let $F$ be a function defined on $\mathbb{R}^d$, which can be represented as $F=\phi + \epsilon$, where $\phi$ satisfies Assumption \ref{assump:lipschitz_cont}, and $\bm x^*$ is the global minimum of $\phi$. Assume $|\epsilon(\bm x)| \le \beta \|\bm x- \bm x^*\|^2,\, \forall \bm x\in \mathbb{R}^d$, then we have 
\begin{align}
    \label{new_eq:19}
    B_t \le\,  \frac{ C {\pi} (M\,!)^2 d}{4^M ((2M)\,!)^2} \sigma_t^{4M-2} + 48  L^2 d \sigma_t^2 +   {\frac{12\beta^2 d}{\pi}} \left(4 \sigma_t^2 + \dfrac{r_t^4}{\sigma_t^2} \right),\, \forall t > 0. 
\end{align}
\end{lemma}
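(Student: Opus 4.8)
The plan is to mirror the structure of Lemma \ref{Lemma:bound_Bt} exactly, since Lemma \ref{cor:bound_Bt} is nothing but the specialization of that lemma to the diminishing-noise model, with the generic term $3\|\nabla_{\sigma,\bm\Xi}[\epsilon](\bm x_t)\|^2$ replaced by the concrete bound from Proposition \ref{prop:dim_noise}. So the first step is to invoke Lemma \ref{Lemma:bound_Bt} directly (with $\sigma=\sigma_t$), which gives
\begin{align*}
B_t \le\, C \frac{\pi (M\,!)^2 d}{4^M ((2M)\,!)^2} \sigma_t^{4M-2} + 48 L^2 d \sigma_t^2 + 3 \|{\nabla}_{\sigma_t, \bm \Xi}[\epsilon](\bm x_t)\|^2,\quad \forall t>0.
\end{align*}
Here the Lipschitz hypothesis on $\phi$ is exactly what Lemma \ref{Lemma:bound_Bt} requires, so nothing extra is needed for the first two terms.

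The second step is to control the last term. Since the hypothesis $|\epsilon(\bm x)| \le \beta\|\bm x - \bm x^*\|^2 = \tilde\epsilon(\bm x)$ is precisely the assumption of Proposition \ref{prop:dim_noise}, I can apply that proposition at the point $\bm x = \bm x_t$ to get
\begin{align*}
\|{\nabla}_{\sigma_t, \bm \Xi}[\epsilon](\bm x_t)\| \le \beta\sqrt{\frac{2d}{\pi}}\left(2\sigma_t + \frac{\|\bm x_t - \bm x^*\|^2}{\sigma_t}\right) = \beta\sqrt{\frac{2d}{\pi}}\left(2\sigma_t + \frac{r_t^2}{\sigma_t}\right),
\end{align*}
using the definition $r_t = \|\bm x_t - \bm x^*\|$. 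Squaring and then using the elementary inequality $(a+b)^2 \le 2a^2 + 2b^2$ yields
\begin{align*}
\|{\nabla}_{\sigma_t, \bm \Xi}[\epsilon](\bm x_t)\|^2 \le \frac{2\beta^2 d}{\pi}\left(2\sigma_t + \frac{r_t^2}{\sigma_t}\right)^2 \le \frac{2\beta^2 d}{\pi}\cdot 2\left(4\sigma_t^2 + \frac{r_t^4}{\sigma_t^2}\right) = \frac{4\beta^2 d}{\pi}\left(4\sigma_t^2 + \frac{r_t^4}{\sigma_t^2}\right).
\end{align*}

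The final step is to multiply this by $3$ and substitute into the bound from step one, which gives the $\frac{12\beta^2 d}{\pi}\left(4\sigma_t^2 + \frac{r_t^4}{\sigma_t^2}\right)$ term claimed in \eqref{new_eq:19}, completing the proof. I do not anticipate any real obstacle here: the lemma is essentially a bookkeeping corollary combining two already-proved results. The only minor care needed is the choice of how to split $(a+b)^2$ — one should check that the paper's constant $12\beta^2 d/\pi$ (rather than, say, a tighter constant obtained without the $2a^2+2b^2$ step) is indeed what the straightforward estimate produces, and if a sharper constant is intended, use instead $(2\sigma_t + r_t^2/\sigma_t)^2 = 4\sigma_t^2 + 4r_t^2 + r_t^4/\sigma_t^2$ and absorb the cross term; but the stated form with the factor $2$ split is the cleanest route and matches \eqref{new_eq:19}.
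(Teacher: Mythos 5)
Your proposal is correct and follows exactly the route the paper takes: the paper's proof is literally the one-line statement that the lemma is a direct consequence of Proposition \ref{prop:dim_noise} and Lemma \ref{Lemma:bound_Bt}, and your computation (square the proposition's bound, apply $(a+b)^2\le 2a^2+2b^2$, multiply by $3$) is the intended bookkeeping that reproduces the constant $12\beta^2 d/\pi$.
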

\begin{proof}
This lemma is a direct consequence of Proposition \ref{prop:dim_noise} and Lemma \ref{Lemma:bound_Bt}.
\end{proof}

The main result of this subsection is as follow. 
\begin{theorem}[Convergence on quadratically diminishing noise models]
\label{thm:main3}
Let $F = \phi + \epsilon$, where $\phi$ satisfies Assumptions \ref{assump:lipschitz_cont}--\ref{assump:strongly_convex} and $\bm x^*$ is the global minimum of $\phi$. Assume $|\epsilon(\bm x)| \le \beta \|\bm x- \bm x^*\|^2,\, \forall \bm x\in \mathbb{R}^d$, with $\beta$ satisfying
\begin{align}
\label{cond:beta}
 {  \beta \sqrt{2L^2\pi + \beta^2} } < \frac{\pi }{32d}\cdot\frac{8\tau^2 L}{48 L + 3\tau}. 
\end{align}
Let $\{\bm x_t\}_{t\ge 0}$ be generated by Algorithm \eqref{alg:GD} with $M$ satisfying $\left(\dfrac{16M}{e}\right)^M\! \ge \dfrac{\pi}{48\sqrt{2}L^2}$, $\lambda = {1}/({16L})$ and
\begin{align}
 \sigma_t = \dfrac{\sqrt{\beta}}{\sqrt[4]{8L^2\pi + 4\beta^2}}   \left[(1-\frac{\tau}{32L})   + (\frac{6}{\tau L}  + \frac{3}{8L^2}) \frac{ d \beta \sqrt{2L^2\pi + \beta^2} }{\pi}\right]^{t/2}  \tilde{r}_0,\, 
 \label{new_eq:22}
\end{align}
with some $\tilde{r}_0\ge \|\bm x_0 - \bm x^*\|$. Then $\{\bm x_t\}_{t\ge 0}$ exponentially convergence to $\bm x^*$: 
\begin{align}
\label{new_eq:23}
\| \bm x_t - \bm x^*\|^2 \le  \left[(1-\frac{\tau}{32L})   + (\frac{6}{\tau L}  + \frac{3}{8L^2}) \frac{ d \beta \sqrt{2L^2\pi + \beta^2} }{\pi}\right]^{t}  \tilde{r}_0^2,\, \forall t \ge 0. 
\end{align} 
\end{theorem}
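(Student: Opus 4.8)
The plan is to run a single induction on $t$ with the hypothesis $r_t^2 \le \tilde r_t^2$, where $\tilde r_t^2 := \rho^{\,t}\,\tilde r_0^2$ and $\rho := \bigl(1-\tfrac{\tau}{32L}\bigr) + \bigl(\tfrac{6}{\tau L}+\tfrac{3}{8L^2}\bigr)\tfrac{d\beta\sqrt{2L^2\pi+\beta^2}}{\pi}$ is exactly the per-step factor in \eqref{new_eq:23}. The first thing to record is that condition \eqref{cond:beta} is equivalent to $\rho<1$: writing $\tfrac{6}{\tau L}+\tfrac{3}{8L^2}=\tfrac{48L+3\tau}{8L^2\tau}$, the inequality $\rho<1$ rearranges into $\tfrac{d\beta\sqrt{2L^2\pi+\beta^2}}{\pi}<\tfrac{\tau}{32L}\cdot\tfrac{8L^2\tau}{48L+3\tau}$, which is \eqref{cond:beta}. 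Thus, once the induction closes, \eqref{new_eq:23} is immediate. Note also that in \eqref{new_eq:22} the schedule $\sigma_t$ is tied to the \emph{deterministic} majorant $\tilde r_t$ (precisely $\sigma_t^2=\tfrac{\beta}{\sqrt{8L^2\pi+4\beta^2}}\tilde r_t^2$), not to the realized $r_t$, so the induction must carry ``$r_t\le\tilde r_t$ with $\sigma_t$ as in \eqref{new_eq:22}'' at each stage.

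The base case $t=0$ is the assumption $\tilde r_0\ge\|\bm x_0-\bm x^*\|=r_0$. For the inductive step, assume $r_t\le\tilde r_t$. Substituting $\sigma_t^2=a\tilde r_t^2$ with $a:=\beta/\sqrt{8L^2\pi+4\beta^2}$, together with $r_t^4/\sigma_t^2\le\tilde r_t^4/\sigma_t^2=\tilde r_t^2/a$, into the bound for $B_t$ from Lemma \ref{cor:bound_Bt} (whose noise term comes from Proposition \ref{prop:dim_noise}), one computes that the non-quadrature part $48L^2d\,\sigma_t^2+\tfrac{12\beta^2 d}{\pi}\bigl(4\sigma_t^2+r_t^4/\sigma_t^2\bigr)$ equals $\tfrac{48\beta d\sqrt{2L^2\pi+\beta^2}}{\pi}\tilde r_t^2-\tfrac{24\beta d L^2}{\sqrt{2L^2\pi+\beta^2}}\tilde r_t^2$ after using $\pi L^2+\beta^2\le 2L^2\pi+\beta^2$ to collapse the terms; the constant $a$ is chosen precisely so that this identity holds and so that a negative slack term is left over. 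That slack absorbs the Gauss--Hermite contribution $\tfrac{C\pi(M!)^2 d}{4^M((2M)!)^2}\sigma_t^{4M-2}$: via the Stirling-type estimate $\tfrac{(M!)^2}{4^M((2M)!)^2}\le(e/16M)^M$ the hypothesis $(16M/e)^M\ge\tfrac{\pi}{48\sqrt2 L^2}$ forces the quadrature constant to be $\lesssim L^2$, which is exactly what the slack budget can swallow. Altogether this yields $B_t\le\tfrac{48\beta d\sqrt{2L^2\pi+\beta^2}}{\pi}\tilde r_t^2$.

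Finally, feed this into the one-step recursion of Lemma \ref{lemma:bound_rt}. With $\lambda=1/(16L)$ one has $\lambda\tau-8\lambda^2\tau L=\tau/(32L)$ and $\tfrac{2\lambda}{\tau}+2\lambda^2=\tfrac{1}{8L\tau}+\tfrac{1}{128L^2}$, so
\begin{align*}
r_{t+1}^2 &\le \Bigl(1-\tfrac{\tau}{32L}\Bigr) r_t^2 + \Bigl(\tfrac{1}{8L\tau}+\tfrac{1}{128L^2}\Bigr)B_t \\
&\le \Bigl(1-\tfrac{\tau}{32L}\Bigr)\tilde r_t^2 + \Bigl(\tfrac{6}{\tau L}+\tfrac{3}{8L^2}\Bigr)\tfrac{\beta d\sqrt{2L^2\pi+\beta^2}}{\pi}\,\tilde r_t^2 = \rho\,\tilde r_t^2 = \tilde r_{t+1}^2,
\end{align*}
using $\bigl(\tfrac{1}{8L\tau}+\tfrac{1}{128L^2}\bigr)\cdot 48=\tfrac{6}{\tau L}+\tfrac{3}{8L^2}$; this closes the induction and, since $\rho<1$, gives the claimed exponential rate.

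I expect the main obstacle to be the middle step: forcing the three terms of $B_t$ to aggregate into \emph{exactly} the coefficient $\tfrac{48\beta d\sqrt{2L^2\pi+\beta^2}}{\pi}$ requires the precise (deliberately slightly-suboptimal) proportionality constant between $\sigma_t^2$ and $\tilde r_t^2$, and the bookkeeping must be tight enough to leave just enough room for the Gauss--Hermite error — which is the role of the unusual lower bound on $M$. A secondary, more conceptual point is the self-referential nature of \eqref{new_eq:22}: $\sigma_t$ must be pre-scheduled against the majorant $\tilde r_t$ rather than the unknown $r_t$, so the invariant has to be stated as a joint statement about $r_t$ and $\sigma_t$ at each step.
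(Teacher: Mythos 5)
Your proposal is correct and follows essentially the same route as the paper: the same one-step recursion from Lemma \ref{lemma:bound_rt}, the same noise bound from Lemma \ref{cor:bound_Bt}, the same majorant sequence $\tilde r_t^2=\rho^t\tilde r_0^2$ with $\sigma_t^2$ tied to $\tilde r_t^2$ via $a=\beta/\sqrt{8L^2\pi+4\beta^2}$, and the same verification that \eqref{cond:beta} is exactly $\rho<1$. Your ``negative slack'' of $\tfrac{24\beta dL^2}{\sqrt{2L^2\pi+\beta^2}}\tilde r_t^2$ equals $48L^2d\,\sigma_t^2$, so absorbing the Gauss--Hermite term into it is arithmetically identical to the paper's step of doubling that coefficient to $96L^2d$ before balancing.
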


\begin{proof}
For any sequence $\{\sigma_t\}_{t\ge 0}$ of the smoothing radii in Algorithm \eqref{alg:GD}, from Lemmas \ref{lemma:bound_rt} and \ref{cor:bound_Bt}, there holds
\begin{gather}
\begin{aligned}
  r_{t+1}^2  &{\le} \,      
 (1 - (\lambda \tau -8\lambda^2 \tau L)) r_{t}^2
 +  \left(\frac{2\lambda}{\tau}  +  2 \lambda^2 \right) B_t. 
\\
 & = (1-\frac{\tau}{32L}) r_t^2 + (\frac{1}{8\tau L} + \frac{1}{128L^2})
 \\
&\qquad\qquad \cdot\left( \frac{ C{\pi} (M\,!)^2 d}{4^M ((2M)\,!)^2} \sigma_t^{4M-2} + 48  L^2 d \sigma_t^2 +   {\frac{12\beta^2 d}{\pi}} \left(4 \sigma_t^2 + \dfrac{r_t^4}{\sigma_t^2} \right)\right) 
 \\
& \le   (1-\frac{\tau}{32L}) r_t^2  + (\frac{1}{8\tau L}  + \frac{1}{128L^2})
 \left((96 L^2 d +   {\frac{48\beta^2 d}{\pi}})  \sigma_t^2 +   {\frac{12\beta^2 d}{\pi}} \cdot \dfrac{r_t^4}{\sigma_t^2} \right). 
 \end{aligned}
 \label{new_eq:20}
 \end{gather}
Note that in the last inequality, we also used the estimate $\dfrac{ {\pi} (M\,!)^2 d}{4^M ((2M)\,!)^2} \le 48L^2d$, derived from the Stirling's formula $M! \simeq \sqrt{2\pi M} ({M}/{e})^M$, \cite[Section 6.1.38]{Handbook}, and the condition on $M$. Let $\{\tilde{r}_t\}_{t\ge 0}$ be a sequence satisfying $r_0 \le \tilde{r}_0$ and defined recursively as
 \begin{align*}
 \tilde{r}_{t+1}^2  & :=   (1-\frac{\tau}{32L}) \tilde{r}_t^2  + (\frac{1}{8\tau L}  + \frac{1}{128L^2}) \left((96 L^2 d +   {\frac{48\beta^2 d}{\pi}})  \sigma_t^2 +   {\frac{12\beta^2 d}{\pi}} \cdot \dfrac{\tilde{r}_t^4}{\sigma_t^2} \right). 
 \end{align*}
 It is easy to see $r_t \le \tilde{r}_t,\, \forall t>0$. Choosing $\sigma_t$ such that
 $
 \sigma^2_t = \dfrac{\beta}{\sqrt{8L^2\pi + 4\beta^2}} \cdot \tilde{r}_t^2,
 $
 we will prove 
 \begin{align}
 \tilde{r}^2_t = \left[(1-\frac{\tau}{32L})   + (\frac{6}{\tau L}  + \frac{3}{8L^2}) \frac{ d \beta \sqrt{2L^2\pi + \beta^2} }{\pi}\right]^{t}  \tilde{r}_0^2,\, \forall t\ge 0. 
 \label{new_eq:21}
 \end{align}
 First, \eqref{new_eq:21} is trivial for $t=0$. Assume \eqref{new_eq:21} holds for some $t\ge 0$, we have from the definition of $\tilde{r}_{t+1}$ and $\sigma_t$ 
  \begin{align*}
 \tilde{r}_{t+1}^2  &= (1-\frac{\tau}{32L}) \tilde{r}_t^2  + (\frac{1}{8\tau L}  + \frac{1}{128L^2}) \frac{48 d \beta \sqrt{2L^2\pi + \beta^2} }{\pi}  \tilde{r}_t^2
 \\
  &=    \left[(1-\frac{\tau}{32L})   + (\frac{6}{\tau L}  + \frac{3}{8L^2}) \frac{ d \beta \sqrt{2L^2\pi + \beta^2} }{\pi}\right]  \tilde{r}_t^2 
  \\
  &=     \left[(1-\frac{\tau}{32L})   + (\frac{6}{\tau L}  + \frac{3}{8L^2}) \frac{ d \beta \sqrt{2L^2\pi + \beta^2} }{\pi}\right]^{t+1}  \tilde{r}_0^2, 
 \end{align*}
{where the last equation follows from an inductive process.}
 Since $r_t \le \tilde{r}_t$, this implies the estimate \eqref{new_eq:23} of $\|\bm x_t - \bm x^*\|$. Under condition \eqref{cond:beta}, we have $\|\bm x_t - \bm x^*\|$ decays exponentially as $t\to \infty$. The adaptive rule of $\sigma_t$ can be drawn directly from the relation between $\sigma_t$ and $\tilde{r}_t$. 
 \end{proof}

 We conclude this section by noting that the condition \eqref{cond:beta} can be simplified as 
 $
 ({\beta}/{L}) \sqrt{2\pi + ({\beta}/{L})^2} < \frac{\pi}{204 d}, 
 $
 thus $\beta \lesssim {L}/{\sqrt{d}}$. 

\section{Numerical experiments}
\label{sec:experiment}
We perform several numerical experiments to illustrate and confirm our theoretical results, including: 
\begin{itemize}[leftmargin=10pt]
\item The linear convergence of Algorithm \eqref{alg:GD} to a neighborhood of convergence,
\item The correlation between the optimal range of smoothing radius and the period or maximum wavelength of the noise,
\item The convergence to {the} exact global minimum by exponentially {decreasing} the smoothing radius in case of a diminishing noise.
\end{itemize}
For this purpose, we consider simple test problems where we seek to minimize a smooth, unimodal function $\phi$ from its noisy estimate 
$$
F(\bm x) = \phi (\bm x)+\epsilon (\bm x),
$$
with $\epsilon$ being an oscillating function that falls into one class of noise models defined in Section \ref{sec:statement}. More extensive evaluations on {the practical performances of the DGS method on}  high-dimensional benchmark functions and numerical comparisons of DGS method with other baselines can be found in \cite{DBLP:conf/uai/ZhangTLZ21, AdaDGS_20}.

In the first test, we consider the $5$D synthetic functions
\begin{align}
\label{exp:periodic}
\phi (\bm x) = \left(\sum_{i=1}^{5} | x_i^{2 + i}| \right)^{1/2}\,\ \text{ and } \ \epsilon(\bm x) = \sum_{i=1}^{5} \sin(2\pi \alpha x_i),
\end{align}
defined on $[-20,20]^5$. Here, $\epsilon$ is a wave-like periodic function on each standard orthonormal direction with period $1/\alpha$. We perform the test for $\alpha = 1,\, 1/2,\, 1/4$, so the periods of the noise functions are $1,\, 2,\, 4$ correspondingly. Algorithm \eqref{alg:GD} is run with $\lambda = 0.001$ and different values of $\sigma$, ranging from $\sigma = 0.01*(1/\alpha)$ to $50*(1/\alpha)$. For each configuration, we perform 20 trials, each of which has a random initial state, and report the results averaged over those trials. 
Maximum $50000$ iterations are allowed for each run. Figure \ref{fig:noise_period2} evaluates the quality of DGS gradients, measured by their cosine similarity to $\nabla \phi$ along the optimization path. In all cases, we observe that both small and big $\sigma$ lead to a degradation of the accuracy of the DGS gradients, and the optimal range of $\sigma$ is near $1/\alpha$, the period of the noise function. Figure \ref{fig:noise_period1} shows the evolution of the distance between the approximated solution and global minimum $\|\bm x - \bm x^*\|$. We see that Algorithm \eqref{alg:GD} enjoys an exponential convergence rate before stagnating at some spurious local minimum. Also, the accuracy of DGS gradients is a reliable indicator for the convergence performance of \eqref{alg:GD}. In all three cases, starting from the smallest considered value of $\sigma$, the 
neighborhood of convergence shrinks when we increase the smoothing radius. It {achieves} the minimal size when $\sigma$ is $1/\alpha$, after which the trend is reversed. These results are consistent with Theorem \ref{thm:main}, with an exception that the size of the neighborhood of convergence is generally smaller than those guaranteed in the theory, {since the latter is based on the worst-case scenario}.

\begin{figure}[h]
\vspace{-.1cm}
\includegraphics[width = 1.5in]{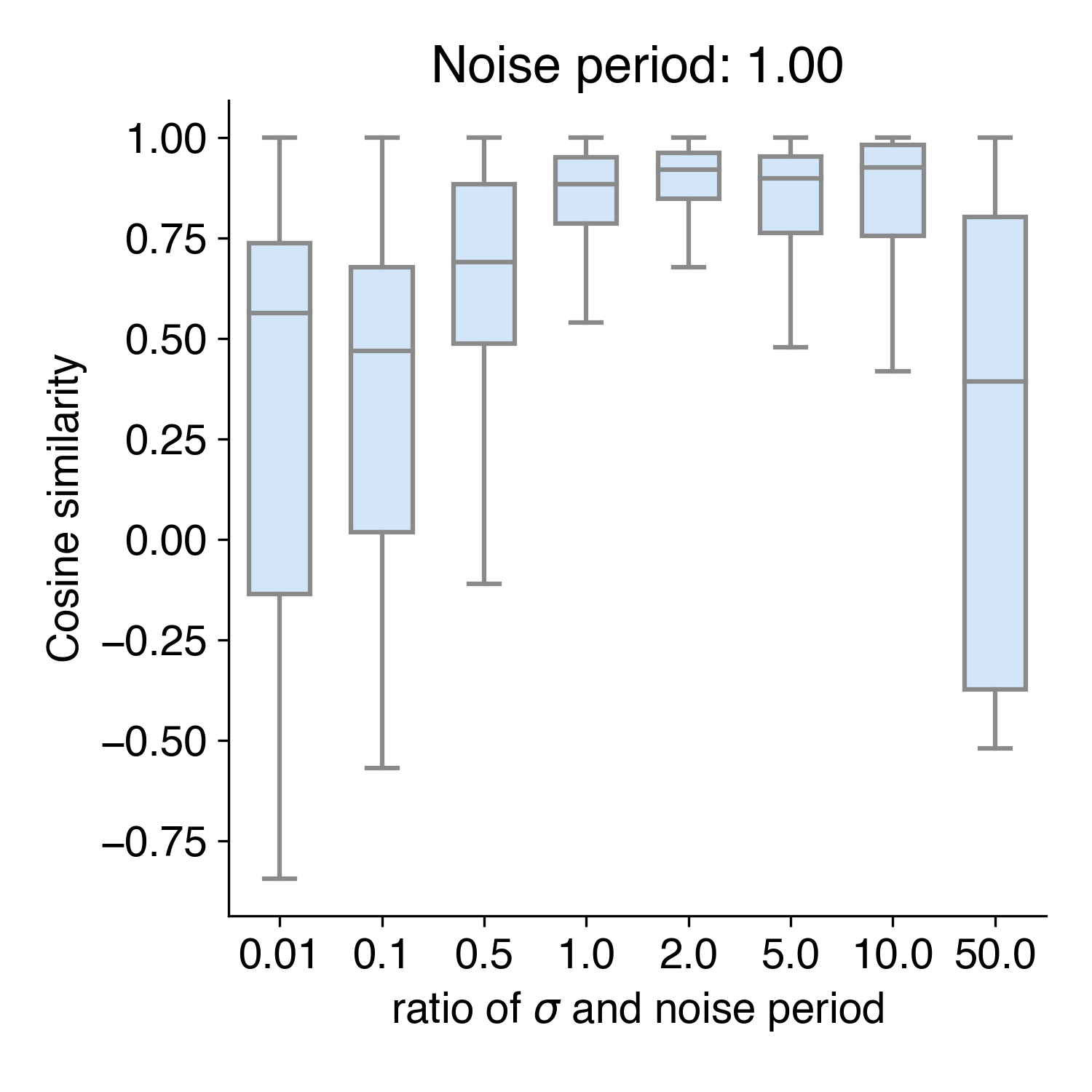}
\includegraphics[width = 1.5in ]{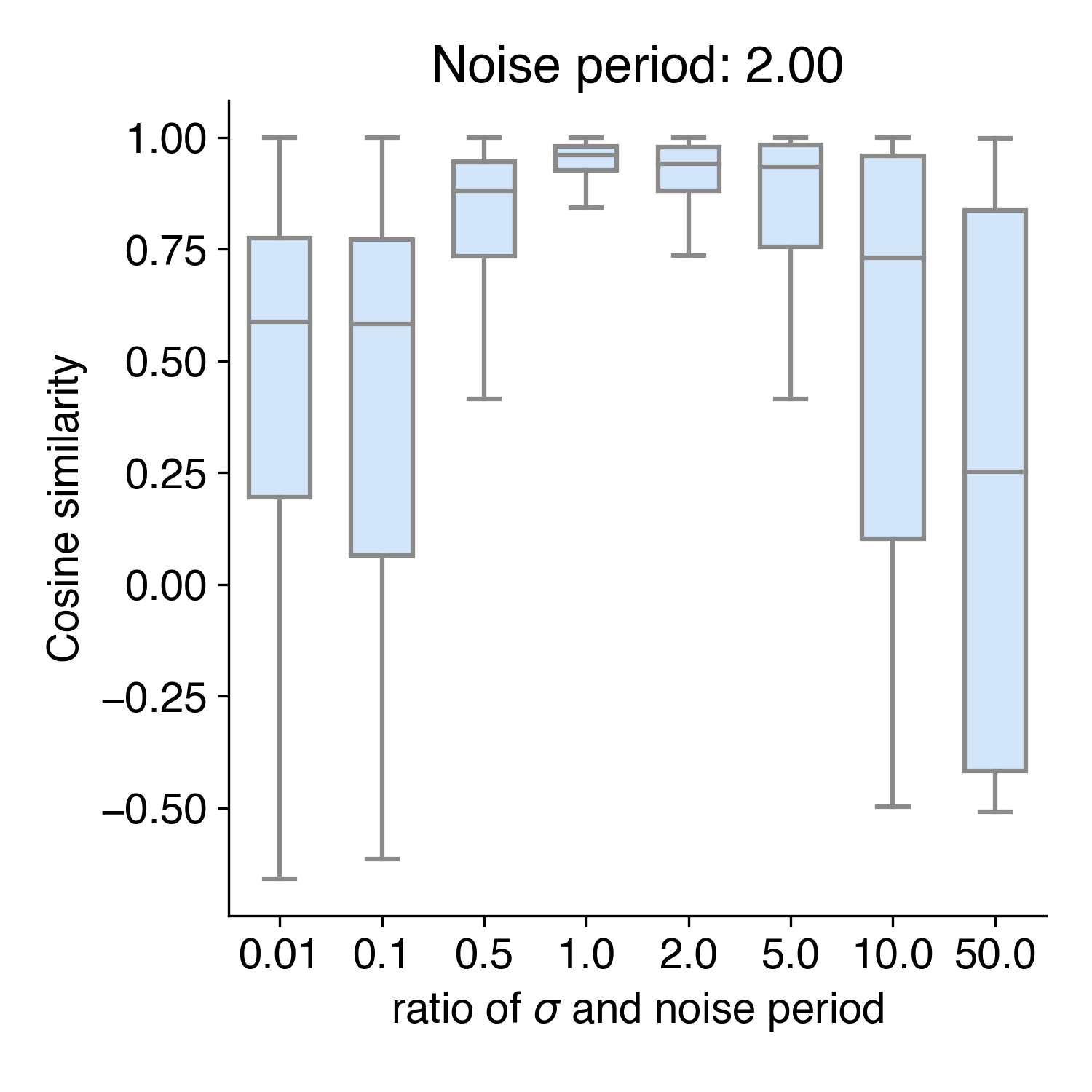}
\includegraphics[width = 1.5in ]{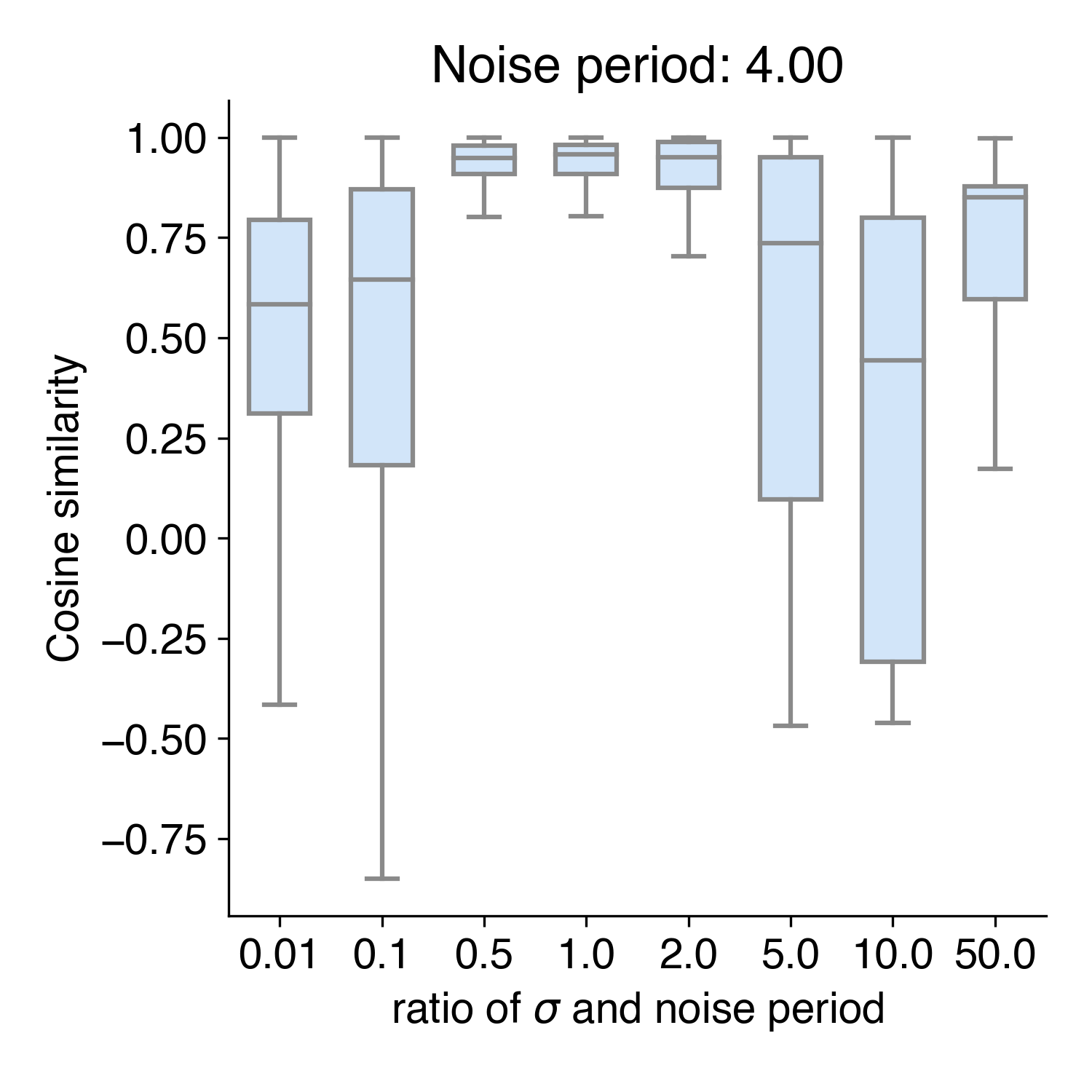}
\vspace{-.1in}
\caption{The cosine similarity between DGS gradients and $\nabla \phi$ along the optimization path in minimizing $\phi$ with the periodic noise defined in \eqref{exp:periodic}. From left to right, the noise periods are $1$, $2$ and $4$ respectively. For each case, we run Algorithm \eqref{alg:GD} with multiple different values of smoothing radius.}
\label{fig:noise_period2}
\vspace{-.2in}
\end{figure}

\begin{figure}[h]
\vspace{-.1cm}
\includegraphics[width = 1.5in]{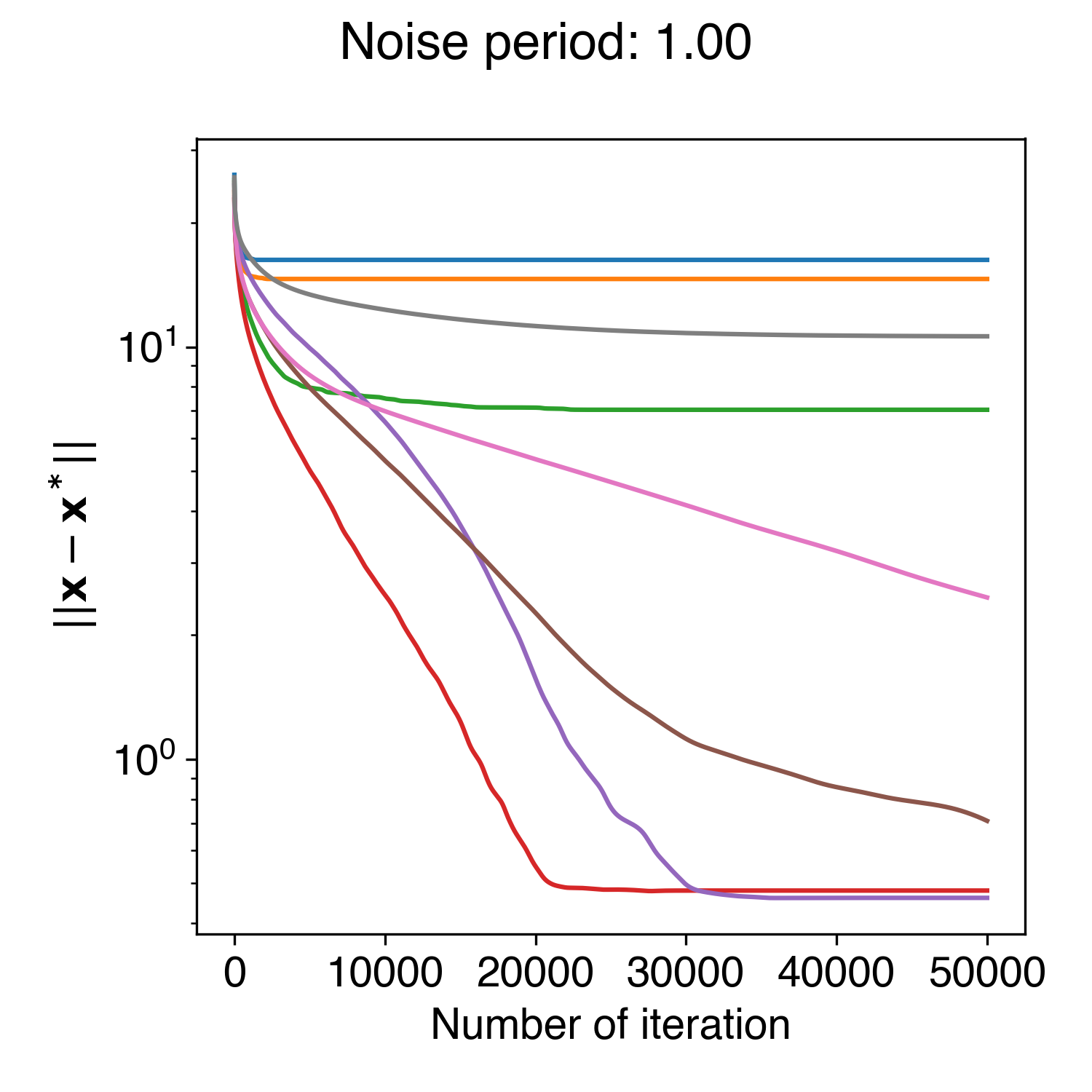}
\includegraphics[width = 1.5in ]{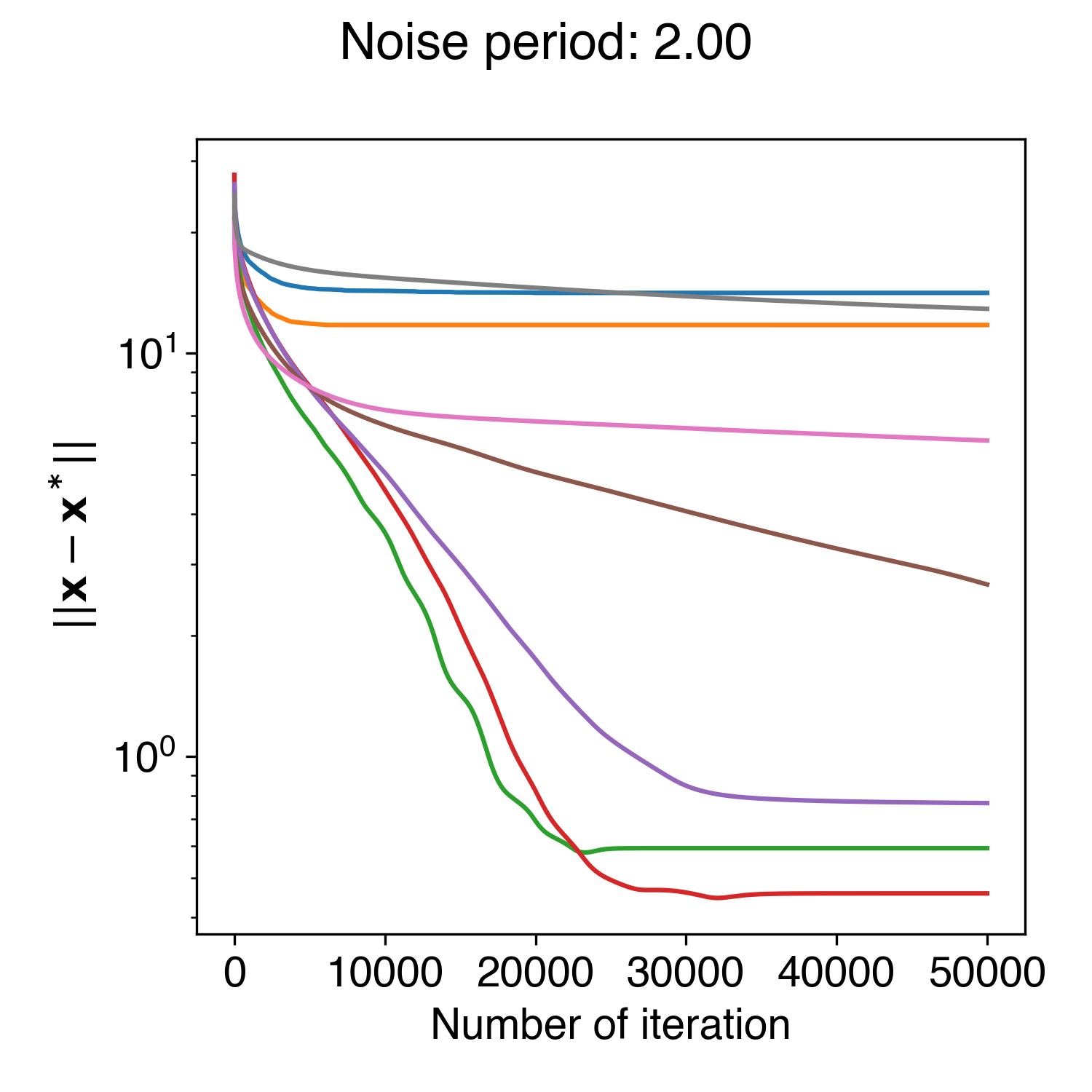}
\includegraphics[width = 1.9in ]{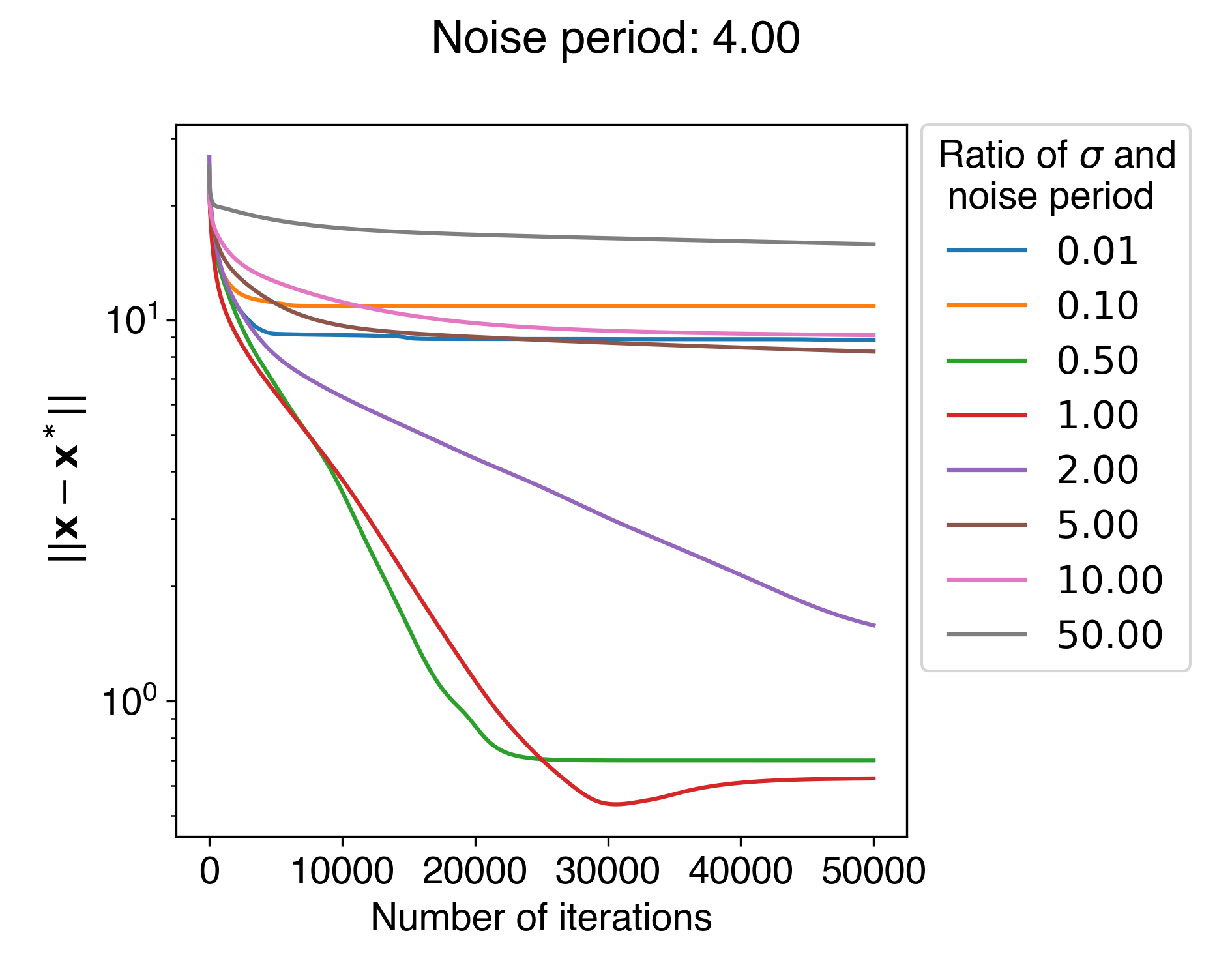}
\vspace{-.1in}
\caption{The convergence of Algorithm \eqref{alg:GD} in minimizing $\phi$ defined in \eqref{exp:periodic}. It can be seen that the optimal value of smoothing radius is equal to the period of the noise. }
\label{fig:noise_period1}
\vspace{-.2in}
\end{figure}

In the second test, to represent the high-frequency bandlimited noise model, we consider the synthetic functions
\begin{align}
\label{exp:BL}
\phi (\bm x) = \left(\sum_{i=1}^{5}  | x_i^{2 + i}| \right)^{1/2}\,\ \text{ and } \ \epsilon(\bm x) = \frac{1}{20}\sum_{i=1}^{5} \sum_{j=1}^{20} \sin(2\pi \alpha_{i,j} x_i),
\end{align}
on $[-20,20]^5$, where $\{\alpha_{i,j}\}$ is defined such that $1/\alpha_{i,j}$ is uniformly sampled from $[0,1/\alpha_0]$. Thus, on each standard orthonormal direction, $\epsilon$ is an aggregation of $20$ periodic functions, with periods uniformly distributed on $[0,1/\alpha_0]$. With this setup, the maximum wavelength of $\epsilon$ is approximately $1/\alpha_0$. We perform the test for $\alpha_0 = 1,\, 1/2,\, 1/4$. 
Similar to previous test, Algorithm \eqref{alg:GD} is run with $\lambda = 0.001$ and different values of $\sigma$, ranging from $\sigma = 0.01*(1/\alpha_0)$ to $50*(1/\alpha_0)$. For each configuration, we perform 20 trials with different random initial state, and
report the results averaged over those trials. Maximum $70000$ iterations are allowed
for each run. Figure \ref{fig:noise_period4} below evaluates the quality of DGS gradients, measured by their cosine similarity to $\nabla \phi$ along the optimization path. We observe that the DGS gradient most accurately approximates $\nabla \phi$ with $\sigma$ being around $1/\alpha_0$. In Figure \ref{fig:noise_period3}, we see the exponential convergence of Algorithm \eqref{alg:GD} to some neighborhood of the global minimum. The size of this neighborhood is minimal when $\sigma$ is near $1/\alpha_0$, and increases when the smoothing radius moves to either left or right of this optimal point. These results are consistent with Theorem \ref{thm:main2}.

\begin{figure}[h]
\vspace{-.1cm}
\includegraphics[width = 1.5in]{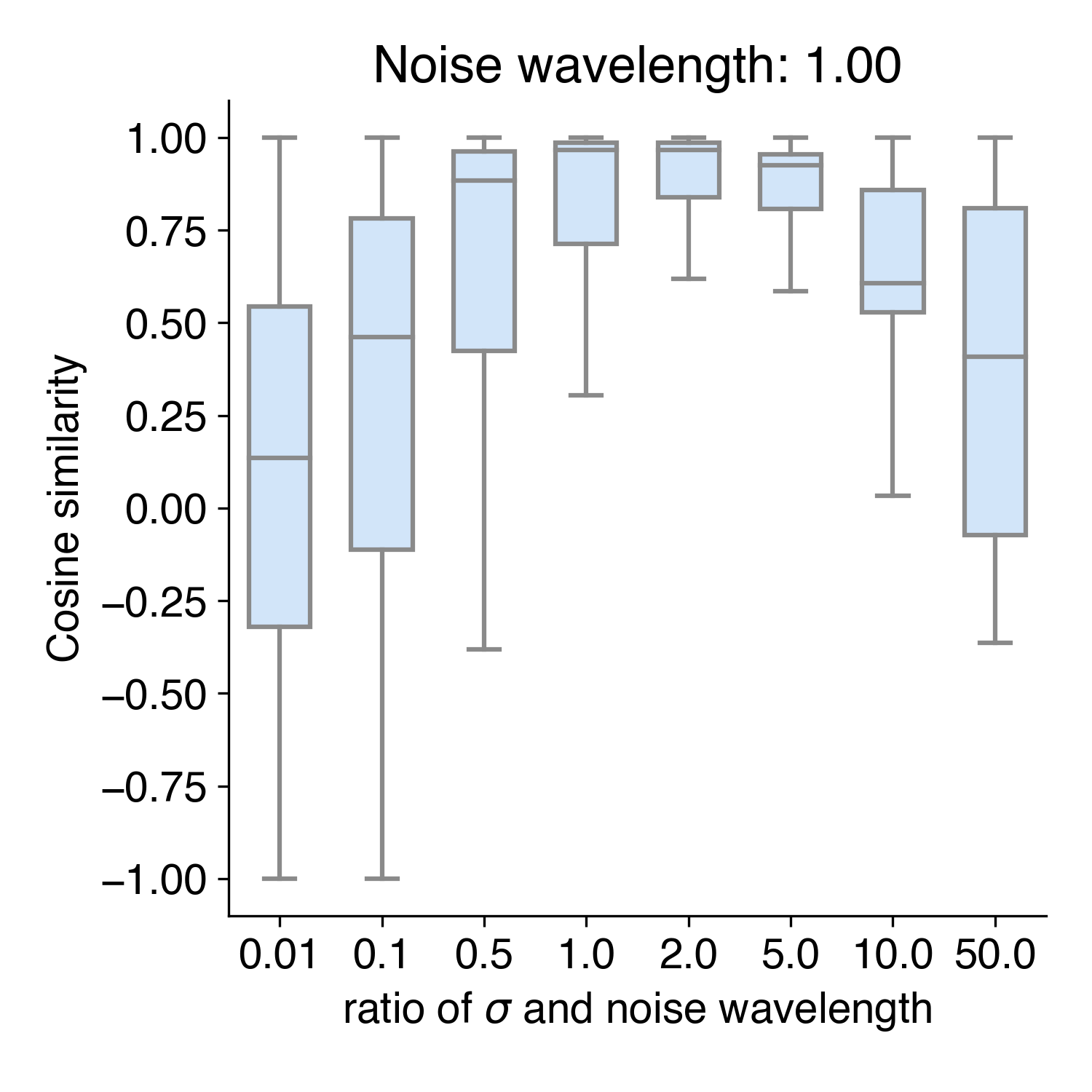}
\includegraphics[width = 1.5in ]{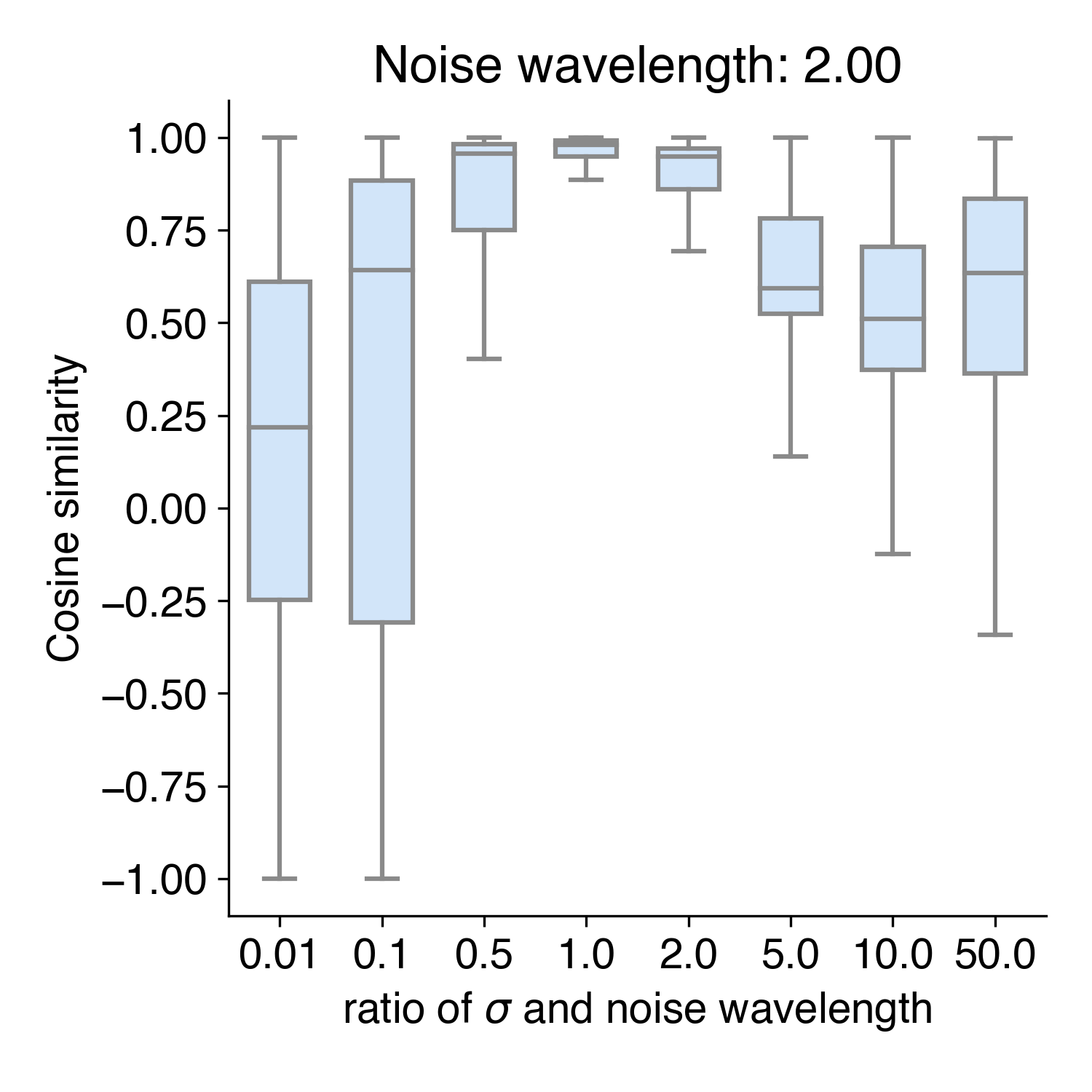}
\includegraphics[width = 1.5in ]{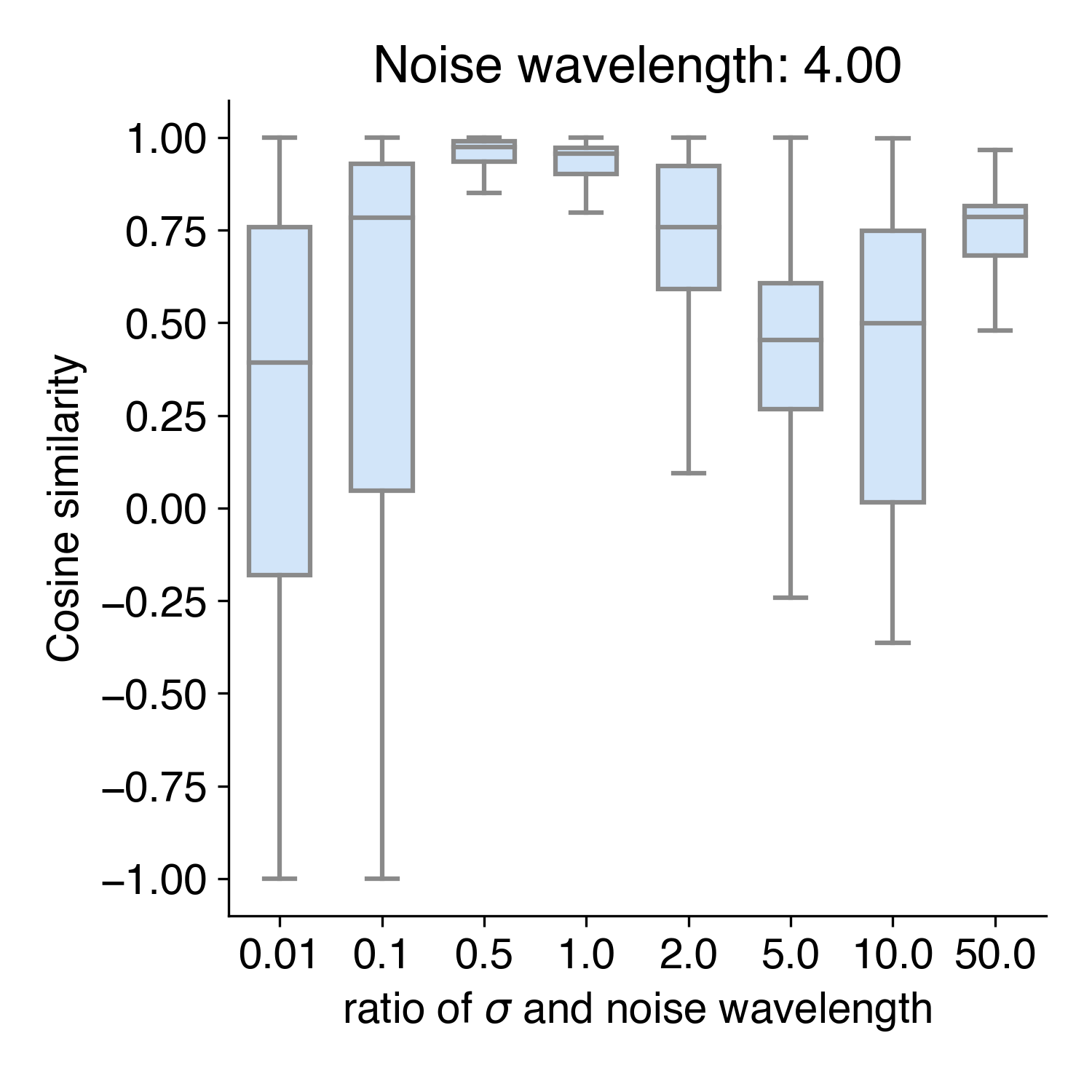}
\vspace{-.1in}
\caption{The cosine similarity between DGS gradients and $\nabla \phi$ along the optimization path in minimizing $\phi$ with the high-frequency bandlimited noise defined in \eqref{exp:BL}.}
\label{fig:noise_period4}
\vspace{-.2in}
\end{figure}

\begin{figure}[h]
\vspace{-.1cm}
\includegraphics[width = 1.5in]{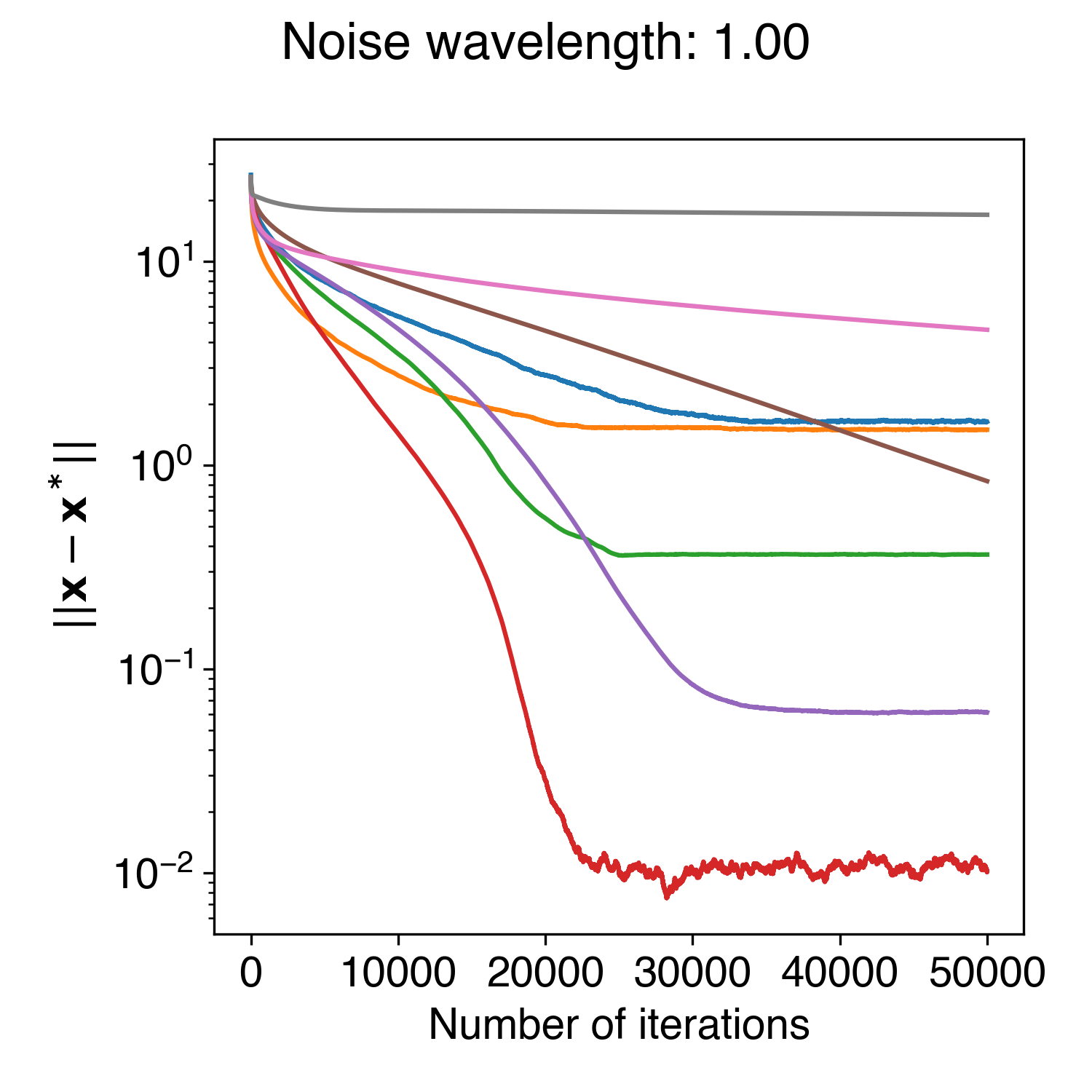}
\includegraphics[width = 1.5in ]{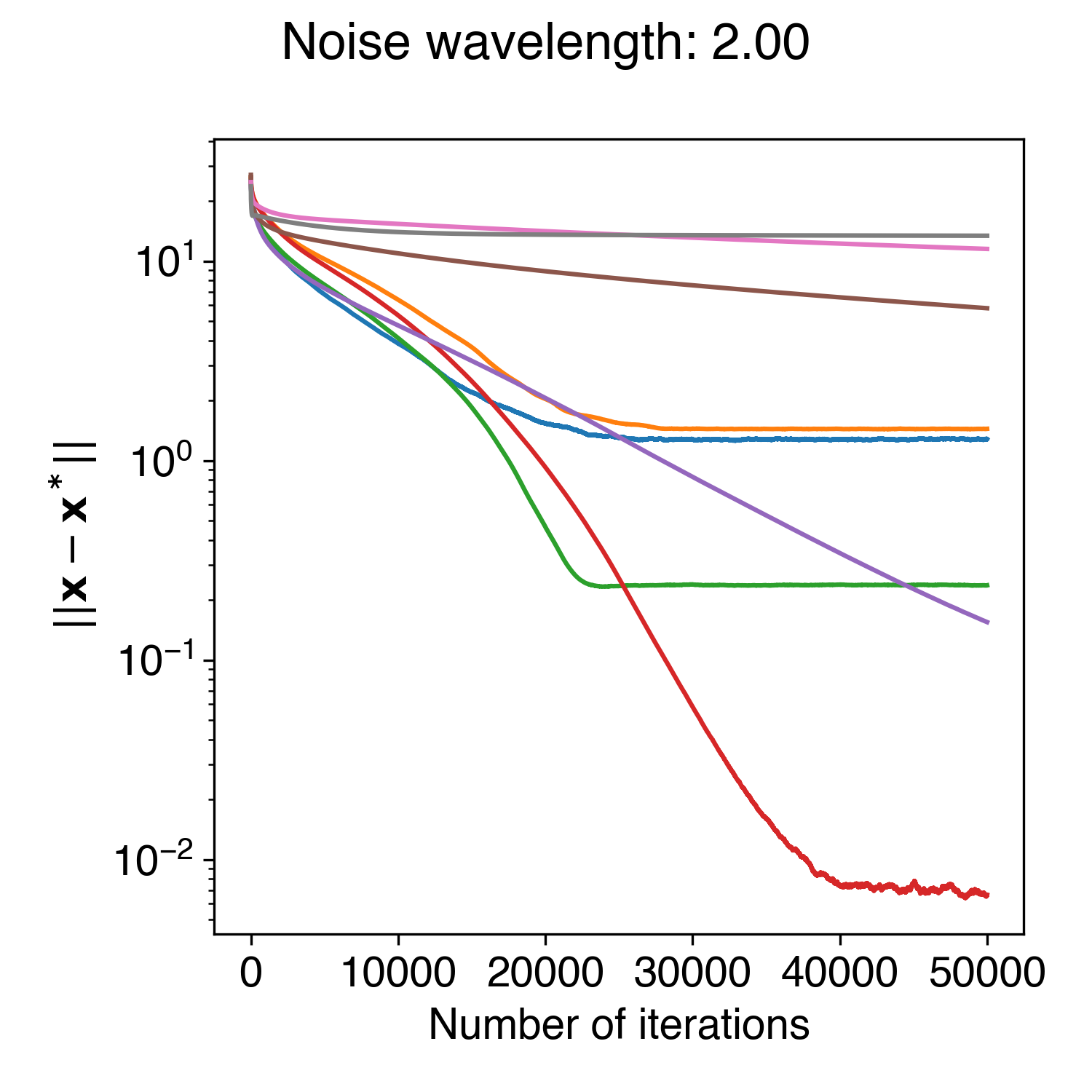}
\includegraphics[width = 1.9in ]{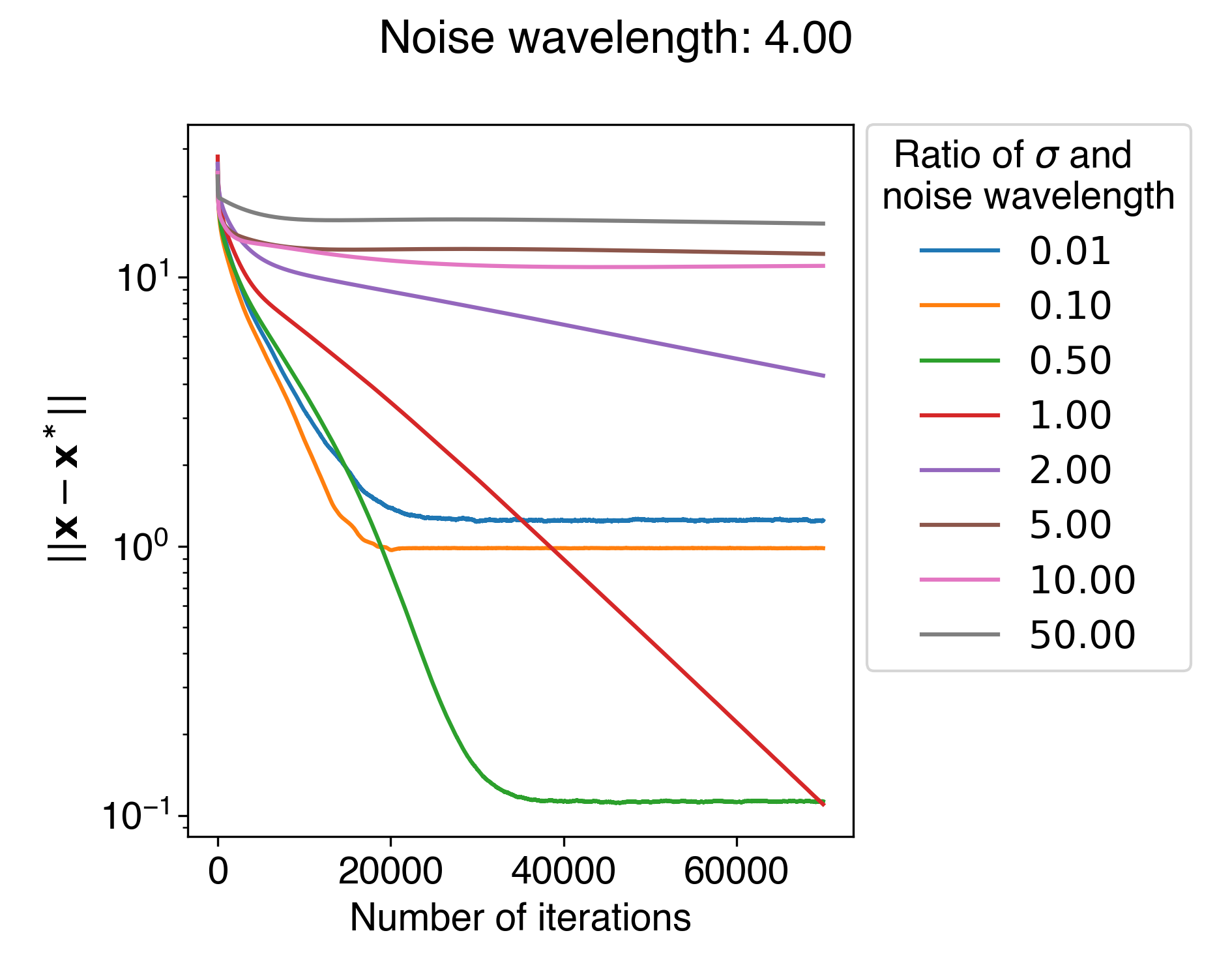}
\vspace{-.1in}
\caption{The convergence of Algorithm \eqref{alg:GD} in minimizing $\phi$ defined in \eqref{exp:BL}. It can be seen that the optimal value of smoothing radius is around to the maximum wavelength of the noise.}
\label{fig:noise_period3}
\vspace{-.2in}
\end{figure}

In the third test, to illustrate that Algorithm \eqref{alg:GD} can converge to the exact global minimum in case of a diminishing noise, we consider the following functions 
\begin{align}
\label{exp:ND}
\phi (\bm x) = \sum_{i=1}^{5} x_i^{2 } \,\ \text{ and } \ \epsilon(\bm x) = \sum_{i=1}^{5} x_i^2 \sin(2\pi  x_i), 
\end{align}
defined on $[-5,5]^5$. Here, $\epsilon$ satisfies $|\epsilon(\bm x)| \le \|\bm x - \bm x^*\|^2$, and the wavelength of $\epsilon$ is $1$. We perform Algorithm \eqref{alg:GD} with $\lambda = 0.005$ and allow maximum $20000$ iterations for each run. To test the effectiveness of the exponential decreasing updating rule for $\sigma$, we consider a two-phase update schedule: first, $\sigma$ will be fixed at a base value $\sigma_0$ for the first $5000$ iterations; then, $\sigma$ is reduced after each iteration with a contraction rate $0.999$. Different base values of $\sigma$ are tested, ranging from $\sigma_0 = 0.01*(\text{wavelength})$ to $50*(\text{wavelength})$. For
each configuration, we perform 20 trials with random initial state and report the average results. Figure \ref{fig:noise_period5} presents the convergence results of Algorithm \eqref{alg:GD}. Observe that regardless of the base value $\sigma_0$, the algorithm becomes stagnating after a few hundred iterations. After $5000$ iterations when the exponential decaying rule is applied, the optimizer is able to make progress again and we see a significant deviation in its performance with respect to $\sigma_0$. For very small $\sigma_0$, there is no change in the second phase, because reducing the already too small smoothing radius is not helpful. For very big $\sigma_0$, we observe the linear convergence to the global minimum, but it is delayed, seemingly until the smoothing radius is reduced to an appropriate range. The optimal value of $\sigma_0$ that features the fastest convergence, soon after the dynamic update rule is activated, are those near the wavelength value. This experiment does not only confirm the exponential convergence of algorithm \eqref{alg:GD} in case of diminishing noise, but also emphasizes the strong correlation between optimal smoothing radius and the noise wavelength.   

\begin{figure}[h]
\vspace{-.1cm}
\includegraphics[width = 2.in]{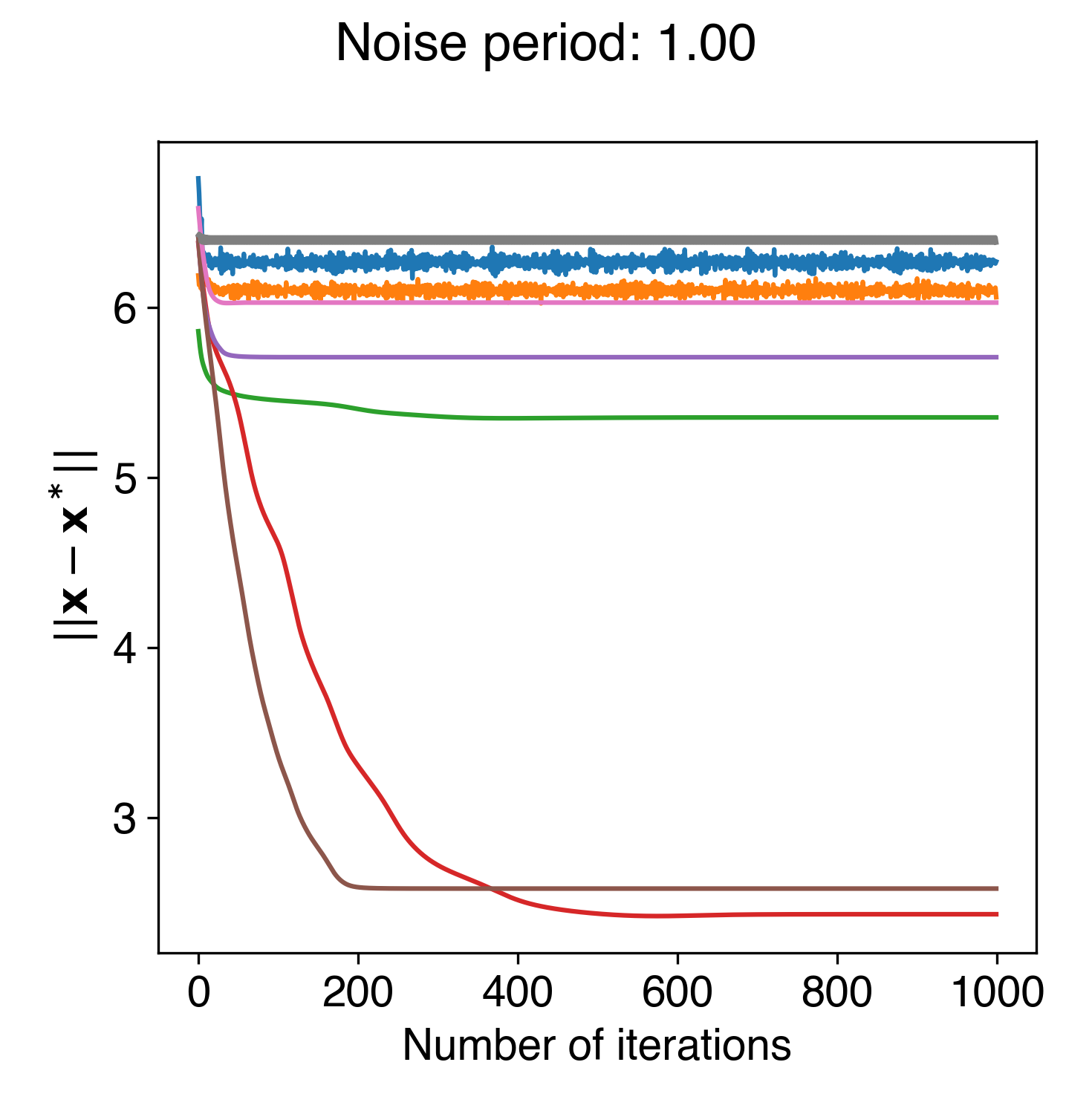}
\includegraphics[width = 2.7in ]{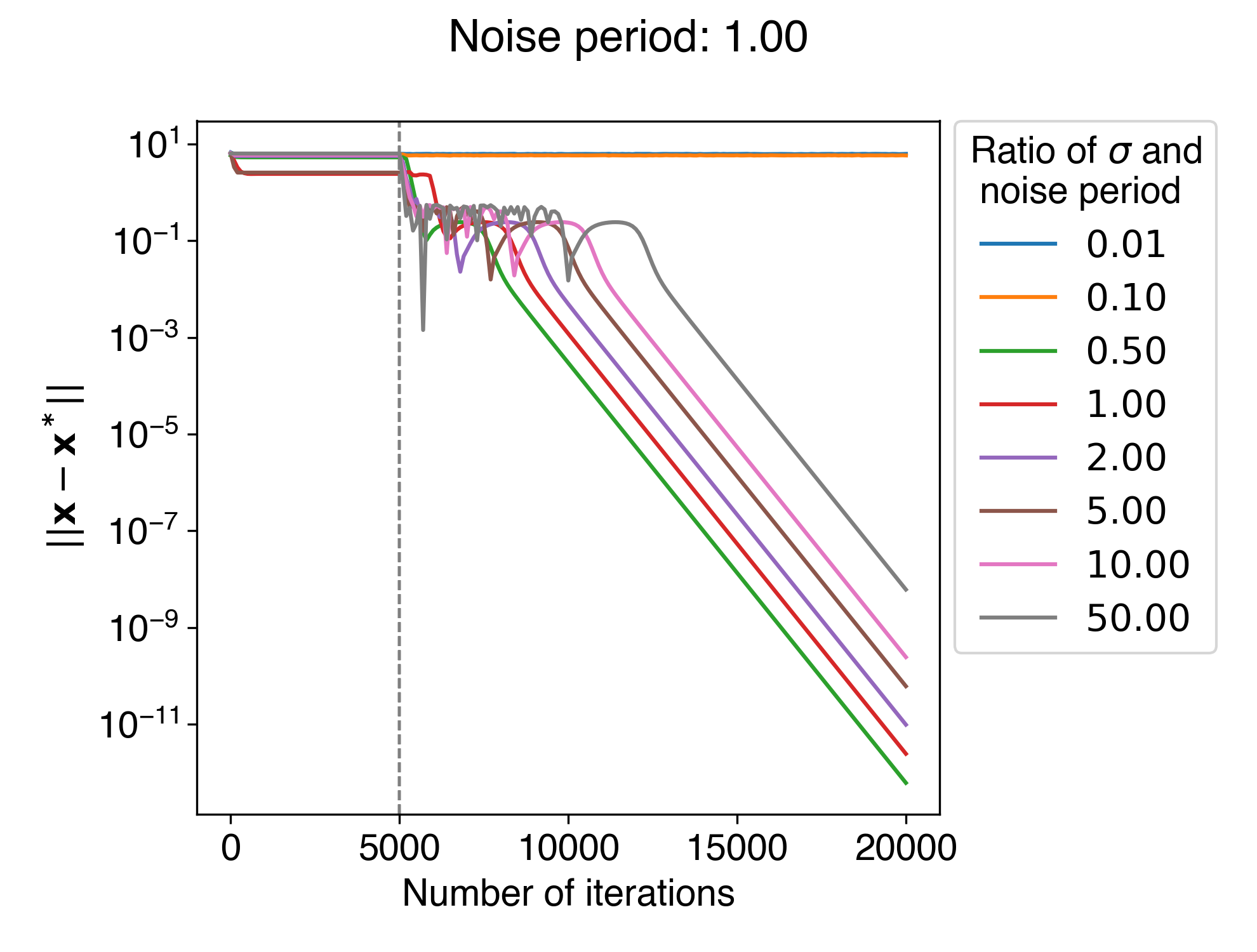}
\vspace{-.1in}
\caption{The convergence of Algorithm \eqref{alg:GD} in minimizing $\phi$ defined in \eqref{exp:ND} with the diminishing noise. {Left:} the algorithm becomes
stagnating after a few hundred iterations when fixed smoothing radius is used. Right: the linear convergence of \eqref{alg:GD} after the smoothing radius is exponentially decreased.}
\label{fig:noise_period5}
\vspace{-.2in}
\end{figure}

\section{Conclusion}
\label{sec:conclusion}

In this paper, we present a convergence theory for a gradient descent scheme that uses the DGS gradient to guide the nonlocal search on noisy, multimodal landscapes. Under the scenario that the objective function is composed of a convex function, perturbed by a highly oscillating, deterministic noise, we prove the linear convergence of the method to neighborhood of the solution, which can be significantly tightened with an appropriate choice of smoothing radius. Unlike existing theories on optimization under the presence of noise, which were based on the noise magnitude, our analysis takes advantage the frequency/wavelength information of the noise and shows a strong correlation between the optimal values of smoothing radius, the size of the neighborhood of convergence and the noise wavelength. We believe that these results are just cursory and more can be drawn from studying the noisy functions from this aspect.   

As our main interest in this paper is mathematical theory, we only consider the DGS algorithm in its simplest form, and skip many features that are important for the efficiency and practicality of the approach. For a more advanced version, which incorporates line search and self-tuning schedule for smoothing radius, we refer the interested reader to \cite{AdaDGS_20}. However, there are several questions arising from the present theory that can be beneficial to the algorithmic developments. First, an efficient method to estimate the frequency or wavelength (either local or global) of oscillating functions will be greatly useful. As we have seen, the optimal smoothing radius is informed by the wavelength. Second, the objective function $\phi$ is assumed here to be strongly convex. Under this scenario, gradient descent scheme is sufficient to guarantee the linear convergence. Weaker assumptions on $\phi$ can be considered, and with that, more involved gradient-based schemes for the optimization. Third, our analysis is based on the noise models that are characterized directionally. While our high-frequency bandlimited model can represent a wide class of fluctuating noise, bridging these and with more general noise models which are characterized globally is an important question that we want to study next. Finally, the DGS gradient is less computational demanding but potentially more noisy than the standard GS gradient in high dimension. We note that in practice, one can extend the idea further and develop a more flexible strategy which decompose the space into the direct sum of $k$ subspaces ($1\leq k \leq d$) and define a {\it{decomposition-based}} GS via independent GS within each subspace.
The DGS gradient corresponds to the special case of $k=d$ while the conventional GS can be recovered in the case $k=1$. Such decomposition can be designed a priori, or adaptively during the iterations. One can also introduce multilevel and/or hierarchical ideas to such a decomposition.  
These and other extensions will be studied separately in the future works.  

\section*{Acknowledgement}
This work was supported by the U.S. Department of Energy, Office of Science, Office of Advanced Scientific Computing Research, Applied Mathematics program, under the contract ERKJ387, and accomplished at Oak Ridge National Laboratory (ORNL). ORNL is operated by UT-Battelle, LLC., for the U.S. Department of Energy under Contract DE-AC05-00OR22725. The research of Qiang Du is supported in part by DE-SC0022317 and NSF DMS 2012562.


\begin{thebibliography}{10}

\bibitem{Handbook}
{\sc M.~Abramowitz and I.~Stegun}, eds., {\em Handbook of Mathematical
  Functions}, Dover, New York, 1972.

\bibitem{doi:10.1137/130915984}
{\sc A.~S. Bandeira, K.~Scheinberg, and L.~N. Vicente}, {\em Convergence of
  trust-region methods based on probabilistic models}, SIAM Journal on
  Optimization, 24 (2014), pp.~1238--1264.

\bibitem{doi:10.1137/18M1177718}
{\sc A.~S. Berahas, R.~H. Byrd, and J.~Nocedal}, {\em Derivative-free
  optimization of noisy functions via quasi-newton methods}, SIAM Journal on
  Optimization, 29 (2019), pp.~965--993.

\bibitem{BCCS19}
{\sc A.~S. Berahas, L.~Cao, K.~Choromanski, and K.~Scheinberg}, {\em A
  theoretical and empirical comparison of gradient approximations in
  derivative-free optimization}, Found Comput Math, 22 (2022), pp.~507--560.

\bibitem{doi:10.1137/17M1154679}
{\sc R.~Bollapragada, R.~Byrd, and J.~Nocedal}, {\em Adaptive sampling
  strategies for stochastic optimization}, SIAM Journal on Optimization, 28
  (2018), pp.~3312--3343.

\bibitem{CartisScheinberg_MathProgram}
{\sc C.~Cartis and K.~Scheinberg}, {\em Global convergence rate analysis of
  unconstrained optimization methods based on probabilistic models},
  Mathematical Programming, 169 (2018), pp.~337--375.

\bibitem{doi:10.1137/S1052623499354096}
{\sc T.~D. Choi and C.~T. Kelley}, {\em Superlinear convergence and implicit
  filtering}, SIAM Journal on Optimization, 10 (2000), pp.~1149--1162.

\bibitem{CRSTW18}
{\sc K.~Choromanski, M.~Rowland, V.~Sindhwani, R.~E. Turner, and A.~Weller},
  {\em Structured evolution with compact architectures for scalable policy
  optimization}, International Conference on Machine Learning,  (2018),
  pp.~969--977.

\bibitem{Choromanski_ES-Active}
{\sc K.~M. Choromanski, A.~Pacchiano, J.~Parker-Holder, Y.~Tang, and
  V.~Sindhwani}, {\em From complexity to simplicity: Adaptive es-active
  subspaces for blackbox optimization}, in Advances in Neural Information
  Processing Systems 32, Curran Associates, Inc., 2019, pp.~10299--10309.

\bibitem{9194022}
{\sc F.~E. Curtis and K.~Scheinberg}, {\em Adaptive stochastic optimization: A
  framework for analyzing stochastic optimization algorithms}, IEEE Signal
  Processing Magazine, 37 (2020), pp.~32--42.

\bibitem{10.1007/978-3-319-99259-4_24}
{\sc S.~J. Daniels, A.~A.~M. Rahat, R.~M. Everson, G.~R. Tabor, and J.~E.
  Fieldsend}, {\em A suite of computationally expensive shape optimisation
  problems using computational fluid dynamics}, in Parallel Problem Solving
  from Nature -- PPSN XV, A.~Auger, C.~M. Fonseca, N.~Louren{\c{c}}o,
  P.~Machado, L.~Paquete, and D.~Whitley, eds., Cham, 2018, Springer
  International Publishing, pp.~296--307.

\bibitem{doi:10.1137/1.9781611975628}
{\sc Q.~Du}, {\em Nonlocal Modeling, Analysis, and Computation}, SIAM, 2019.

\bibitem{doi:10.1142/S0218202512500546}
{\sc Q.~Du, M.~Gunzburger, R.~B. Lehoucq, and K.~Zhou}, {\em A nonlocal vector
  calculus, nonlocal volume-constrained problems, and nonlocal balance laws},
  Mathematical Models and Methods in Applied Sciences, 23 (2013), pp.~493--540.

\bibitem{DBLP:journals/tit/DuchiJWW15}
{\sc J.~C. Duchi, M.~I. Jordan, M.~J. Wainwright, and A.~Wibisono}, {\em
  Optimal rates for zero-order convex optimization: The power of two function
  evaluations}, {IEEE} Trans. Inf. Theory, 61 (2015), pp.~2788--2806.

\bibitem{FKM05}
{\sc A.~D. Flaxman, A.~T. Kalai, and H.~B. McMahan}, {\em Online convex
  optimization in the bandit setting: gradient descent without a gradient},
  Proceedings of the 16th Annual ACM-SIAM symposium on Discrete Algorithms,
  (2005), pp.~385--394.

\bibitem{doi:10.1137/110830629}
{\sc M.~P. Friedlander and M.~Schmidt}, {\em Hybrid deterministic-stochastic
  methods for data fitting}, SIAM Journal on Scientific Computing, 34 (2012),
  pp.~A1380--A1405.

\bibitem{doi:10.1137/120880811}
{\sc S.~Ghadimi and G.~Lan}, {\em Stochastic first- and zeroth-order methods
  for nonconvex stochastic programming}, SIAM Journal on Optimization, 23
  (2013), pp.~2341--2368.

\bibitem{Hoorfar_inequalitieson}
{\sc A.~Hoorfar and M.~Hassani}, {\em Inequalities on the lambert w function
  and hyperpower function}, J. Inequal. Pure and Appl. Math, 9 (2008).

\bibitem{Houthooft18}
{\sc R.~Houthooft, Y.~Chen, P.~Isola, B.~Stadie, F.~Wolski, O.~J. Ho, and
  P.~Abbeel}, {\em Evolved policy gradients}, Advances in Neural Information
  Processing Systems,  (2018), pp.~5400--5409.

\bibitem{doi:10.1137/1.9781611971903}
{\sc C.~T. Kelley}, {\em Implicit Filtering}, Society for Industrial and
  Applied Mathematics, 2011.

\bibitem{Larson_et_al_19}
{\sc J.~Larson, M.~Menickelly, and S.~M. Wild}, {\em Derivative-free
  optimization methods}, Acta Numerica, 28 (2019), pp.~287--404.

\bibitem{MWDS18}
{\sc A.~Maggiar, A.~W{a}chter, I.~S. Dolinskaya, and J.~Staum}, {\em A
  derivative-free trust-region algorithm for the optimization of functions
  smoothed via gaussian convolution using adaptive multiple importance
  sampling}, SIAM Journal on Optimization, 28 (2018), pp.~1478--1507.

\bibitem{Maheswaranathan_GuidedES}
{\sc N.~Maheswaranathan, L.~Metz, G.~Tucker, D.~Choi, and J.~Sohl-Dickstein},
  {\em Guided evolutionary strategies: Augmenting random search with surrogate
  gradients}, Proceedings of the 36th International Conference on Machine
  Learning,  (2019).

\bibitem{Mania2018SimpleRS}
{\sc H.~Mania, A.~Guy, and B.~Recht}, {\em Simple random search of static
  linear policies is competitive for reinforcement learning}, in NeurIPS, 2018.

\bibitem{Meier_OPTRL_2019}
{\sc F.~Meier, A.~Mujika, M.~M. Gauy, and A.~Steger}, {\em Improving gradient
  estimation in evolutionary strategies with past descent directions},
  Optimization Foundations for Reinforcement Learning Workshop at NeurIPS 2019,
   (2019).

\bibitem{Molesky-nano}
{\sc S.~Molesky, Z.~Lin, A.~Y. Piggott, W.~Jin, J.~Vucković, and A.~W.
  Rodriguez}, {\em Inverse design in nanophotonics}, Nature Photonics, 12
  (2018), pp.~659--670.

\bibitem{Mor2011EstimatingCN}
{\sc J.~J. Mor{\'e} and S.~M. Wild}, {\em Estimating computational noise}, SIAM
  J. Scientific Computing, 33 (2011), pp.~1292--1314.

\bibitem{Mor2012EstimatingDO}
\leavevmode\vrule height 2pt depth -1.6pt width 23pt, {\em Estimating
  derivatives of noisy simulations}, ACM Trans. Math. Softw., 38 (2012),
  pp.~19:1--19:21.

\bibitem{Nesterov_book_2004}
{\sc Y.~Nesterov}, {\em Introductory Lectures on Convex Optimization}, Springer
  US, 2004.

\bibitem{NesterovSpokoiny15}
{\sc Y.~Nesterov and V.~Spokoiny}, {\em Random gradient-free minimization of
  convex functions}, Foundations of Computational Mathematics, 17 (2017),
  pp.~527--566.

\bibitem{Radaideh-NimB}
{\sc M.~I. Radaideh, H.~Tran, L.~Lin, H.~Jiang, D.~Winder, S.~Gorti, G.~Zhang,
  J.~Mach, and S.~Cousineau}, {\em Model calibration of the liquid mercury
  spallation target using evolutionary neural networks and sparse polynomial
  expansions}, Nuclear Inst. and Methods in Physics Research, B, to appear,
  (2022).

\bibitem{Real_ICML17}
{\sc E.~Real, S.~Moore, A.~Selle, S.~Saxena, Y.~L. Suematsu, J.~Tan, Q.~V. Le,
  and A.~Kurakin}, {\em Large-scale evolution of image classifiers},
  International Conference on Machine Learning (ICML),  (2017), pp.~2902--2911.

\bibitem{SHCS17}
{\sc T.~Salimans, J.~Ho, X.~Chen, and I.~Sutskever}, {\em Evolution strategies
  as a scalable alternative to reinforcement learning}, arXiv preprint
  arXiv:1703.03864,  (2017).

\bibitem{Sener2020Learning}
{\sc O.~Sener and V.~Koltun}, {\em Learning to guide random search}, in
  International Conference on Learning Representations, 2020.

\bibitem{10.5555/3122009.3153008}
{\sc O.~Shamir}, {\em An optimal algorithm for bandit and zero-order convex
  optimization with two-point feedback}, J. Mach. Learn. Res., 18 (2017),
  p.~1703–1713.

\bibitem{doi:10.1137/20M1373190}
{\sc H.-J.~M. Shi, Y.~Xie, R.~Byrd, and J.~Nocedal}, {\em A noise-tolerant
  quasi-newton algorithm for unconstrained optimization}, SIAM Journal on
  Optimization, 32 (2022), pp.~29--55.

\bibitem{doi:10.1080/10556788.2022.2121832}
{\sc H.-J.~M. Shi, M.~Q. Xuan, F.~Oztoprak, and J.~Nocedal}, {\em On the
  numerical performance of finite-difference-based methods for derivative-free
  optimization}, Optimization Methods and Software, 0 (2022), pp.~1--23.

\bibitem{2022arXiv220100973S}
{\sc S.~{Sun} and J.~{Nocedal}}, {\em {A Trust Region Method for the
  Optimization of Noisy Functions}}, arXiv e-prints,  (2022),
  p.~arXiv:2201.00973.

\bibitem{AdaDGS_20}
{\sc H.~Tran and G.~Zhang}, {\em {AdaDGS}: An adaptive black-box optimization
  method with a nonlocal directional gaussian smoothing gradient},
  arXiv:2011.02009v1,  (2020).

\bibitem{10.1145/3205455.3205517}
{\sc V.~Volz, J.~Schrum, J.~Liu, S.~M. Lucas, A.~Smith, and S.~Risi}, {\em
  Evolving mario levels in the latent space of a deep convolutional generative
  adversarial network}, in Proceedings of the Genetic and Evolutionary
  Computation Conference, GECCO '18, New York, NY, USA, 2018, Association for
  Computing Machinery, p.~221–228.

\bibitem{doi:10.1137/19M1240794}
{\sc Y.~Xie, R.~H. Byrd, and J.~Nocedal}, {\em Analysis of the bfgs method with
  errors}, SIAM Journal on Optimization, 30 (2020), pp.~182--209.

\bibitem{DBLP:conf/uai/ZhangTLZ21}
{\sc J.~Zhang, H.~Tran, D.~Lu, and G.~Zhang}, {\em Enabling long-range
  exploration in minimization of multimodal functions}, in Proceedings of the
  Thirty-Seventh Conference on Uncertainty in Artificial Intelligence, {UAI}
  2021, Virtual Event, 27-30 July 2021, C.~P. de~Campos, M.~H. Maathuis, and
  E.~Quaeghebeur, eds., vol.~161 of Proceedings of Machine Learning Research,
  {AUAI} Press, 2021, pp.~1639--1649.

\end{thebibliography}

\end{document}